\documentclass[12pt]{article}

\usepackage{amsfonts,amsmath,amsthm,amssymb,amscd}
\usepackage[dvips]{graphicx}
\usepackage{mathrsfs}

\theoremstyle{definition}
\newtheorem{theorem}{Theorem}
\newtheorem{lemma}[theorem]{Lemma}
\newtheorem{proposition}[theorem]{Proposition}

\newtheorem{definition}[theorem]{Definition}
\numberwithin{equation}{section}
\numberwithin{theorem}{section}

\newcommand{\Ker}{\operatorname{Ker}}
\newcommand{\sign}{\operatorname{sign}}

\begin{document}

\begin{center}
{\bf{\Large Analogue of the theta group $\Gamma_{\theta}$ }}
\end{center}

\begin{center}
By Kazuhide Matsuda
\end{center}

\begin{center}
Faculty of Fundamental Science, \\
National Institute of Technology (KOSEN), Niihama College,\\
7-1 Yagumo-chou, Niihama, Ehime, Japan, 792-8580. \\
E-mail: ka.matsuda@niihama-nct.ac.jp  \\
Fax: 81-0897-37-7809 
\end{center}

\noindent
{\bf Abstract}
In this paper, we introduce higher level versions of the theta group $\Gamma_{\theta}.$ 
In particular, we treat level 3 and 4 versions of the theta group, $\Gamma_{\theta,3}$ and $\Gamma_{\theta,4}$  
and 
prove that 
$\displaystyle F(\tau)=\eta \left(\frac{\tau-1}{3} \right) \eta\left(\frac{\tau+1}{3} \right)$ 
and 
$\displaystyle G(\tau)=\eta \left(\frac{\tau-1}{4} \right) \eta\left(\frac{\tau+1}{4} \right)$ 
are modular forms on $\Gamma_{\theta,3}$ and $\Gamma_{\theta,4}$ respectively. 
Moreover 
we compute their multiplier systems, $\nu_{F}$ and $\nu_{G}$.  
\newline
{\bf Key Words:} eta function; theta constant; multiplier system
\newline
{\bf MSC(2010)}  14K25;  11E25

\section{Introduction}
\label{intro}
Throughout this paper, 
we adopt the definitions and the notations of modular groups and modular forms by Knopp \cite{Knopp}. 
We define 
\begin{equation*}
\Gamma(1)=SL(2,\mathbb{Z}) \,\,
\mathrm{and} \,\,
\Gamma_{\theta}=
\left\{
\begin{pmatrix}
a & b \\
c & d
\end{pmatrix}
\in
\Gamma(1) \,\Big| \,
a\equiv d  \bmod{2} \,\,
\mathrm{and} \,\,
b \equiv c  \bmod{2} 
\right\}.
\end{equation*}
In particular, 
$\Gamma_{\theta}$ is called the {\it theta group}. 
\par
In this paper, we  propose higher level version of the theta group $\Gamma_{\theta}$ in the following way: 
\begin{equation*}
\Gamma_{\theta,N}
=
\left\{
\begin{pmatrix}
a & b \\
c & d
\end{pmatrix}
\in
\Gamma(1) \,\Big| \,
a\equiv d  \bmod{N} \,\,
\mathrm{and} \,\,
b \equiv -c  \bmod{N} 
\right\}, \,\,
(N=1,2,3,\ldots).
\end{equation*}
\par
The {\it upper half plane} $\mathscr{H}$ is defined by 
$
\mathscr{H}=
\{
\tau\in\mathbb{C} \, | \,  \Im \tau>0
\}. 
$
Moreover, set $q=\exp(2\pi i  \tau)$ and define 
\begin{equation*}
\displaystyle
\eta(\tau)=q^{\frac{1}{24}} \prod_{n=1}^{\infty} (1-q^n) 
\,\,\mathrm{and} \,\,
\theta(\tau)=\sum_{n=-\infty}^{\infty}  e^{\pi i n^2 \tau}.  
\end{equation*}
A {\it modular group} $\Gamma$ is a subgroup of finite index in $\Gamma(1).$ 
A {\it modular form} of weight $r$ is a function $F(\tau),$ 
defined and meromorphic in $\mathscr{H},$ 
which satisfies 
\begin{equation*}
F(M\tau)
=
\nu(M)
(c \tau+b)^r F(\tau), \,\,\tau\in \mathscr{H},
\end{equation*}
for all 
$
M
=
\begin{pmatrix}
a & b \\
c & d
\end{pmatrix}
\in \Gamma.
$
Here $\nu(M)$ is a complex number, independent of $\tau,$ such that $\,| \,\nu(M)\,| \,=1$ for all $M\in\Gamma.$ 
The function $\nu$ is called a  {\it multiplier system}. 
We note that $\nu$ is a character if the weight $r$ is a rational integer.  
\par
It is well known that 
$\eta(\tau)$ is a modular form of weight $\displaystyle 1/2$ on $\Gamma(1)$ and 
$\theta(\tau)$ is a modular form of weight $\displaystyle 1/2$ on $\Gamma_{\theta}.$ 
In \cite{Matsuda}, 
from the multiplier systems, $\nu_{\eta}, \nu_{\theta},$ of $\eta(\tau)$ and $\theta(\tau)$, 
we obtain manu examples of modular groups. 
\par
The aim of this paper 
is to 
construct modular forms on $\Gamma_{\theta,3}$ and $\Gamma_{\theta,4}$ and compute their multiplier systems. 
Section \ref{sec:eta} 
describes the multiplier system $\nu_{\eta}$ of $\eta(\tau)$. 
Section \ref{sec:level3} 
treats
$\displaystyle F(\tau)=\eta \left(\frac{\tau-1}{3} \right) \eta\left(\frac{\tau+1}{3} \right)$ 
and its multiplier system $\nu_{F}.$ 
Section \ref{sec:level4} 
deals with 
$\displaystyle G(\tau)=\eta \left(\frac{\tau-1}{4} \right) \eta\left(\frac{\tau+1}{4} \right)$ 
and its multiplier system $\nu_{G}.$ 
Moreover by considering the kernels of $\nu_{F}$ and $\nu_{G},$ 
we discover many examples of modular groups. 
Section \ref{sec:coset} 
considers 
coset decompositions of $\Gamma(1)$ modulo $\Gamma_{\theta,3}$ and $\Gamma_{\theta,4}.$


\section{The multiplier system of $\eta(\tau)$}
\label{sec:eta}

Knopp \cite[pp.51]{Knopp} proved that for each 
$
M
=
\begin{pmatrix}
a & b \\
c & d
\end{pmatrix}
\in
\Gamma(1),
$
\begin{equation}
\label{eqn-eta-multiplier}
\nu_{\eta}(M)=
\begin{cases}
\displaystyle
\left(
\frac{d}{c}
\right)^{*}   
\exp
\left\{
\frac{\pi i}{12}
[
(a+d)c-bd(c^2-1)-3c
]
\right\} 
&
\text{if $c$ is odd,}
\vspace{2mm}  \\
\displaystyle
\left(
\frac{c}{d}
\right)_{*}
\exp
\left\{
\frac{\pi i}{12}
[
(a+d)c-bd(c^2-1)+3d-3-3cd
]
\right\} 
&
\text{if $c$ is even,}
\end{cases}
\end{equation}
where for $c,d\in \mathbb{Z}$ with $(c,d)=1$ and $d\equiv 1 \bmod{2},$ 
\begin{equation*}
\left(
\frac{c}{d}
\right)^{*}=
\left(
\frac{c}{ |d|}
\right)
\mathrm{and} 
\left(
\frac{c}{d}
\right)_{*}
=
\left(
\frac{c}{ |d|}
\right)
(-1)^{
\frac{\sign c-1}{2}
\frac{\sign d-1}{2}
}. 
\end{equation*}

\section{Modular form on $\Gamma_{\theta,3}$ }
\label{sec:level3}

We first note 
\begin{equation*}
\Gamma_{\theta,3}
=
\left\{
\begin{pmatrix}
a & b \\
c & d
\end{pmatrix}
\in
\Gamma(1) \,\Big| \,
\begin{pmatrix}
a & b \\
c & d
\end{pmatrix}
\equiv 
\pm
\begin{pmatrix}
1 & 0 \\
0 & 1
\end{pmatrix}, 
\pm
\begin{pmatrix}
0 & -1 \\
1 & 0
\end{pmatrix} 
\bmod{3}
\right\}.
\end{equation*}

\begin{proposition}
\label{prop:character-level3}
{\it
For every $\tau\in\mathscr{H},$
set 
\begin{equation*}
F(\tau)= \eta \left(\frac{\tau-1}{3} \right) \eta\left(\frac{\tau+1}{3} \right).
\end{equation*}
The multiplier system $\nu_{F}$ is a character. 
}
\end{proposition}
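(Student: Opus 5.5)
The plan is to show that $F$ transforms under every $M\in\Gamma_{\theta,3}$ with the integral automorphy factor $(c\tau+d)^{1}$, with a constant of absolute value $1$. Multiplicativity of $\nu_F$ then follows from the exact cocycle relation of $c\tau+d$.

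\textbf{Step 1: rewrite $F$ as a product of transformed $\eta$'s.} Put
\begin{equation*}
A_{+}=\begin{pmatrix}1&-1\\0&3\end{pmatrix},\qquad
A_{-}=\begin{pmatrix}1&1\\0&3\end{pmatrix},
\end{equation*}
so that $F(\tau)=\eta(A_+\tau)\,\eta(A_-\tau)$.

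\textbf{Step 2: move $M$ past $A_\pm$.} Take $M=\begin{pmatrix}a&b\\c&d\end{pmatrix}\in\Gamma_{\theta,3}$. Using the description of $\Gamma_{\theta,3}$ modulo $3$ given above, I will show that there are $M_{\pm}\in\Gamma(1)$ with
\begin{equation*}
A_{\pm}M=M_{\pm}A_{\pm}\ \text{ if } M\equiv \pm I \bmod 3,\qquad
A_{\pm}M=M_{\pm}A_{\mp}\ \text{ if } M\equiv \pm\begin{pmatrix}0&-1\\1&0\end{pmatrix}\bmod 3 .
\end{equation*}
Concretely, one computes $A_\pm M A_\pm^{-1}$ (respectively $A_\pm M A_\mp^{-1}$). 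Its entries have denominator $3$, and the numerators vanish mod $3$ exactly because $a\equiv d$ and $b\equiv -c \pmod 3$. Determinant $1$ is automatic. This case check is the step I expect to be the main obstacle: it is a finite but fiddly verification over the four residue classes. It is also where the definition of $\Gamma_{\theta,3}$ gets used, including the fact that the factors $\eta(A_+\tau)$ and $\eta(A_-\tau)$ are interchanged in the $S$-type classes.

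\textbf{Step 3: apply the $\eta$ transformation law.} By the transformation law of $\eta$ with multiplier $\nu_\eta$ from (\ref{eqn-eta-multiplier}),
\begin{equation*}
\eta(A_\pm M\tau)=\nu_\eta(M_\pm)\,(c_\pm A_{\sigma}\tau+d_\pm)^{1/2}\,\eta(A_\sigma\tau),
\end{equation*}
where $\sigma\in\{+,-\}$ is as in Step 2. Comparing lower rows in $A_\pm M=M_\pm A_\sigma$ gives
\begin{equation*}
c_\pm A_\sigma\tau+d_\pm=\frac{3(c\tau+d)}{3}=c\tau+d .
\end{equation*}
Indeed, the lower row of $A_\pm M$ is $(3c,3d)$, while that of $M_\pm A_\sigma$ is $(c_\pm, \mp c_\pm+3d_\pm)$, so $c_\pm=3c$ and $d_\pm=d\pm c$. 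Hence
\begin{equation*}
c_\pm\frac{\tau\mp1}{3}+d_\pm=c\tau+d
\end{equation*}
(with the appropriate sign pairing). Multiplying the two factors gives
\begin{equation*}
F(M\tau)=\nu_F(M)\,(c\tau+d)\,F(\tau),
\end{equation*}
where $\nu_F(M)=\pm\nu_\eta(M_+)\nu_\eta(M_-)$. The sign comes from combining the two principal square roots, and $|\nu_F(M)|=1$. So $F$ is a modular form of weight $1$ on $\Gamma_{\theta,3}$.

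\textbf{Step 4: multiplicativity.} Since $F\not\equiv 0$ and the automorphy factor $j(M,\tau)=c\tau+d$ satisfies $j(M_1M_2,\tau)=j(M_1,M_2\tau)\,j(M_2,\tau)$ exactly (no branch ambiguity in weight $1$), evaluating $F(M_1M_2\tau)$ in two ways yields $\nu_F(M_1M_2)=\nu_F(M_1)\nu_F(M_2)$. Thus $\nu_F$ is a character, as the remark in the introduction about integral weight predicts.
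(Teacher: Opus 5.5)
Your proposal is correct and is essentially the paper's proof: your $M_\pm=A_\pm M A_\sigma^{-1}$ are exactly the paper's $M_1,M_2$ in the two residue classes mod $3$, and applying the $\eta$ transformation law gives the weight-one factor $c\tau+d$ with $\nu_F(M)=\nu_\eta(M_1)\nu_\eta(M_2)$. Your $\pm$ is in fact $+1$, since both square roots are taken of the same number $c\tau+d$; your only addition is the explicit cocycle argument in Step 4, using that $F$ never vanishes because $\eta$ doesn't, which the paper just asserts as the standard fact that an integral-weight multiplier system is a character.
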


\begin{proof}
Set 
$
M=
\begin{pmatrix}
a & b \\
c & d
\end{pmatrix}
\in 
\Gamma_{\theta,3}. 
$ 
We note that $\nu_{F}$ is a character if the weight $r$ is a rational integer.  
\par
We first suppose that 
$
M
\equiv \pm
\begin{pmatrix}
1 & 0 \\
0 & 1
\end{pmatrix} 
\bmod{3},$ 
which implies that  
\begin{align*}
F(M\tau)=&
\eta \left(\frac{ M \tau-1}{3} \right) \eta\left(\frac{ M\tau+1}{3} \right)
=
\eta \left( M_1 \left( \frac{  \tau-1}{3} \right)  \right) \eta\left( M_2  \left( \frac{ \tau+1}{3} \right)  \right) \\
=&
\nu_{\eta}(M_1) 
\left(
3c \cdot 
\frac{\tau-1}{3}
+(d+c)
\right)^{\frac12}
\eta \left(\frac{\tau-1}{3} \right)
\nu_{\eta} (M_2)
\left(
3c \cdot 
\frac{\tau+1}{3}
+(d-c)
\right)^{\frac12}
\eta\left(\frac{\tau+1}{3} \right)  \\
=&
\nu_{\eta}(M_1) \nu_{\eta} (M_2)
(c\tau+d)F(\tau),
\end{align*}
where 
\begin{equation*}
M_1
=
\begin{pmatrix}
a-c & \frac{ b-d+a-c }{3} \\
3c & d+c
\end{pmatrix}, 
M_2
=
\begin{pmatrix}
a+c & \frac{ b+d-a-c }{3} \\
3c & d-c
\end{pmatrix}. 
\end{equation*}
\par
We next assume that 
$
M
\equiv \pm
\begin{pmatrix}
0 & -1 \\
1 & 0
\end{pmatrix} 
\bmod{3},$ 
which implies that  
\begin{align*}
F(M\tau)=&
\eta \left(\frac{ M \tau-1}{3} \right) \eta\left(\frac{ M\tau+1}{3} \right)
=
\eta \left( M_1 \left( \frac{  \tau+1}{3} \right)  \right) \eta\left( M_2  \left( \frac{ \tau-1}{3} \right)  \right) \\
=&
\nu_{\eta}(M_1) 
\left(
3c \cdot 
\frac{\tau+1}{3}
+(d-c)
\right)^{\frac12}
\eta \left(\frac{\tau+1}{3} \right) 
\nu_{\eta} (M_2)
\left(
3c \cdot 
\frac{\tau-1}{3}
+(d+c)
\right)^{\frac12}
\eta\left(\frac{\tau-1}{3} \right)  \\
=&
\nu_{\eta}(M_1) \nu_{\eta} (M_2)
(c\tau+d)F(\tau),
\end{align*}
where 
\begin{equation*}
M_1
=
\begin{pmatrix}
a-c & \frac{ b-d-a+c }{3} \\
3c & d-c
\end{pmatrix}, 
M_2
=
\begin{pmatrix}
a+c & \frac{ b+d+a+c }{3} \\
3c & d+c
\end{pmatrix}. 
\end{equation*}
\end{proof}

\subsection{The case where 
$
M
=
\begin{pmatrix}
a & b \\
c & d
\end{pmatrix}
\equiv \pm
\begin{pmatrix}
1 & 0 \\
0 & 1
\end{pmatrix} 
\bmod{3}
$ 
and 
$c$ is odd
}

\begin{theorem}
\label{thm:M=pmI-c-odd}
{\it
Suppose that 
$
M=
\begin{pmatrix}
a & b \\
c & d
\end{pmatrix}
\in 
\Gamma_{\theta,3}
$ 
and 
$M\equiv 
\pm
\begin{pmatrix}
1 & 0 \\
0 & 1
\end{pmatrix} 
\bmod{3}
$ 
and 
$c$ is odd. 
Then, 
the multiplier system 
$\nu_{F}$ of 
$
\displaystyle 
F=
\eta \left(\frac{\tau-1}{3} \right) \eta\left(\frac{\tau+1}{3} \right)
$ 
is given by 
\begin{equation*}
\nu_{F}
(M)
=
\exp
\left\{
\frac{\pi i}{6}
\left[
3c+\frac13(a+d)c+\frac43bd
\right]
\right\}. 
\end{equation*}
}
\end{theorem}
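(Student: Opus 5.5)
The plan is to start from the first case of the proof of Proposition \ref{prop:character-level3}. There, for $M\equiv\pm I \bmod 3$, we obtained $\nu_F(M)=\nu_\eta(M_1)\nu_\eta(M_2)$ with
\begin{equation*}
M_1=\begin{pmatrix} a-c & \frac{b-d+a-c}{3} \\ 3c & d+c\end{pmatrix},\quad
M_2=\begin{pmatrix} a+c & \frac{b+d-a-c}{3} \\ 3c & d-c\end{pmatrix}.
\end{equation*}
Both matrices have lower-left entry $3c$, which is odd because $c$ is odd. So I will evaluate each factor with the first line of (\ref{eqn-eta-multiplier}) and multiply the results. The proof then splits into two pieces: the Jacobi-symbol part and the exponential part.

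For the symbols, I will show that
\begin{equation*}
\left(\frac{d+c}{3c}\right)^{*}\left(\frac{d-c}{3c}\right)^{*}=1 .
\end{equation*}
By multiplicativity in the denominator, this product equals
\begin{equation*}
\left(\frac{(d+c)(d-c)}{3}\right)\left(\frac{d+c}{|c|}\right)\left(\frac{d-c}{|c|}\right)
=\left(\frac{d^2-c^2}{3}\right)\left(\frac{d}{|c|}\right)^2 .
\end{equation*}
The second step uses $d\pm c\equiv d \bmod |c|$. Since $c\equiv 0$ and $d\equiv\pm1 \bmod 3$, we have $d^2-c^2\equiv 1 \bmod 3$. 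Also $(d,c)=1$, so both symbols equal $1$.

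For the exponential part, I add the two brackets. The traces agree, since $a_1+d_1=a_2+d_2=a+d$, so the first terms contribute $2\cdot 3c(a+d)$, and the $-3\cdot(3c)$ terms contribute $-18c$. The remaining term is $-(9c^2-1)(b_1d_1+b_2d_2)$. Writing $P=b-c$ and $Q=a-d$, we have $3b_1d_1=(P+Q)(d+c)$ and $3b_2d_2=(P-Q)(d-c)$. Hence $b_1d_1+b_2d_2=\frac{2}{3}(bd+ac-2cd)$, which is an integer because $b\equiv c\equiv 0 \bmod 3$. Therefore
\begin{equation*}
\nu_F(M)=\exp\left\{\frac{\pi i}{12}\left[6c(a+d)-18c-\frac{2}{3}(9c^2-1)(bd+ac-2cd)\right]\right\}.
\end{equation*}

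The main obstacle is the final step. I must show that this bracket and the target bracket $6c+\frac{2}{3}(a+d)c+\frac{8}{3}bd$ differ by a multiple of $24$. Because of the thirds, I will work with $3\times$(difference) and show it is $\equiv 0 \bmod 72$. The inputs I will use are the following:
\begin{itemize}
\item $c^2\equiv 1 \bmod 8$, because $c$ is odd, so $8\mid 9c^2-1$;
\item $ad-bc=1$;
\item $a\equiv d\equiv\pm1$ and $b\equiv c\equiv 0 \bmod 3$.
\end{itemize}
The check naturally splits into a congruence modulo $8$ and one modulo $9$. Modulo $8$, the factor $9c^2-1$ kills the messy term. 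It then remains to compare $3\,[6c(a+d)-18c]$ with $3\,[6c]+2(a+d)c+8bd$, using that $c$ is odd and $ad-bc=1$; this forces some parity relations among $a,b,d$. Modulo $9$, I will write $9c^2-1\equiv -1$ and substitute $c=3c'$, $b=3b'$. I will then use $ad\equiv 1 \bmod 3$, and if necessary $ad-bc=1$ modulo $9$, to match the cross terms.

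If the direct congruence check becomes unwieldy, my fallback is to rewrite $ac$ with the identity $a(ac+bd)=c+b\,(a^2+\dots)$, obtained by multiplying through by $ad-bc=1$. I expect this reduction modulo $24$, and not the symbol computation, to be where the care is needed.
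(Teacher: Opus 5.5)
Your proposal is correct and follows the paper's proof: the same $M_1,M_2$, the same evaluation of the symbol product to $1$, and your grouped exponent $6c(a+d)-18c-\frac{2}{3}(9c^2-1)(ac+bd-2cd)$ expands exactly to the paper's $E$. The step you flag as the main obstacle is immediate: its difference from the target bracket $6c+\frac{2}{3}(a+d)c+\frac{8}{3}bd$ is $-24c-6(c^2-1)(ac+bd-2cd)-8bd+16cd$, which is divisible by $24$ using only $8\mid c^2-1$ and $3\mid b,c$, so you need neither $ad-bc=1$, nor parity relations among $a,b,d$, nor your fallback identity.
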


\begin{proof}
We first note that 
\begin{align*}
F(M\tau)=&
\nu_{\eta}(M_1) \nu_{\eta} (M_2)
(c\tau+d)F(\tau),
\end{align*}
where 
\begin{equation*}
M_1
=
\begin{pmatrix}
a-c & \frac{ b-d+a-c }{3} \\
3c & d+c
\end{pmatrix}, 
M_2
=
\begin{pmatrix}
a+c & \frac{ b+d-a-c }{3} \\
3c & d-c
\end{pmatrix}. 
\end{equation*}
Equation (\ref{eqn-eta-multiplier})
yields 
\begin{align*}
\nu_{\eta}(M_1)=&
\left(
\frac{d+c}{3c}
\right)^{*}  
\exp
\Big\{
\frac{\pi i}{12}
\Big[
-3 a c^3-3 a c^2 d-3 b c^3-3 b c^2 d +3 c^4+6 c^3 d+3 c^2 d^2 -9 c  \\
&
\hspace{40mm}
+\frac{10 a c}{3}+\frac{a d}{3}
+\frac{b c}{3}+\frac{b d}{3}
-\frac{c^2}{3}+\frac{7 c d}{3}-\frac{d^2}{3}
\Big]
\Big\},
\end{align*}
and 
\begin{align*}
\nu_{\eta}(M_2)=&
\left(
\frac{d-c}{3c}
\right)^{*}  
\exp 
\Big\{
\frac{\pi i}{12}
\Big[
-3 a c^3+3 a c^2 d+3 b c^3-3 b c^2 d-3 c^4+6 c^3 d-3 c^2 d^2-9 c  \\
&
\hspace{40mm}
+\frac{10 a c}{3}-\frac{a d}{3}-\frac{b c}{3}+\frac{b d}{3}
+\frac{c^2}{3}+\frac{7 c d}{3}+\frac{d^2}{3}
\Big]
\Big\},
\end{align*}
which implies that 
\begin{equation*}
\nu_{F}
=
\nu_{\eta}(M_1) \nu_{\eta} (M_2)
=
\left(
\frac{d+c}{3c}
\right)^{*}  
\left(
\frac{d-c}{3c}
\right)^{*}  
\exp
\left\{
\frac{\pi i}{12}
E
\right\}, 
\end{equation*}
where 
\begin{equation*}
E=
-18 c
-6 a c^3+\frac{20 a c}{3}-6 b c^2 d+\frac{2 b d}{3}+12 c^3 d+\frac{14 c d}{3}. 
\end{equation*}
\par
Since $c\equiv 0 \bmod{3},$ it follows that 
\begin{equation*}
\left(
\frac{d+c}{3c}
\right)^{*}  
\left(
\frac{d-c}{3c}
\right)^{*}  
=
\left(
\frac{d+c}{3 |c|}
\right) 
\left(
\frac{d-c}{3 |c|}
\right)=
\left(
\frac{d^2-c^2}{3 |c|}
\right)=
 \left(
\frac{d^2}{3 |c|}
\right)=1. 
\end{equation*} 
\par
Since $c$ is odd, 
it follows that 
$c^2\equiv 1 \bmod{8},$ 
which implies that 
\begin{align*}
E=&-18 c
-6 a c^3+\frac{20 a c}{3}-6 b c^2 d+\frac{2 b d}{3}+12 c^3 d+\frac{14 c d}{3}  & \\
\equiv & \,
6c-6ac+\frac{20}{3} ac-6bd+\frac23bd+12cd+\frac{14}{3}cd &  &\bmod{24}  \\
\equiv &\, 
6c+\frac{2}{3}ac
-\frac{16}{3}bd
-12cd+
+\frac{14}{3}cd  &  &\bmod{24}  \\
\equiv &
\, 
6c+\frac{2}{3}ac
-\frac{16}{3}bd-\frac{22}{3}cd &  &\bmod{24}  \\
\equiv &
6c+\frac{2}{3}ac
+\frac{8}{3}bd+\frac{2}{3}cd
\equiv
2
\left[
3c+\frac13(a+d)c+\frac43bd
\right]   &   &\bmod{24},
\end{align*}
which proves the theorem. 
\end{proof}

\subsection{The case where 
$
M
=
\begin{pmatrix}
a & b \\
c & d
\end{pmatrix}
\equiv \pm
\begin{pmatrix}
0 & -1 \\
1 & 0
\end{pmatrix} 
\bmod{3}
$ 
and 
$c$ is odd
}

\begin{theorem}
\label{thm:M=pmT-c-odd}
{\it
Suppose that 
$
M=
\begin{pmatrix}
a & b \\
c & d
\end{pmatrix}
\in 
\Gamma_{\theta,3}
$ 
and 
$M\equiv 
\pm
\begin{pmatrix}
0 & -1 \\
1 & 0
\end{pmatrix} 
\bmod{3}
$ 
and 
$c$ is odd. 
Then, 
the multiplier system 
$\nu_{F}$ of 
$
\displaystyle 
F=
\eta \left(\frac{\tau-1}{3} \right) \eta\left(\frac{\tau+1}{3} \right)
$ 
is given by 
\begin{equation*}
\nu_{F}
(M)
=
\exp
\left\{
\frac{\pi i}{6}
\left[
3(c+2)+\frac13(a+d)c
\right]
\right\}. 
\end{equation*}
In particular, we have $\nu_{F}(T)=-i,$ where 
$
T=
\begin{pmatrix}
0 & -1 \\
1 & 0
\end{pmatrix}.  
$
}
\end{theorem}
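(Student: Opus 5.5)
Following the method of Theorem~\ref{thm:M=pmI-c-odd}, I start from the second case of the proof of Proposition~\ref{prop:character-level3}: for $M\equiv\pm T \bmod 3$ we have $\nu_F(M)=\nu_\eta(M_1)\nu_\eta(M_2)$ with
\begin{equation*}
M_1=\begin{pmatrix} a-c & \frac{b-d-a+c}{3}\\ 3c & d-c\end{pmatrix},\qquad
M_2=\begin{pmatrix} a+c & \frac{b+d+a+c}{3}\\ 3c & d+c\end{pmatrix}.
\end{equation*}
The lower-left entries are $3c$, which is odd because $c$ is odd, so the odd-$c$ branch of (\ref{eqn-eta-multiplier}) applies to both. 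Each factor is a Jacobi symbol times an exponential. I would expand the two exponents as polynomials in $a,b,c,d$ and add them into a single exponent $E$, exactly as in the previous proof.

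\textbf{Character part.} The product of symbols is
\begin{equation*}
\left(\frac{d-c}{3c}\right)^{*}\left(\frac{d+c}{3c}\right)^{*}=\left(\frac{d^2-c^2}{3|c|}\right).
\end{equation*}
Here $M\equiv\pm T$ gives $d\equiv 0$ and $c\equiv\pm1 \bmod 3$, so $c$ is no longer divisible by $3$ and we cannot simply drop $c^2$. Modulo $|c|$ the numerator is $d^2$, a square, so the factor $\bigl(\frac{d^2-c^2}{|c|}\bigr)$ equals $1$. Modulo $3$ we have $d^2-c^2\equiv -1$, so $\bigl(\frac{d^2-c^2}{3}\bigr)=-1$. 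By multiplicativity of the Jacobi symbol in the denominator, the symbol part is therefore $-1=e^{\pi i\cdot 12/12}$. This is what accounts for the shift $3c\mapsto 3(c+2)$: the factor $e^{\pi i}$ contributes $+6$ to $\frac{\pi i}{6}[\cdot]$.

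\textbf{Exponential part.} Next I reduce $E$ modulo $24$ using $c^2\equiv 1 \bmod 8$. Because of the $\frac13$'s, I would keep everything as $\frac13\times$(integer) and reduce that integer modulo $72$. I would then use $ad-bc=1$ together with $d\equiv 0$, $a\equiv0$, $b\equiv\mp1$, $c\equiv\pm1 \bmod 3$, so $bc\equiv -1 \bmod 3$. This lets me rewrite the $bd$-type terms through $ad$ and $bc$. The target is $E\equiv 2[3c+\frac13(a+d)c]-12 \bmod 24$, which combines with the $-1$ from the symbol to give the stated formula. I expect this reduction to be the main obstacle. The $bd$ term present in Theorem~\ref{thm:M=pmI-c-odd} must disappear here, so the mod-$3$ information on $a,b,c,d$ has to be used carefully. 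I would check the final congruence numerically on a couple of matrices, e.g.\ $T$ and $\begin{pmatrix}3&-1\\1&0\end{pmatrix}$, before trusting the algebra.

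\textbf{The value at $T$.} For $T$ we have $a=d=0$ and $c=1$, so the formula gives $\exp\{\frac{\pi i}{6}\cdot 9\}=e^{3\pi i/2}=-i$. As a cross-check, I would compute directly $M_1=\begin{pmatrix}-1&0\\3&-1\end{pmatrix}$ and $M_2=\begin{pmatrix}1&0\\3&1\end{pmatrix}$ and multiply their $\nu_\eta$ values.
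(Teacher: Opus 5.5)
Your route is the paper's: the factorization $\nu_F(M)=\nu_\eta(M_1)\nu_\eta(M_2)$ from the proof of Proposition~\ref{prop:character-level3}, the odd branch of (\ref{eqn-eta-multiplier}) for both factors, and the symbol computation $\left(\frac{d^2-c^2}{3|c|}\right)=\left(\frac{-1}{3}\right)\left(\frac{d^2}{|c|}\right)=-1$. That symbol computation is correct. The gap is in the exponential part. You leave it undone, and you also aim it at a false congruence.

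You say, rightly, that the $-1$ from the symbol produces the shift $3c\mapsto 3(c+2)$. But you then require $E\equiv 2[3c+\frac13(a+d)c]-12 \pmod{24}$, which counts that shift a second time. Combined with the symbol, this would give $\nu_F(M)=\exp\{\frac{\pi i}{6}[3c+\frac13(a+d)c]\}$, hence $\nu_F(T)=i$. Your target already fails at $T$: there $E=-18\equiv 6$, while $2\cdot 3-12\equiv 18 \pmod{24}$. Your own planned check, $\nu_\eta(M_1)=\nu_\eta(M_2)=e^{-\pi i/4}$, gives $-i$. The correct target is $E\equiv 2[3c+\frac13(a+d)c] \pmod{24}$.

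The missing reduction is short and does not need $ad-bc=1$:
\begin{itemize}
\item Start from $E=-18c+\frac{20}{3}ac-6ac^3+\frac23bd+\frac{22}{3}cd-6bc^2d-12c^3d$. Using $c^2\equiv 1 \pmod 8$, you get $E\equiv 6c+\frac23ac-\frac{16}{3}bd-\frac{14}{3}cd \pmod{24}$.
\item Since $3\mid d$, both $8bd$ and $8cd$ lie in $24\mathbb{Z}$. Hence $E\equiv 6c+\frac23(a+d)c+\frac83(b+c)d$.
\item $M\equiv\pm T \pmod 3$ gives $3\mid b+c$, so $\frac83(b+c)d=8(b+c)\cdot\frac{d}{3}\in 24\mathbb{Z}$. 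This, not a rewriting through $ad$ and $bc$, is why the $bd$ term of Theorem~\ref{thm:M=pmI-c-odd} drops out.
\item Adding the $12$ contributed by the symbol ($-1=e^{\pi i\cdot 12/12}$) gives $E+12\equiv 2[3(c+2)+\frac13(a+d)c] \pmod{24}$. This is exactly the stated formula.
\end{itemize}
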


\begin{proof}
We first note that 
\begin{align*}
F(M\tau)=&
\nu_{\eta}(M_1) \nu_{\eta} (M_2)
(c\tau+d)F(\tau),
\end{align*}
where 
\begin{equation*}
M_1
=
\begin{pmatrix}
a-c & \frac{ b-d-a+c }{3} \\
3c & d-c
\end{pmatrix}, 
M_2
=
\begin{pmatrix}
a+c & \frac{ b+d+a+c }{3} \\
3c & d+c
\end{pmatrix}. 
\end{equation*}
Equation (\ref{eqn-eta-multiplier})
yields 
\begin{align*}
\nu_{\eta}(M_1)=&
\left(
\frac{d-c}{3c}
\right)^{*}  
\exp
\Big\{
\frac{\pi i}{12}
\Big[
-3 a c^3+3 a c^2 d+3 b c^3-3 b c^2 d+3 c^4-6 c^3 d+3 c^2 d^2 -9 c\\ 
&
\hspace{40mm}
+\frac{10 a c}{3}-\frac{a d}{3}-\frac{b c}{3}+\frac{b d}{3}-\frac{19 c^2}{3}+\frac{11 c d}{3}-\frac{d^2}{3}
\Big]
\Big\},
\end{align*}
and 
\begin{align*}
\nu_{\eta}(M_2)=&
\left(
\frac{d+c}{3c}
\right)^{*}  
\exp 
\Big\{
\frac{\pi i}{12}
\Big[
-3 a c^3-3 a c^2 d-3 b c^3-3 b c^2 d-3 c^4-6 c^3 d-3 c^2 d^2-9 c \\
&\hspace{40mm}
+\frac{10 a c}{3}+\frac{a d}{3}
+\frac{b c}{3}+\frac{b d}{3}+\frac{19 c^2}{3}+\frac{11 c d}{3}+\frac{d^2}{3}
\Big]
\Big\},
\end{align*}
which implies that 
\begin{equation*}
\nu_{F}
=
\nu_{\eta}(M_1) \nu_{\eta} (M_2)
=
\left(
\frac{d-c}{3c}
\right)^{*}  
\left(
\frac{d+c}{3c}
\right)^{*}  
\exp
\left\{
\frac{\pi i}{12}
E
\right\}, 
\end{equation*}
where 
\begin{equation*}
E=
-18 c + \frac{20}{3} a c - 6 a c^3 + \frac23 b d + \frac{22}{3} c d - 6 b c^2 d - 
 12 c^3 d. 
\end{equation*}
\par
Since $c\equiv \pm1 \bmod{3}$ and $d\equiv0 \bmod{3},$ it follows that 
\begin{align*}
\left(
\frac{d-c}{3c}
\right)^{*}  
\left(
\frac{d+c}{3c}
\right)^{*}  
=&
\left(
\frac{d-c}{3 |c|}
\right) 
\left(
\frac{d+c}{3 |c|}
\right)=
\left(
\frac{d-c}{3}
\right)
\left(
\frac{d-c}{ |c|}
\right) 
\left(
\frac{d+c}{3}
\right)
\left(
\frac{d+c}{ |c|}
\right)  \\
=&\left(
\frac{d^2-c^2}{3}
\right)
\left(
\frac{d^2-c^2}{ |c|}
\right) 
=
\left(
\frac{-1}{3}
\right)
\left(
\frac{d^2}{ |c|}
\right) 
=-1. 
\end{align*} 
\par
Since $c$ is odd, 
it follows that 
$c^2\equiv 1 \bmod{8},$ 
which implies that 
\begin{align*}
E=&-18 c + \frac{20}{3} a c - 6 a c^3 + \frac23 b d + \frac{22}{3} c d - 6 b c^2 d - 
 12 c^3 d & \\
\equiv & \,
6 c + \frac{20}{3} a c - 6 a c + \frac23 b d + \frac{22}{3} c d - 6 b d - 
 12 c d  &  &\bmod{24}  \\
\equiv &\, 
6 c + \frac{2}{3} a c - \frac{16}3 b d - \frac{14}{3} c d &  &\bmod{24}  \\
\equiv &
\, 
6c+\frac{2}{3}ac
+\frac{8}{3}bd+\frac{10}{3}cd  &  &\bmod{24}  \\
\equiv &\,
6c+\frac{2}{3}(a+d)c
+\frac{8}{3}(b+c)d
\equiv
2
\left[
3c+\frac13(a+d)c
\right]    &  &\bmod{24},
\end{align*}
which proves the theorem. 
\end{proof}

\subsection{The case where 
$
M
=
\begin{pmatrix}
a & b \\
c & d
\end{pmatrix}
\equiv \pm
\begin{pmatrix}
1 & 0 \\
0 & 1
\end{pmatrix} 
\bmod{3}
$ 
and 
$c$ is even
}

\begin{theorem}
\label{thm:M=pmI-c-even}
{\it
Suppose that 
$
M=
\begin{pmatrix}
a & b \\
c & d
\end{pmatrix}
\in 
\Gamma_{\theta,3}
$ 
and 
$M\equiv 
\pm
\begin{pmatrix}
1 & 0 \\
0 & 1
\end{pmatrix} 
\bmod{3}
$ 
and 
$c$ is even. 
Then, 
the multiplier system 
$\nu_{F}$ of 
$
\displaystyle 
F=
\eta \left(\frac{\tau-1}{3} \right) \eta\left(\frac{\tau+1}{3} \right)
$ 
is given by 
\begin{equation*}
\nu_{F}
(M)
=
\exp
\left\{
\frac{\pi i}{6}
\left[
3(d-1)+\frac13(b-c)d
\right]
\right\}. 
\end{equation*}
}
\end{theorem}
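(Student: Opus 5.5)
Since $M\equiv\pm I \bmod 3$, the proof of Proposition~\ref{prop:character-level3} gives $\nu_F(M)=\nu_\eta(M_1)\nu_\eta(M_2)$ with
\begin{equation*}
M_1=\begin{pmatrix} a-c & \frac{b-d+a-c}{3}\\ 3c & d+c\end{pmatrix},\qquad
M_2=\begin{pmatrix} a+c & \frac{b+d-a-c}{3}\\ 3c & d-c\end{pmatrix}.
\end{equation*}
Because $c$ is even, $3c$ is even, so both factors are evaluated with the second branch of (\ref{eqn-eta-multiplier}), in which $d$ is replaced by $d\pm c$ (both odd, as $d$ is odd). I will write $\nu_F(M)=\left(\frac{3c}{d+c}\right)_{*}\left(\frac{3c}{d-c}\right)_{*}\exp\{\frac{\pi i}{12}E\}$. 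Here $E$ is the sum of the two brackets $(a'+d')c'-b'd'(c'^2-1)+3d'-3-3c'd'$ evaluated at the entries of $M_1$ and $M_2$, and it is expanded as a polynomial in $a,b,c,d$ with coefficients in $\frac13\mathbb{Z}$, exactly as in the proof of Theorem~\ref{thm:M=pmI-c-odd}.

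For the symbol part, I will first dispose of $c=0$. Then $M=\pm I$, both matrices are $\pm\begin{pmatrix}1&\ast\\0&1\end{pmatrix}$ up to sign, and the symbols equal $1$ up to the sign convention. For $c\neq0$, I factor
\begin{equation*}
\left(\tfrac{3c}{|d\pm c|}\right)=\left(\tfrac{3}{|d\pm c|}\right)\left(\tfrac{c}{|d\pm c|}\right).
\end{equation*}
By quadratic reciprocity (Jacobi symbols, with $c=2^k c_0$), $\left(\frac{c}{|d+c|}\right)\left(\frac{c}{|d-c|}\right)$ should reduce to $\left(\frac{c}{|d^2-c^2|}\right)$-type expressions. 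These collapse because $d+c\equiv d-c$ modulo $c$, while the $2$-part contributes $\left(\frac{2}{\cdot}\right)$ factors depending on $d\pm c \bmod 8$. Similarly, $\left(\frac{3}{|d+c|}\right)\left(\frac{3}{|d-c|}\right)$ becomes, by reciprocity, $\left(\frac{d^2-c^2}{3}\right)$ times a sign $(-1)^{(d+c-1)/2+(d-c-1)/2}$. Here $\left(\frac{d^2-c^2}{3}\right)=\left(\frac{d^2}{3}\right)=1$, since $3\mid c$. The $\sign$ corrections in $(\ )_{*}$ must also be tracked: the signs of $d+c$ and $d-c$ may differ when $|c|>|d|$. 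The net result should be a root of unity of the form $\exp\{\frac{\pi i}{12}\cdot(\text{linear in } c,d)\}$ that can be absorbed into $E$.

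Then I reduce $E$ modulo $24$. I will use that $d$ is odd, so $d^2\equiv1 \bmod 8$. I will also use that $c$ is even and divisible by $3$, so $c\equiv0 \bmod 6$, and terms like $c^2$ and $c^3$ simplify modulo $24$ (indeed $c^2\equiv0 \bmod 12$). Finally, I will use $a\equiv d \bmod 3$ and $ad-bc=1$ to trade $\frac{ad}{3}$-type terms for $\frac{bc}{3}$-type terms. The goal is to reach
\begin{equation*}
E\equiv 2\left[3(d-1)+\tfrac13(b-c)d\right]\pmod{24},
\end{equation*}
which yields the claimed formula. Fractional coefficients are harmless since $(b-c)d\equiv0 \bmod 3$, as $b\equiv c\equiv0$ in the case $M\equiv\pm I$.

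I expect the main obstacle to be the symbol part. Unlike the odd-$c$ case, the product $\left(\frac{3c}{d+c}\right)_{*}\left(\frac{3c}{d-c}\right)_{*}$ does not obviously collapse to $1$. The reciprocity signs, the $\left(\frac{2}{\cdot}\right)$ factors and the $\sign$ factors must combine into something like $(-1)^{(d-1)/2}$ or $\exp\{\frac{\pi i}{4}(\cdots)\}$ that matches the $3(d-1)$ term. I would first test the identity numerically on small cases, for instance $M=-I$ and $M=\begin{pmatrix}1&0\\6&1\end{pmatrix}$, and then organize the reciprocity argument by splitting according to $d \bmod 4$ and the signs of $d\pm c$.
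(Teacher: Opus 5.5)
Your route through the even branch of (\ref{eqn-eta-multiplier}) is viable and differs from the paper's. As written, though, it stops at the step that carries the content. The product $\left(\frac{3c}{d+c}\right)_{*}\left(\frac{3c}{d-c}\right)_{*}$ is only predicted, never evaluated, and the prediction is wrong: it produces nothing like $(-1)^{(d-1)/2}$, and the $3(d-1)$ term does not come from it.

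Here is how your own plan fills in for $c\neq 0$. Write $c=\epsilon 2^{k}c'$ with $\epsilon=\sign c$ and $c'>0$ odd.
\begin{itemize}
\item The $3$-part $\left(\frac{3}{|d+c|}\right)\left(\frac{3}{|d-c|}\right)$ equals $1$. When $d\pm c$ have opposite signs, $\left(\frac{|d^2-c^2|}{3}\right)=-1$ cancels the reciprocity sign $(-1)^{|c|-1}=-1$. So you do need the absolute values you dropped.
\item The $c'$-part equals $1$; there $\left(\frac{-1}{c'}\right)$ cancels the reciprocity sign.
\item The factor $\left(\frac{\epsilon}{|d+c|}\right)\left(\frac{\epsilon}{|d-c|}\right)$ cancels the $\sign$-correction in $(\ )_{*}$.
\item The $2$-part gives $(-1)^{k((d+c)^2+(d-c)^2-2)/8}=(-1)^{k(c/2)^2}=(-1)^{c/2}$.
\end{itemize}
So the symbol product is $(-1)^{c/2}=\exp\{\frac{\pi i}{12}\cdot 6c\}$; for $c=0$ it is trivially $1$.

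On the other side, the two brackets sum to
\begin{equation*}
E=6c(a+d)+6(d-1)-18cd-\frac{9c^2-1}{3}\cdot 2\left[(b-c)d+(a-d)c\right].
\end{equation*}
Write $-\frac{9c^2-1}{3}\cdot 2[\cdots]=-6c^2[\cdots]+\frac23[\cdots]$. Using $c\equiv 0 \bmod 6$, $a,d$ odd and $a\equiv d \bmod 3$, the terms $6c(a+d)$, $6c^2[\cdots]$ and $\frac23(a-d)c$ all vanish modulo $24$, and $-18cd\equiv 6c \pmod{24}$. Hence $E\equiv 6(d-1)+6c+\frac23(b-c)d \pmod{24}$.

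The $6c$ here and the $6c$ from the symbol add to $12c\equiv 0 \pmod{24}$. What remains is exactly $2[3(d-1)+\frac13(b-c)d]$, with $3(d-1)$ coming straight from the $3d'-3$ terms. For example, with $M=\begin{pmatrix}1&0\\6&1\end{pmatrix}$ the symbol product is $\left(\frac{18}{7}\right)\left(\frac{18}{5}\right)=-1=(-1)^{c/2}$ and $\nu_F(M)=e^{-\pi i/3}$, as claimed. With these computations supplied, your argument becomes a complete direct proof.

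The paper never uses the even branch. Since $F$ has weight $1$, $\nu_F$ is a character. The matrix $MT=\begin{pmatrix} b&-a\\ d&-c\end{pmatrix}$ has odd lower-left entry $d$ and satisfies $MT\equiv\pm T \bmod 3$. So Theorem~\ref{thm:M=pmT-c-odd} gives $\nu_F(MT)=\exp\{\frac{\pi i}{6}[3(d+2)+\frac13(b-c)d]\}$, and dividing by $\nu_F(T)=-i$ yields the formula in one line.

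The reduction $M\mapsto MT$ makes all the Jacobi-symbol bookkeeping above unnecessary. Your route, once completed, is a self-contained check that does not depend on the odd case or on the value of $\nu_F(T)$, at the cost of that bookkeeping.
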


\begin{proof}
Since $c$ is even, it follows that $d$ is odd. 
By 
$
T
=
\begin{pmatrix}
0 & -1 \\
1 & 0
\end{pmatrix}, 
$ 
we have 
\begin{equation*}
MT
=
\begin{pmatrix}
a & b \\
c & d
\end{pmatrix}
\begin{pmatrix}
0 & -1 \\
1 & 0
\end{pmatrix}
=
\begin{pmatrix}
b & -a \\
d & -c
\end{pmatrix}
\equiv
\pm
\begin{pmatrix}
0 & -1 \\
1 & 0
\end{pmatrix}
\bmod{3}.
\end{equation*}
\par
Proposition \ref{prop:character-level3} and Theorem \ref{thm:M=pmT-c-odd} 
imply that 
\begin{equation*}
\nu_{F}(MT)=\nu_{F}(M)\nu_{F}(T)=\nu_{F}(M)\cdot(-i)
=
\exp
\left\{
\frac{\pi i}{6}
\left[
3d+\frac13(b-c)d+6
\right]
\right\},
\end{equation*}
which proves the theorem. 
\end{proof}

\subsection{The case where 
$
M
=
\begin{pmatrix}
a & b \\
c & d
\end{pmatrix}
\equiv \pm
\begin{pmatrix}
0 & -1 \\
1 & 0
\end{pmatrix} 
\bmod{3}
$ 
and 
$c$ is even
}

\begin{theorem}
\label{thm:M=pmT-c-even}
{\it
Suppose that 
$
M=
\begin{pmatrix}
a & b \\
c & d
\end{pmatrix}
\in 
\Gamma_{\theta,3}
$ 
and 
$M\equiv 
\pm
\begin{pmatrix}
0 & -1 \\
1 & 0
\end{pmatrix} 
\bmod{3}
$ 
and 
$c$ is even. 
Then, 
the multiplier system 
$\nu_{F}$ of 
$
\displaystyle 
F=
\eta \left(\frac{\tau-1}{3} \right) \eta\left(\frac{\tau+1}{3} \right)
$ 
is given by 
\begin{equation*}
\nu_{F}
(M)
=
\exp
\left\{
\frac{\pi i}{6}
\left[
3(d+1)+\frac13(b-c)d+\frac43ac
\right]
\right\}. 
\end{equation*}
}
\end{theorem}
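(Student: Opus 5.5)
The plan is to reduce to the case $c$ odd, as in the proof of Theorem \ref{thm:M=pmI-c-even}, by multiplying by $T=\begin{pmatrix}0&-1\\1&0\end{pmatrix}$ on the right. Let $M=\begin{pmatrix}a&b\\c&d\end{pmatrix}\in\Gamma_{\theta,3}$ with $M\equiv\pm T \bmod 3$ and $c$ even. Since $ad-bc=1$, $d$ is odd. We have
\begin{equation*}
MT=\begin{pmatrix}b&-a\\d&-c\end{pmatrix}\equiv \pm T^2=\mp\begin{pmatrix}1&0\\0&1\end{pmatrix}\bmod 3 .
\end{equation*}
So $MT\in\Gamma_{\theta,3}$, it is congruent to $\pm I$ modulo $3$, and its lower-left entry $d$ is odd. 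Hence Theorem \ref{thm:M=pmI-c-odd} applies to $MT$ with $(a',b',c',d')=(b,-a,d,-c)$. It gives
\begin{equation*}
\nu_F(MT)=\exp\left\{\frac{\pi i}{6}\left[3d+\frac13(b-c)d-\frac43ac\right]\right\}.
\end{equation*}

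Next, Proposition \ref{prop:character-level3} says $\nu_F$ is a character, and Theorem \ref{thm:M=pmT-c-odd} gives $\nu_F(T)=-i=\exp\{\frac{\pi i}{6}\cdot(-3)\}$. Therefore
\begin{equation*}
\nu_F(M)=\nu_F(MT)\,\nu_F(T)^{-1}=\exp\left\{\frac{\pi i}{6}\left[3(d+1)+\frac13(b-c)d-\frac43ac\right]\right\}.
\end{equation*}

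It remains to replace $-\frac43ac$ by $+\frac43ac$. This requires $\frac83ac\equiv 0 \bmod 12$. Since $M\equiv\pm T\bmod 3$, both $a\equiv 0$ and $c\equiv 0 \bmod 3$. Also $c$ is even, so $18\mid ac$. Hence $\frac83 ac$ is a multiple of $48$, and the two exponents agree modulo $12$. This gives the stated formula.

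The only delicate point is sign bookkeeping. One must substitute the entries of $MT$ correctly into Theorem \ref{thm:M=pmI-c-odd}: the product $b'd'=(-a)(-c)=ac$, while $(a'+d')c'=(b-c)d$. One must also divide by $\nu_F(T)$ rather than multiply, in contrast to the phrasing in the previous proof. The final congruence check on the $ac$ term is where an error would be most likely.
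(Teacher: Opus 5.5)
Your strategy is the same as the paper's: form $MT=\begin{pmatrix} b&-a\\ d&-c\end{pmatrix}\equiv \mp I \bmod 3$, whose lower-left entry is odd, apply Theorem~\ref{thm:M=pmI-c-odd} to it, and solve $\nu_F(MT)=\nu_F(M)\nu_F(T)$ using $\nu_F(T)=-i$. The paper's previous proof also divides by $\nu_F(T)$ in this way, so there is no contrast there.

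However, your handling of the $ac$ term contains a genuine error. With $(a',b',c',d')=(b,-a,d,-c)$ you have $b'd'=(-a)(-c)=ac$, as you note yourself. So Theorem~\ref{thm:M=pmI-c-odd} gives $\nu_F(MT)=\exp\{\frac{\pi i}{6}[3d+\frac13(b-c)d+\frac43ac]\}$, with $+\frac43ac$, not $-\frac43ac$. Your displayed value of $\nu_F(MT)$ is therefore wrong, and so is your intermediate formula for $\nu_F(M)$.

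Your third paragraph does not repair this, because it rests on a false claim. $M\equiv\pm\begin{pmatrix}0&-1\\1&0\end{pmatrix}\bmod 3$ means $a\equiv d\equiv 0$ and $c\equiv\pm1$, $b\equiv\mp1 \bmod 3$. So $c$ is \emph{not} divisible by $3$. You only get $6\mid ac$, hence $\frac83ac=16\cdot\frac a3\cdot\frac c2$. This is $\equiv 0\bmod 12$ only when $9\mid a$, so the sign of the $ac$ term genuinely matters.

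A concrete counterexample is $M=\begin{pmatrix}3&4\\2&3\end{pmatrix}\equiv -\begin{pmatrix}0&-1\\1&0\end{pmatrix}\bmod 3$, where $\frac83ac=16\not\equiv 0\bmod 12$. The stated formula gives $\nu_F(M)=e^{22\pi i/6}=e^{-\pi i/3}$. One can confirm this directly from $\nu_\eta\begin{pmatrix}1&0\\6&1\end{pmatrix}\nu_\eta\begin{pmatrix}5&4\\6&5\end{pmatrix}=(-i)e^{\pi i/6}$. Your intermediate formula gives $e^{6\pi i/6}=-1$ instead.

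The fix is simply to substitute correctly. Then $\nu_F(M)=\nu_F(MT)\cdot i=\exp\{\frac{\pi i}{6}[3(d+1)+\frac13(b-c)d+\frac43ac]\}$ follows at once. Delete the final congruence step; what remains is exactly the paper's proof.
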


\begin{proof}
Since $c$ is even, it follows that $d$ is odd. 
By 
$
T
=
\begin{pmatrix}
0 & -1 \\
1 & 0
\end{pmatrix}, 
$ 
we have 
\begin{equation*}
MT
=
\begin{pmatrix}
a & b \\
c & d
\end{pmatrix}
\begin{pmatrix}
0 & -1 \\
1 & 0
\end{pmatrix}
=
\begin{pmatrix}
b & -a \\
d & -c
\end{pmatrix}
\equiv
\mp
\begin{pmatrix}
1 & 0 \\
0 & 1
\end{pmatrix}
\bmod{3}.
\end{equation*}
\par
Proposition \ref{prop:character-level3} and Theorem \ref{thm:M=pmI-c-odd} 
imply that 
\begin{equation*}
\nu_{F}(MT)=\nu_{F}(M)\nu_{F}(T)=\nu_{F}(M)\cdot(-i)
=
\exp
\left\{
\frac{\pi i}{6}
\left[
3d+\frac13(b-c)d+
\frac43ac
\right]
\right\},
\end{equation*}
which proves the theorem. 
\end{proof}

\subsection{Summary}

\begin{theorem}
\label{summary-level3}
{\it
Set
$
M=
\begin{pmatrix}
a & b \\
c & d
\end{pmatrix}
\in 
\Gamma_{\theta,3}. 
$ 
Then, it follows that 
the multiplier system 
$\nu_{F}$ of 
 $
\displaystyle 
F=
\eta \left(\frac{\tau-1}{3} \right) \eta\left(\frac{\tau+1}{3} \right) 
$ 
is given by 
\begin{equation*}
\nu_{F}(M)=
\begin{cases}
\exp
\left\{
\displaystyle
\frac{\pi i}{6}
\left[
3c+\frac13(a+d)c+\frac43bd
\right]
\right\}   
&
\text
{ 
if
$
M
\equiv 
\pm
\begin{pmatrix}
1 & 0 \\
0 & 1
\end{pmatrix} 
\bmod{3}
$ 
and 
$c$ is odd,
} \vspace{1mm} \\
\exp
\left\{
\displaystyle
\frac{\pi i}{6}
\left[
3(d-1)+\frac13(b-c)d
\right]
\right\}   
&
\text
{ 
if
$
M
\equiv 
\pm
\begin{pmatrix}
1 & 0 \\
0 & 1
\end{pmatrix} 
\bmod{3}
$ 
and 
$c$ is even,
}  \vspace{1mm} \\
\exp
\left\{
\displaystyle
\frac{\pi i}{6}
\left[
3(c+2)+\frac13(a+d)c
\right]
\right\}   
&
\text
{ 
if
$
M
\equiv 
\pm
\begin{pmatrix}
0 & -1 \\
1 & 0
\end{pmatrix} 
\bmod{3}
$ 
and 
$c$ is odd,
}  \vspace{1mm} \\
\exp
\left\{
\displaystyle
\frac{\pi i}{6}
\left[
3(d+1)+\frac13(b-c)d+\frac43ac
\right]
\right\}   
&
\text
{ 
if
$
M
\equiv 
\pm
\begin{pmatrix}
0 & -1 \\
1 & 0
\end{pmatrix} 
\bmod{3}
$ 
and 
$c$ is even.
} 
\end{cases}
\end{equation*}
}
\end{theorem}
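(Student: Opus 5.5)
This summary is an assembly of the four preceding results, so the plan is to check that the case split is exhaustive and then quote the appropriate theorem in each case. First, every $M\in\Gamma_{\theta,3}$ satisfies $M\equiv\pm I$ or $M\equiv\pm T \bmod 3$, by the description of $\Gamma_{\theta,3}$ given at the start of Section \ref{sec:level3}. Here $I$ is the identity and
$T=\begin{pmatrix}0&-1\\1&0\end{pmatrix}$.

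To justify that description, reduce $a\equiv d$, $b\equiv -c \bmod 3$ together with $ad-bc=1$. This gives $a^2+b^2\equiv 1 \bmod 3$. The squares mod $3$ are $0$ and $1$, so exactly one of $a,b$ is divisible by $3$. That yields exactly the four residue classes $\pm I,\pm T$. Independently, $c$ is either odd or even. Hence the four cases listed in the statement are mutually exclusive and cover all of $\Gamma_{\theta,3}$.

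In each case the formula is the one already proved:
\begin{itemize}
\item $M\equiv\pm I$ with $c$ odd: Theorem \ref{thm:M=pmI-c-odd}.
\item $M\equiv\pm I$ with $c$ even: Theorem \ref{thm:M=pmI-c-even}.
\item $M\equiv\pm T$ with $c$ odd: Theorem \ref{thm:M=pmT-c-odd}.
\item $M\equiv\pm T$ with $c$ even: Theorem \ref{thm:M=pmT-c-even}.
\end{itemize}
Behind these stand Proposition \ref{prop:character-level3}, which shows $\nu_F=\nu_\eta(M_1)\nu_\eta(M_2)$ and that $\nu_F$ is a character, and Knopp's formula \eqref{eqn-eta-multiplier}.

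The only step needing care is the well-definedness of the even-$c$ cases. Those cases were obtained by writing $\nu_F(M)=\nu_F(MT)\,\nu_F(T)^{-1}$, using $\nu_F(T)=-i$ and the multiplicativity of $\nu_F$. This is legitimate for two reasons:
\begin{itemize}
\item $T\in\Gamma_{\theta,3}$, and $MT=\begin{pmatrix}b&-a\\d&-c\end{pmatrix}$ has odd lower-left entry $d$, since $c$ even forces $d$ odd. So the odd-$c$ formulas apply to $MT$.
\item $\nu_F$ is a genuine character because $F$ has integral weight $1$. Also, the sign ambiguity $\pm M$ causes no trouble: the exponents in each case are invariant mod $12$ under $M\mapsto -M$. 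The quadratic terms in the exponent are unchanged under this sign change, and the linear term $3c$ (or $3(d\pm1)$) changes by a multiple of $6$ times an integer of the right parity.
\end{itemize}
I would check that last parity claim briefly, since it is the main (mild) obstacle. Otherwise the proof is just a collation of the preceding theorems.
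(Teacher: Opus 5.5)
Your argument is the paper's own: the summary is just the collation of Theorems \ref{thm:M=pmI-c-odd}, \ref{thm:M=pmT-c-odd}, \ref{thm:M=pmI-c-even} and \ref{thm:M=pmT-c-even}. Your check that $a^2+b^2\equiv 1 \bmod{3}$ forces $M\equiv \pm I$ or $\pm T \bmod{3}$ only spells out the exhaustiveness that the paper takes from its description of $\Gamma_{\theta,3}$.

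One correction. Your remark that each exponent is invariant mod $12$ under $M\mapsto -M$ is false.

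\begin{itemize}
\item The exponent changes by $-6c$ in the odd-$c$ cases, and by $-6d$ in the even-$c$ cases, where $d$ is odd. In every case this is a shift of $6 \bmod{12}$.
\item Hence $\nu_F(-M)=-\nu_F(M)$. This is exactly what weight $1$ forces: $F((-M)\tau)=F(M\tau)$ gives $\nu_F(-M)(-c\tau-d)=\nu_F(M)(c\tau+d)$. For example, $\nu_F(I)=1$ and $\nu_F(-I)=-1$ from the second formula.
\end{itemize}

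No such invariance is needed anyway. The formulas are evaluated on the entries of the matrix $M$ itself, not on its image in $PSL(2,\mathbb{Z})$. The $\pm$ in the case conditions only labels which residue class $M$ lies in, so there is no well-definedness issue to settle. Simply delete that remark; the rest of the proof stands.
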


\begin{proof}
The theorem follows from 
Theorems 
\ref{thm:M=pmI-c-odd}, 
\ref{thm:M=pmT-c-odd}, 
\ref{thm:M=pmI-c-even} 
and 
\ref{thm:M=pmT-c-even}. 
\end{proof}

\begin{definition}
For each 
$
M=
\begin{pmatrix}
a & b \\
c & d
\end{pmatrix}
\in 
\Gamma_{\theta,3},
$ 
we define 
\begin{equation*}
f(M)
=
\begin{cases}
\displaystyle
3c+\frac13(a+d)c+\frac43bd
&
\text
{ 
if
$
M
\equiv 
\pm
\begin{pmatrix}
1 & 0 \\
0 & 1
\end{pmatrix} 
\bmod{3}
$ 
and 
$c$ is odd,
} \vspace{1mm} \\
\displaystyle
3(d-1)+\frac13(b-c)d
&
\text
{ 
if
$
M
\equiv 
\pm
\begin{pmatrix}
1 & 0 \\
0 & 1
\end{pmatrix}
\bmod{3}
$ 
and 
$c$ is even,
}  \vspace{1mm} \\
\displaystyle
3(c+2)+\frac13(a+d)c
&
\text
{ 
if
$
M
\equiv 
\pm
\begin{pmatrix}
0 & -1 \\
1 & 0
\end{pmatrix}
\bmod{3}
$ 
and 
$c$ is odd,
}  \vspace{1mm} \\
\displaystyle
3(d+1)+\frac13(b-c)d+\frac43ac
&
\text
{ 
if
$
M
\equiv 
\pm
\begin{pmatrix}
0 & -1 \\
1 & 0
\end{pmatrix} 
\bmod{3}
$ 
and 
$c$ is even.
} 
\end{cases}
\end{equation*}
\end{definition}

From the definition, 
it follows that for each $k\in\mathbb{Z},$ 
\begin{equation*}
\nu_{F^k}(M)=
\exp \left\{
\frac{\pi i k}{6}
f(M)
\right\}. 
\end{equation*}

\subsection{The case where $k\equiv 0 \bmod{12}$}

\begin{theorem}
{\it
Suppose that 
$k\equiv 0 \bmod{12}$. 
Then, it follows that 
\begin{equation*}
\Ker \nu_{F^{k}}
=\Gamma_{\theta,3}.
\end{equation*}
}
\end{theorem}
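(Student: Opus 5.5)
We must show $\nu_{F^k}(M)=\exp\{\pi i k f(M)/6\}=1$ for every $M\in\Gamma_{\theta,3}$ when $12\mid k$. Writing $k=12m$, we have
\begin{equation*}
\nu_{F^k}(M)=\exp\{2\pi i m f(M)\},
\end{equation*}
so it suffices to show that $f(M)\in\mathbb{Z}$ in each of the four cases of the definition of $f$. This would be immediate if all coefficients were integers, but the terms $\frac13(a+d)c$, $\frac43bd$, $\frac13(b-c)d$ and $\frac43ac$ carry a denominator $3$. The plan is therefore to check divisibility by $3$ of the relevant numerators using the congruence conditions mod $3$ that define each case.

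\emph{Case $M\equiv\pm I \bmod 3$.} Here $b\equiv c\equiv 0 \bmod 3$. In the $c$ odd case, $\frac13(a+d)c\in\mathbb{Z}$ since $3\mid c$, and $\frac43bd\in\mathbb{Z}$ since $3\mid b$. In the $c$ even case, $\frac13(b-c)d\in\mathbb{Z}$ since $3\mid b-c$.

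\emph{Case $M\equiv\pm T \bmod 3$.} Here $a\equiv d\equiv 0 \bmod 3$. In the $c$ odd case, $3\mid a+d$, so $\frac13(a+d)c\in\mathbb{Z}$. In the $c$ even case, $3\mid d$ gives $\frac13(b-c)d\in\mathbb{Z}$, and $3\mid a$ gives $\frac43ac\in\mathbb{Z}$.

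Thus $f(M)\in\mathbb{Z}$ in all four cases, hence $\nu_{F^k}(M)=1$ for all $M\in\Gamma_{\theta,3}$, and $\Ker\nu_{F^k}=\Gamma_{\theta,3}$ (the inclusion $\subseteq$ being trivial, since $\nu_{F^k}$ is defined on $\Gamma_{\theta,3}$). The main step, such as it is, is the case bookkeeping: one must rely on the correct description of $\Gamma_{\theta,3}$ modulo $3$ as $\{\pm I,\pm T\}$, which the paper notes at the start of Section \ref{sec:level3}, and on the formula from Theorem \ref{summary-level3}. No parity conditions are needed for this step.
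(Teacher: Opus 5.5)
Your proof is correct and is exactly the argument the paper leaves implicit when it says ``It is obvious'': writing $k=12m$ gives $\nu_{F^k}(M)=\exp\{2\pi i m f(M)\}$, and your case check (using $M\equiv\pm I$ or $\pm T \bmod 3$) shows $f(M)\in\mathbb{Z}$ in all four cases. You have simply written out the bookkeeping the paper omits.
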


\begin{proof}
It is obvious. 
\end{proof}

\subsection{The case where $k\equiv 6 \bmod{12}$}

\begin{theorem}
\label{thmk=6-level3}
{\it
Suppose that 
$k\equiv 6 \bmod{12}$. 
Then, it follows that 
\begin{equation*}
\Ker \nu_{F^{k}}
=
\left\{
\begin{pmatrix}
a & b \\
c & d
\end{pmatrix}
\in
\Gamma_{\theta,3} \,\Big| \, 
\begin{pmatrix}
a & b \\
c & d
\end{pmatrix}
\equiv
\begin{pmatrix}
1 & 0 \\
0 & 1
\end{pmatrix}, 
\begin{pmatrix}
0 & 1 \\
1 & 1
\end{pmatrix},
\begin{pmatrix}
1 & 1 \\
1 & 0
\end{pmatrix}
\bmod{2}
\right\}.
\end{equation*}
Moreover, 
as a coset decomposition of $\Gamma_{\theta,3}$ modulo $\Ker \nu_{F^{k}}$,  
we may choose $S^{n}$ $(n=0,3),$ where 
$
S=
\begin{pmatrix}
1 & 1 \\
0 & 1
\end{pmatrix}. 
$
}
\end{theorem}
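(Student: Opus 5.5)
Since $k\equiv 6 \bmod{12}$, I write $k=6+12m$ and use the formula $\nu_{F^k}(M)=\exp\{\pi i k f(M)/6\}$ that follows the definition of $f$. The first step is to check that $f(M)\in\mathbb{Z}$ for every $M\in\Gamma_{\theta,3}$; this comes from the congruences mod $3$ in each of the four cases of Theorem \ref{summary-level3}:
\begin{itemize}
\item case 1: $b\equiv c\equiv 0 \bmod 3$;
\item case 2: $b\equiv c\equiv 0 \bmod 3$;
\item case 3: $a+d\equiv 0 \bmod 3$, since $a\equiv d\equiv 0$;
\item case 4: $d\equiv a\equiv 0 \bmod 3$.
\end{itemize}
So every fraction in $f(M)$ has an integral numerator divisible by $3$. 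Once $f(M)$ is an integer, $\exp\{2\pi i m f(M)\}=1$, and therefore
\begin{equation*}
\nu_{F^k}(M)=\exp\{\pi i f(M)\}=(-1)^{f(M)}.
\end{equation*}
Hence $\Ker \nu_{F^k}$ consists of exactly those $M$ with $f(M)$ even.

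The second step is to compute $f(M)\bmod 2$ case by case. The key remark is that if $3X$ is an integer and $X$ is an integer, then $X\equiv 3X \bmod 2$. So a term $\frac13 Y$ has the parity of $Y$, and a term $\frac43 Y$ is even. This gives:
\begin{itemize}
\item if $c$ is odd (cases 1 and 3): $f(M)\equiv c+(a+d)c\equiv 1+a+d \bmod 2$;
\item if $c$ is even, so $d$ is odd (cases 2 and 4): $f(M)\equiv (d\pm1)+(b-c)d\equiv b \bmod 2$.
\end{itemize}

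The third step translates these conditions into residues of $M$ mod $2$, using $ad-bc\equiv 1 \bmod 2$.
\begin{itemize}
\item For $c$ odd, $f(M)$ is even exactly when $a+d$ is odd, so $(a,d)\equiv(1,0)$ or $(0,1)$. The determinant condition then forces $b\equiv 1$. This gives $M\equiv\begin{pmatrix}1&1\\1&0\end{pmatrix}$ or $\begin{pmatrix}0&1\\1&1\end{pmatrix} \bmod 2$.
\item For $c$ even, $f(M)$ is even exactly when $b$ is even. Then $a\equiv d\equiv1$, so $M\equiv I \bmod 2$.
\end{itemize}
Conversely, each of the three listed residues lies in one of these branches. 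This proves the description of the kernel.

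For the coset decomposition, $\nu_{F^k}$ takes values in $\{\pm1\}$, so $\Ker\nu_{F^k}$ has index at most $2$ in $\Gamma_{\theta,3}$. The matrix $S^3=\begin{pmatrix}1&3\\0&1\end{pmatrix}$ is $\equiv I \bmod 3$, so it lies in $\Gamma_{\theta,3}$. But $S^3\equiv\begin{pmatrix}1&1\\0&1\end{pmatrix} \bmod 2$, which is not one of the three listed residues, so $S^3\notin\Ker\nu_{F^k}$. Hence the index is exactly $2$, with coset representatives $S^0$ and $S^3$.

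The only delicate point is the parity bookkeeping of the fractional terms. This is handled by first establishing that $f(M)$ is integral and then using the remark in the second step.
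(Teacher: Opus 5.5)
Your proposal is correct and follows essentially the paper's proof: reduce $\nu_{F^k}(M)$ to $\exp\{\pi i f(M)\}$, compute $f(M) \bmod 2$ in each of the four cases of Theorem \ref{summary-level3}, and convert the parity conditions into residues mod $2$ using $ad-bc\equiv 1$. You also make explicit two points the paper leaves implicit: that $f(M)\in\mathbb{Z}$, which is needed to discard the $2\pi i m f(M)$ term, and the index-$2$ argument showing $S^3\in\Gamma_{\theta,3}\setminus\Ker\nu_{F^k}$, which justifies the coset representatives.
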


\begin{proof}
For each 
$
M
=
\begin{pmatrix}
a & b \\
c & d
\end{pmatrix}
\in
\Gamma_{\theta,3}, 
$ 
we have 
\begin{equation*}
\nu_{F^{k}  }(M)=
\exp
\{\pi i f(M) \}. 
\end{equation*}
\par
If $M \equiv \pm
\begin{pmatrix}
1 & 0 \\
0 & 1
\end{pmatrix}
\bmod{3}
$ 
and $c$ is odd, 
we have 
\begin{align*}
f(M)\equiv 0 \bmod{2}
\Longleftrightarrow& \,
3c+\frac13 (a+d)c+\frac43bd
\equiv c+(a+d)c \equiv 0 \bmod{2} \\
\Longleftrightarrow& \,
1+a+d\equiv 0 \bmod{2} \\
\Longleftrightarrow& \,
(a,d)\equiv (1,0), (0,1) \bmod{2} \\
\Longleftrightarrow& \,
\begin{pmatrix}
a & b \\
c & d
\end{pmatrix}
\equiv 
\begin{pmatrix}
1 & 1 \\
1 & 0
\end{pmatrix},
\begin{pmatrix}
0 & 1 \\
1 & 1
\end{pmatrix}
\bmod{2}. 
\end{align*}
\par
If $M \equiv \pm
\begin{pmatrix}
1 & 0 \\
0 & 1
\end{pmatrix}
\bmod{3}
$ 
and $c$ is even, 
we have 
\begin{align*}
f(M)\equiv 0 \bmod{2}
\Longleftrightarrow& \,
3(d-1)+\frac13(b-c)d
\equiv d-1+(b-c)d \equiv 0 \bmod{2} \\
\Longleftrightarrow& \,
d-1+bd \equiv 0 \bmod{2} \\
\Longleftrightarrow& \,
(b,d)\equiv (0,1) \bmod{2} \\
\Longleftrightarrow& \,
\begin{pmatrix}
a & b \\
c & d
\end{pmatrix}
\equiv 
\begin{pmatrix}
1 & 0 \\
0 & 1
\end{pmatrix}
\bmod{2}. 
\end{align*}
\par
If $M \equiv \pm
\begin{pmatrix}
0 & -1 \\
1 & 0
\end{pmatrix}
\bmod{3}
$ 
and $c$ is odd, 
we have 
\begin{align*}
f(M)\equiv 0 \bmod{2}
\Longleftrightarrow& \,
3(c+2)+\frac13 (a+d)c \equiv c+(a+d)c \equiv 0 \bmod{2} \\
\Longleftrightarrow& \,
1+a+d\equiv 0 \bmod{2} \\
\Longleftrightarrow& \,
(a,d)\equiv (1,0), (0,1) \bmod{2} \\
\Longleftrightarrow& \,
\begin{pmatrix}
a & b \\
c & d
\end{pmatrix}
\equiv 
\begin{pmatrix}
1 & 1 \\
1 & 0
\end{pmatrix},
\begin{pmatrix}
0 & 1 \\
1 & 1
\end{pmatrix}
\bmod{2}. 
\end{align*}
\par
If $M \equiv \pm
\begin{pmatrix}
0 & -1 \\
1 & 0
\end{pmatrix}
\bmod{3}
$ 
and $c$ is even, 
we have 
\begin{align*}
f(M)\equiv 0 \bmod{2}
\Longleftrightarrow& \,
3(d+1)+\frac13 (b-c)d+\frac43ac \equiv d+1+bd \equiv 0 \bmod{2} \\
\Longleftrightarrow& \,
(b,d)\equiv  (0,1) \bmod{2} \\
\Longleftrightarrow& \,
\begin{pmatrix}
a & b \\
c & d
\end{pmatrix}
\equiv 
\begin{pmatrix}
1 & 0 \\
0 & 1
\end{pmatrix}
\bmod{2}. 
\end{align*}
\par
The coset decomposition can be obtained by considering the values of $\nu_{F^{k}  }(M).$
\end{proof}

\subsection{The case where $k\equiv \pm 3 \bmod{12}$}

\begin{lemma}
\label{lem:level3-mod4}
{\it
Set 
$
M=
\begin{pmatrix}
a & b \\
c & d
\end{pmatrix}
\in\Gamma_{\theta,3}.
$
\newline
$\mathrm{(1)}$ If 
$
M
\equiv
\pm
\begin{pmatrix}
1 & 0 \\
0 & 1
\end{pmatrix}
\bmod{3}
$ 
and $c$ is odd, 
we have 
\begin{align}
f(M)\equiv 0 \bmod{4} 
\Longleftrightarrow&
\, a+d\equiv -1 \bmod{4} \notag  \\
\Longleftrightarrow&
M
\equiv
\begin{pmatrix}
0 & -1 \\
1 & -1
\end{pmatrix}, 
\begin{pmatrix}
0 & 1 \\
-1 & -1
\end{pmatrix}, 
\begin{pmatrix}
1 & 1 \\
1 & 2
\end{pmatrix}, 
\begin{pmatrix}
1 & -1 \\
-1 & 2
\end{pmatrix},  \notag \\
&\hspace{10mm}
\begin{pmatrix}
2 & 1 \\
1 & 1
\end{pmatrix}, 
\begin{pmatrix}
2 & -1 \\
-1 & 1
\end{pmatrix}, 
\begin{pmatrix}
-1 & -1 \\
1 & 0
\end{pmatrix}, 
\begin{pmatrix}
-1 & 1 \\
-1 & 0
\end{pmatrix} \bmod{4}.   \label{eqn:level3-mod4-pmI-c-odd}
\end{align}
$\mathrm{(2)}$ If 
$
M
\equiv
\pm
\begin{pmatrix}
1 & 0 \\
0 & 1
\end{pmatrix}
\bmod{3}
$ 
and $c$ is even, we have  
\begin{align}
 \label{eqn:level3-mod4-pmI-c-even}
f(M)\equiv 0 \bmod{4} 
\Longleftrightarrow& 
\, b-c\equiv d-1 \bmod{4} \notag \\
\Longleftrightarrow& \, 
M
\equiv 
\begin{pmatrix}
1 & 0 \\
0 & 1
\end{pmatrix}, 
\begin{pmatrix}
1 & 2 \\
2 & 1
\end{pmatrix}, 
\begin{pmatrix}
-1 & 0 \\
2 & -1
\end{pmatrix}, 
\begin{pmatrix}
-1 & 2 \\
0 & -1
\end{pmatrix} 
\bmod{4}.
\end{align}
$\mathrm{(3)}$ If 
$
M
\equiv
\pm
\begin{pmatrix}
0 & -1 \\
1 & 0
\end{pmatrix}
\bmod{3}
$ 
and $c$ is odd, 
we have 
\begin{align}
f(M)\equiv 0 \bmod{4} 
\Longleftrightarrow&
\, a+d\equiv 1 \bmod{4}  \notag \\
\Longleftrightarrow&
M
\equiv
\begin{pmatrix}
0 & -1 \\
1 & 1
\end{pmatrix}, 
\begin{pmatrix}
0 & 1 \\
-1 & 1
\end{pmatrix}, 
\begin{pmatrix}
1 & -1 \\
1 & 0
\end{pmatrix}, 
\begin{pmatrix}
1 & 1 \\
-1 & 0
\end{pmatrix},  \notag \\
&\hspace{10mm}
\begin{pmatrix}
2 & 1 \\
1 & -1
\end{pmatrix}, 
\begin{pmatrix}
2 & -1 \\
-1 & -1
\end{pmatrix}, 
\begin{pmatrix}
-1 & 1 \\
1 & 2
\end{pmatrix}, 
\begin{pmatrix}
-1 & -1 \\
-1 & 2
\end{pmatrix} \bmod{4}.   \label{eqn:level3-mod4-pmT-c-odd}
\end{align}
$\mathrm{(4)}$ If 
$
M
\equiv
\pm
\begin{pmatrix}
0 & -1 \\
1 & 0
\end{pmatrix}
\bmod{3}
$ 
and $c$ is even, we have  
\begin{align}
\label{eqn:level3-mod4-pmT-c-even}
f(M)\equiv 0 \bmod{4} 
\Longleftrightarrow& \,
b-c\equiv -d-1 \bmod{4} \notag \\
\Longleftrightarrow&
M
\equiv 
\begin{pmatrix}
-1 & 0 \\
0 & -1
\end{pmatrix}, 
\begin{pmatrix}
-1 & 2 \\
2 & -1
\end{pmatrix}, 
\begin{pmatrix}
1 & 0 \\
2 & 1
\end{pmatrix}, 
\begin{pmatrix}
1 & 2 \\
0 & 1
\end{pmatrix} 
\bmod{4}.
\end{align}
}
\end{lemma}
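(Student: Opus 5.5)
The plan is to reduce each of the four formulas for $f(M)$ modulo $4$ to a polynomial congruence in $a,b,c,d$, using the mod-$3$ shape of $M$ to clear the denominators $3$. The key observation is that in every case the terms with a factor $\frac13$ are integers. If $x\equiv 0 \bmod 3$, write $x=3x'$. Then $x'\equiv 3x\equiv -x \bmod 4$, because $9\equiv 1 \bmod 4$. Hence $\frac13 xy\equiv -xy \bmod 4$ and $\frac43 xy=4x'y\equiv 0 \bmod 4$. We will also use that for odd $u$ one has $u^2\equiv 1 \bmod 8$, so $u^{-1}\equiv u \bmod 4$.

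Case (1): $M\equiv\pm I \bmod 3$, $c$ odd. Here $b\equiv c\equiv 0 \bmod 3$, so
\[
f(M)\equiv 3c-(a+d)c\equiv -c(1+a+d) \bmod 4 .
\]
Since $c$ is a unit mod $4$, $f(M)\equiv 0$ holds iff $a+d\equiv -1 \bmod 4$.

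Case (3): $M\equiv\pm T \bmod 3$, $c$ odd. Now $a\equiv d\equiv 0 \bmod 3$, and
\[
f(M)\equiv 3c+6-(a+d)c\equiv 2-c(1+a+d) \bmod 4 .
\]
Because $c$ is odd, $c(1+a+d)\equiv 2 \bmod 4$ exactly when $1+a+d\equiv 2$. So the condition is $a+d\equiv 1 \bmod 4$.

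Case (2): $M\equiv\pm I \bmod 3$, $c$ even, so $d$ is odd. Then
\[
f(M)\equiv 3(d-1)-(b-c)d \bmod 4 .
\]
Multiplying by $d\equiv d^{-1}$ gives $b-c\equiv 3d(d-1)\equiv 3(1-d)\equiv d-1 \bmod 4$.

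Case (4): $M\equiv\pm T \bmod 3$, $c$ even. The term $\frac43ac$ vanishes mod $4$ since $a\equiv 0 \bmod 3$, and
\[
f(M)\equiv 3(d+1)-(b-c)d \bmod 4 .
\]
This gives $b-c\equiv 3d(d+1)\equiv 3(1+d)\equiv -(d+1) \bmod 4$.

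The second equivalence in each part is a finite enumeration. By the Chinese remainder theorem, the mod-$3$ condition places no restriction on the residue of $M$ mod $4$. So in each part I list all $\begin{pmatrix} a&b\\c&d\end{pmatrix}$ in $SL(2,\mathbb{Z}/4)$ with the prescribed parity of $c$ that satisfy the derived linear condition.

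For (1) and (3), $c\equiv\pm1$. For each $a$, $d$ is fixed by the trace condition, and then $b\equiv (ad-1)c^{-1}$ is determined. This yields $4\cdot 2=8$ matrices.

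For (2) and (4), $c\in\{0,2\}$ and $d\equiv\pm1$. Here $a\equiv d^{-1}\equiv d \bmod 4$ automatically, since $ad-bc\equiv ad \bmod 2$ and more precisely $ad\equiv 1+bc$. The linear condition fixes $b$ given $c,d$, and I then check the determinant, which leaves $4$ matrices.

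The main obstacle is not conceptual but bookkeeping. One must justify the reduction of $\frac13$ and $\frac43$ terms mod $4$ carefully, since they are only legitimate after using the mod-$3$ vanishing of the relevant entry. One must also get the enumeration, including signs and the determinant check, exactly matching the displayed lists. I would verify each listed matrix has determinant $\equiv 1 \bmod 4$ and satisfies the trace or $b-c$ condition, and confirm the counts $8,4,8,4$.
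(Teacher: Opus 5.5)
Your proposal is correct and takes the same route as the paper. Like the paper, you reduce $f(M)$ modulo $4$ using $\frac13 x\equiv -x$ and $\frac43 x\equiv 0 \pmod 4$ for $x\equiv 0 \pmod 3$, solve the resulting linear congruence using that $c$ (resp.\ $d$) is a unit mod $4$, and enumerate residues mod $4$. Your reductions in all four cases, and the lists they produce, agree with the paper's; you also justify the $\frac13$ step and do cases (3), (4) explicitly, where the paper does not.

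One small slip: in cases (2) and (4), $a\equiv d \pmod 4$ is not automatic for even $c$ ($c\equiv 2$ with $b$ odd gives $ad\equiv -1$). It does hold once the linear condition forces $b\equiv c\pm d-1$ to be even, so that $bc\equiv 0 \pmod 4$, and your determinant check covers this anyway.
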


\begin{proof}
We treat cases (1) and (2). The other cases can be proved in the same way. 
\newline
{\bf Case (1).}
From the definition of $f(M),$ it follows that 
\begin{align*}
f(M)\equiv 0 \bmod{4}
\Longleftrightarrow&\, 
3c+\frac13(a+d)c+\frac43bd\equiv 3c-(a+d)c\equiv 0 \bmod{4}  \\
\Longleftrightarrow&\, 
3-a-d\equiv 0 \bmod{4} \\
\Longleftrightarrow&\, 
(a,d)\equiv (0,-1), (1,2), (2,1), (-1,0) \bmod{4}  \\
\Longleftrightarrow&\, 
M
\equiv
\begin{pmatrix}
0 & -1 \\
1 & -1
\end{pmatrix}, 
\begin{pmatrix}
0 & 1 \\
-1 & -1
\end{pmatrix}, 
\begin{pmatrix}
1 & 1 \\
1 & 2
\end{pmatrix}, 
\begin{pmatrix}
1 & -1 \\
-1 & 2
\end{pmatrix},  \\
&\hspace{10mm}
\begin{pmatrix}
2 & 1 \\
1 & 1
\end{pmatrix}, 
\begin{pmatrix}
2 & -1 \\
-1 & 1
\end{pmatrix}, 
\begin{pmatrix}
-1 & -1 \\
1 & 0
\end{pmatrix}, 
\begin{pmatrix}
-1 & 1 \\
-1 & 0
\end{pmatrix} \bmod{4}. 
\end{align*}
{\bf Case (2).}
From the definition of $f(M),$ it follows that 
\begin{align*}
f(M)\equiv 0 \bmod{4}
\Longleftrightarrow&\, 
3(d-1)+\frac13(b-c)d
\equiv 
3(d-1)-(b-c)d \equiv 0 \bmod{4}  \\
\Longleftrightarrow&\, 
(b-c-3)d\equiv 1 \bmod{4}  \\
\Longleftrightarrow&\, 
b-c-3\equiv d.
\end{align*}
If $d\equiv 1 \bmod{4},$ 
we have 
\begin{align*}
b-c-3\equiv 1 \bmod{4}
\Longleftrightarrow&\,
(b,c)\equiv (0,0), (2,2) \bmod{4}   \\
\Longleftrightarrow&\,
M
\equiv
\begin{pmatrix}
1 & 0\\
0 & 1
\end{pmatrix}, 
\begin{pmatrix}
1 & 2\\
2 & 1
\end{pmatrix} \bmod{4}. 
\end{align*}
If $d\equiv -1 \bmod{4},$ 
we have 
\begin{align*}
b-c-3\equiv -1 \bmod{4}
\Longleftrightarrow&\,
(b,c)\equiv (0,2), (2,0) \bmod{4}   \\
\Longleftrightarrow&\,
M
\equiv
\begin{pmatrix}
-1 & 0\\
2 & -1
\end{pmatrix}, 
\begin{pmatrix}
-1 & 2\\
0 & -1
\end{pmatrix} \bmod{4}. 
\end{align*}
\end{proof}

\begin{theorem}
\label{thmk=3-level3}
{\it
Suppose that 
$k\equiv \pm 3 \bmod{12}$. 
Then, it follows that 
\begin{align*}
\Ker \nu_{F^{k}}
=
\Big\{
M=
\begin{pmatrix}
a & b \\
c & d
\end{pmatrix}
\in
\Gamma_{\theta,3} \,\Big| \, 
&  &a+d \equiv& -1 &  &\bmod{4}  &  &(c\equiv 0 \bmod{3}, c\equiv 1 \bmod{2}),  \\
&  &b-c \equiv& d-1&  &\bmod{4}  & &(c\equiv 0 \bmod{3}, c \equiv 0 \bmod{2}), \\
\vspace{2mm}
&  &a+d \equiv& 1 &  &\bmod{4}  &  &(c \not\equiv 0 \bmod{3}, c \equiv 1 \bmod{2}),   \\
&  &b-c \equiv& -d-1 &  &\bmod{4} & &(c\not\equiv 0 \bmod{3}, c\equiv 0 \bmod{2})
\Big\}.
\end{align*}
Moreover, 
as a coset decomposition of $\Gamma_{\theta,3}$ modulo $\Ker \nu_{F^{k}}$,  
we may choose $S^{n}$ $(n=0,3,6,9),$ where 
$
S=
\begin{pmatrix}
1 & 1 \\
0 & 1
\end{pmatrix}. 
$
}
\end{theorem}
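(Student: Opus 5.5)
The plan is to follow the template of Theorem \ref{thmk=6-level3}, using Lemma \ref{lem:level3-mod4} for the kernel. For $k\equiv \pm 3 \bmod{12}$ we have
\begin{equation*}
\nu_{F^{k}}(M)=\exp\left\{\frac{\pi i k}{6}f(M)\right\}=\exp\left\{\pm\frac{\pi i}{2}f(M)\right\}\cdot \exp\{2\pi i\, m f(M)\}
\end{equation*}
with $k=12m\pm 3$. The second factor equals $1$ provided $f(M)\in\mathbb{Z}$. So the first step is to check that $f(M)$ is an integer in each of the four cases. In case (1) this holds because $3\mid c$. In case (3), $d\equiv 0$ and $a\equiv 0 \bmod 3$, so $3\mid (a+d)$. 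In case (2), $3\mid c$ and $b\equiv 0 \bmod 3$, so $3\mid (b-c)$. In case (4), $b\equiv \mp 1$ and $c\equiv\pm1 \bmod 3$, so again $3\mid (b-c)$.

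Given integrality, $\nu_{F^{k}}(M)=i^{\pm f(M)}$, and $M\in\Ker\nu_{F^{k}}$ if and only if $f(M)\equiv 0 \bmod 4$. The four subcases of the statement correspond exactly to the four cases of Lemma \ref{lem:level3-mod4}. The condition $c\equiv 0 \bmod 3$ is equivalent to $M\equiv\pm I \bmod 3$ within $\Gamma_{\theta,3}$, and $c\not\equiv 0 \bmod 3$ is equivalent to $M\equiv \pm T \bmod 3$. Reading off the congruences $a+d\equiv -1$, $b-c\equiv d-1$, $a+d\equiv 1$ and $b-c\equiv -d-1 \bmod 4$ from the lemma then gives the description of the kernel. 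Cases (3) and (4) of the lemma were only asserted to be proved "in the same way", so I would redo those two quickly. For example, in case (3), $\frac13(a+d)c\equiv -(a+d)c \bmod 4$ when $3\mid(a+d)$, and with $c$ odd the condition $3(c+2)-(a+d)c\equiv 0 \bmod 4$ becomes $3c+2\equiv (a+d)c$. Multiplying by $c$ (using $c^2\equiv1 \bmod 8$) gives $a+d\equiv 3+2c\equiv 1 \bmod 4$.

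For the cosets, the image of $\nu_{F^{k}}$ lies in the group of fourth roots of unity $\{1,i,-1,-i\}$. Since $\nu_{F^k}$ is a character on $\Gamma_{\theta,3}$ (Proposition \ref{prop:character-level3}), $[\Gamma_{\theta,3}:\Ker\nu_{F^k}]$ equals the order of the image, so it suffices to show that $n\mapsto\nu_{F^k}(S^n)$ takes four distinct values for $n=0,3,6,9$. The matrix $S^3=\begin{pmatrix}1&3\\0&1\end{pmatrix}$ lies in $\Gamma_{\theta,3}$, is $\equiv I \bmod 3$, and has $c=0$ even. By Theorem \ref{summary-level3} (second case), $f(S^{3})=3(1-1)+\frac13(3-0)\cdot 1=1$. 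Hence $\nu_{F^k}(S^{3n'})=i^{\pm n'}$, which runs through all four fourth roots of unity for $n'=0,1,2,3$. The cosets $S^{n}\Ker\nu_{F^k}$ are therefore distinct and exhaust $\Gamma_{\theta,3}$.

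The main obstacle is bookkeeping rather than conceptual. One must confirm integrality of $f(M)$ in the two "even $c$" cases, and correctly verify the unproved cases (3) and (4) of Lemma \ref{lem:level3-mod4}, where sign errors modulo $4$ are easy to make (for instance the replacement $\frac13\equiv -1 \bmod 4$, valid only after factoring out the integer multiple of $3$).
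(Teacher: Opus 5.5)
Your argument takes the same route as the paper. The paper's proof only notes that $\nu_{F^k}(M)=\exp\{\pm\frac{\pi i}{2}f(M)\}$, so that $M\in\Ker\nu_{F^k}$ iff $f(M)\equiv 0 \bmod{4}$ (Lemma~\ref{lem:level3-mod4}), and says the cosets are read off from the values of $\nu_{F^k}$. You fill in the details: integrality of $f(M)$, case (3) of the lemma, and $f(S^3)=1$ combined with the first isomorphism theorem to get index $4$. These are correct except for one justification.

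In case (4) you claim $3\mid(b-c)$, but this is false. From $M\equiv\pm T \bmod{3}$ you get $b\equiv\mp1$ and $c\equiv\pm1$, so $b-c\equiv\mp2\not\equiv 0 \bmod{3}$; it is $b+c$ that is divisible by $3$. Integrality there comes instead from $d\equiv a\equiv 0 \bmod{3}$: $3\mid d$ handles $\frac13(b-c)d$, and $3\mid a$ handles $\frac43ac$. Similarly, in case (1) the term $\frac43bd$ needs $3\mid b$, not just $3\mid c$.

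Your mod-$4$ reduction $\frac13(b-c)d\equiv-(b-c)d \bmod{4}$ in case (4) remains valid once the factor $3$ is taken from $d$, because $x/3\equiv -x \bmod{4}$ for every integer $x$ divisible by $3$. So the kernel description and the coset representatives $S^0,S^3,S^6,S^9$ are unaffected.
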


\begin{proof}
For each $M\in\Gamma_{\theta,3},$ 
we have 
\begin{equation*}
\nu_{F^k}(M)=
\exp
\left\{
\pm
\frac{\pi i}{2}
\left[
f(M)
\right]
\right\},
\end{equation*}
which proves the theorem. 
\par
The coset decomposition can be obtained by considering the values of $\nu_{F^{k}  }(M).$
\end{proof}

\subsection{The case where $k\equiv \pm 4 \bmod{12}$}

\begin{lemma}
\label{lem:level3-mod3}
{\it
Set 
$
M=
\begin{pmatrix}
a & b \\
c & d
\end{pmatrix}
\in\Gamma_{\theta,3}.
$
\newline
$\mathrm{(1)}$ If 
$
M
\equiv
\pm
\begin{pmatrix}
1 & 0 \\
0 & 1
\end{pmatrix}
\bmod{3}
$ 
and $c$ is odd, 
we have 
\begin{align*}
f(M)\equiv 0 \bmod{3} 
\Longleftrightarrow
\frac{b}{3} \equiv \frac{c}{3} \bmod{3}
\Longleftrightarrow
M
\equiv& 
\pm
\begin{pmatrix}
1 & 0 \\
0 & 1
\end{pmatrix}, 
\pm
\begin{pmatrix}
1 & 3 \\
3 & 1
\end{pmatrix}, 
\pm
\begin{pmatrix}
1 & -3 \\
-3 & 1
\end{pmatrix},  \notag \\
&
\pm
\begin{pmatrix}
4 & 0 \\
0 & -2
\end{pmatrix},  
\pm
\begin{pmatrix}
4 & 3 \\
3 & -2
\end{pmatrix}, 
\pm
\begin{pmatrix}
4 & -3 \\
-3 & -2
\end{pmatrix}, \notag \\
&
\pm
\begin{pmatrix}
-2 & 0 \\
0 & 4
\end{pmatrix}, 
\pm
\begin{pmatrix}
-2 & 3 \\
3 & 4
\end{pmatrix}, 
\pm
\begin{pmatrix}
-2 & -3 \\
-3 & 4
\end{pmatrix} 
\bmod{9}.   
\end{align*}
$\mathrm{(2)}$ If 
$
M
\equiv
\pm
\begin{pmatrix}
1 & 0 \\
0 & 1
\end{pmatrix}
\bmod{3}
$ 
and $c$ is even, 
we have 
\begin{align*}
f(M)\equiv 0 \bmod{3} 
\Longleftrightarrow
\frac{b}{3} \equiv \frac{c}{3} \bmod{3}
\Longleftrightarrow
M
\equiv& 
\pm
\begin{pmatrix}
1 & 0 \\
0 & 1
\end{pmatrix}, 
\pm
\begin{pmatrix}
1 & 3 \\
3 & 1
\end{pmatrix}, 
\pm
\begin{pmatrix}
1 & -3 \\
-3 & 1
\end{pmatrix},  \notag \\
&
\pm
\begin{pmatrix}
4 & 0 \\
0 & -2
\end{pmatrix},  
\pm
\begin{pmatrix}
4 & 3 \\
3 & -2
\end{pmatrix}, 
\pm
\begin{pmatrix}
4 & -3 \\
-3 & -2
\end{pmatrix}, \notag \\
&
\pm
\begin{pmatrix}
-2 & 0 \\
0 & 4
\end{pmatrix}, 
\pm
\begin{pmatrix}
-2 & 3 \\
3 & 4
\end{pmatrix}, 
\pm
\begin{pmatrix}
-2 & -3 \\
-3 & 4
\end{pmatrix} 
\bmod{9}.   
\end{align*}
$\mathrm{(3)}$ If 
$
M
\equiv
\pm
\begin{pmatrix}
0 & -1 \\
1 & 0
\end{pmatrix}
\bmod{3}
$ 
and $c$ is odd, 
we have 
\begin{align*}
f(M)\equiv 0 \bmod{3} 
\Longleftrightarrow
\frac{a}{3} \equiv -\frac{d}{3} \bmod{3}
\Longleftrightarrow
M
\equiv& 
\pm
\begin{pmatrix}
0 & -1 \\
1 & 0
\end{pmatrix}, 
\pm
\begin{pmatrix}
0 & 2 \\
4 & 0
\end{pmatrix}, 
\pm
\begin{pmatrix}
0 & -4 \\
-2 & 0
\end{pmatrix},  \notag \\
&
\pm
\begin{pmatrix}
3 & -1 \\
1 & -3
\end{pmatrix},  
\pm
\begin{pmatrix}
3 & 2 \\
4 & -3
\end{pmatrix}, 
\pm
\begin{pmatrix}
3 & -4 \\
-2 & -3
\end{pmatrix}, \notag \\
&
\pm
\begin{pmatrix}
-3 & -1 \\
1 & 3
\end{pmatrix}, 
\pm
\begin{pmatrix}
-3 & 2 \\
4 & 3
\end{pmatrix},  
\pm
\begin{pmatrix}
-3 & -4 \\
-2 & 3
\end{pmatrix} 
\bmod{9}.  
\end{align*}
$\mathrm{(4)}$ If 
$
M
\equiv
\pm
\begin{pmatrix}
0 & -1 \\
1 & 0
\end{pmatrix}
\bmod{3}
$ 
and $c$ is even, 
we have 
\begin{align*}
f(M)\equiv 0 \bmod{3} 
\Longleftrightarrow
\frac{a}{3} \equiv -\frac{d}{3} \bmod{3}
\Longleftrightarrow
M
\equiv& 
\pm
\begin{pmatrix}
0 & -1 \\
1 & 0
\end{pmatrix}, 
\pm
\begin{pmatrix}
0 & 2 \\
4 & 0
\end{pmatrix}, 
\pm
\begin{pmatrix}
0 & -4 \\
-2 & 0
\end{pmatrix},  \notag \\
&
\pm
\begin{pmatrix}
3 & -1 \\
1 & -3
\end{pmatrix},  
\pm
\begin{pmatrix}
3 & 2 \\
4 & -3
\end{pmatrix}, 
\pm
\begin{pmatrix}
3 & -4 \\
-2 & -3
\end{pmatrix}, \notag \\
&
\pm
\begin{pmatrix}
-3 & -1 \\
1 & 3
\end{pmatrix}, 
\pm
\begin{pmatrix}
-3 & 2 \\
4 & 3
\end{pmatrix},  
\pm
\begin{pmatrix}
-3 & -4 \\
-2 & 3
\end{pmatrix} 
\bmod{9}.   
\end{align*}
}
\end{lemma}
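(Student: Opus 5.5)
The plan is to reduce $f(M)$ modulo $3$ in each of the four cases of the Definition of $f$, using the congruences mod $3$ that $M\in\Gamma_{\theta,3}$ imposes, and then to list the admissible residues of $M$ mod $9$. In every case the fractional terms of $f(M)$ will be rewritten as an integer multiple of an integer quotient such as $c/3$ or $b/3$. Then $f(M)\bmod 3$ becomes a linear expression in these quotients with unit coefficients.

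\textbf{Cases (1) and (2).} Here $b\equiv c\equiv 0 \bmod 3$ and $a\equiv d\equiv \varepsilon \bmod 3$ with $\varepsilon=\pm1$.

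In case (1) I write
\begin{equation*}
f(M)=3c+(a+d)\tfrac{c}{3}+4d\,\tfrac{b}{3}.
\end{equation*}
Since $a+d\equiv 2\varepsilon\equiv-\varepsilon$ and $4d\equiv\varepsilon \bmod 3$, this gives
\begin{equation*}
f(M)\equiv \varepsilon\left(\tfrac{b}{3}-\tfrac{c}{3}\right) \bmod 3,
\end{equation*}
which vanishes iff $b/3\equiv c/3 \bmod 3$.

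In case (2) the term $3(d-1)$ drops out, so
\begin{equation*}
f(M)\equiv d\cdot\tfrac{b-c}{3} \bmod 3.
\end{equation*}
Because $d\not\equiv0 \bmod 3$, the same condition $b/3\equiv c/3$ results.

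\textbf{Cases (3) and (4).} Here $a\equiv d\equiv 0 \bmod 3$ and $c\equiv-b\not\equiv 0 \bmod 3$.

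In case (3), $3(c+2)\equiv 0$, so
\begin{equation*}
f(M)\equiv c\cdot\tfrac{a+d}{3} \bmod 3.
\end{equation*}
As $c$ is a unit mod $3$, this vanishes iff $a/3\equiv-d/3$.

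In case (4), I write the terms as $(b-c)\cdot\frac d3$ and $4c\cdot\frac a3$. Since $b-c\equiv-2c\equiv c$ and $4c\equiv c \bmod 3$, this gives
\begin{equation*}
f(M)\equiv c\left(\tfrac a3+\tfrac d3\right) \bmod 3,
\end{equation*}
with the same conclusion.

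\textbf{Enumeration mod $9$.} The conditions $b/3\equiv c/3$ and $a/3\equiv -d/3 \bmod 3$ are conditions on $M \bmod 9$, so I then enumerate residues mod $9$.

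In cases (1) and (2), $bc\equiv 0 \bmod 9$, so $ad\equiv1 \bmod 9$. Up to the overall sign $\pm$ we may take $a\in\{1,4,-2\}$ and $d\equiv a^{-1}$, giving $(a,d)=(1,1),(4,-2),(-2,4)$. Then $b=c\in\{0,3,-3\}$, which produces the nine matrices (each with $\pm$) listed.

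In cases (3) and (4), $ad\equiv 0 \bmod 9$, so $bc\equiv-1 \bmod 9$. Up to sign, $c$ ranges over $\{1,4,-2\}$ with $b\equiv -c^{-1}$, i.e.\ $(b,c)=(-1,1),(2,4),(-4,-2)$. Then $a/3=-d/3\in\{0,1,-1\}$, which gives the listed matrices.

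\textbf{Main obstacle.} The only delicate step is bookkeeping. I must check that each fractional term really is an integer times a quotient like $c/3$, so that reduction mod $3$ is legitimate. I must also verify the $\pm$ normalization and the determinant condition mod $9$ carefully, so the lists are complete and contain no duplicates. The parity of $c$ plays no role, which explains why the lists in cases (1) and (2), and in cases (3) and (4), coincide.
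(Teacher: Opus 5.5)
Your proposal is correct and follows the same route as the paper. You reduce $f(M)$ modulo $3$ using the mod-$3$ shape of $M$, which gives a linear condition on $b/3,c/3$ (resp.\ $a/3,d/3$), and then you enumerate residues mod $9$ via $ad\equiv 1$ (resp.\ $bc\equiv -1$) mod $9$. The differences are only cosmetic: you handle the sign uniformly with $\varepsilon$ rather than splitting into $M\equiv I$ and $M\equiv -I$, and you write out cases (2)--(4), which the paper dismisses as analogous.
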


\begin{proof}
We treat case (1). The other cases can be proved in the same way. 
\par
From the definition of $f(M),$ 
it follows that 
\begin{align*}
f(M) \equiv 0 \bmod{3} 
\Longleftrightarrow &\,
3c+\frac13(a+d)c+\frac43bd 
\equiv
\frac13(a+d)c+\frac13bd
\equiv 0 \bmod{3}  \\
\Longleftrightarrow &\,
\frac13(a+d)c\equiv -\frac13bd  \bmod{3}.
\end{align*}
\par
If 
$M\equiv 
\begin{pmatrix}
1 & 0 \\
0 & 1
\end{pmatrix} 
\bmod{3},$ 
we have 
\begin{align*}
\frac13(a+d)c\equiv -\frac13bd  \bmod{3} 
\Longleftrightarrow &\,
\frac23 c\equiv -\frac13 b \bmod{3} \\
\Longleftrightarrow &\,
\frac{b}{3} \equiv \frac{c}{3} \equiv 0, \pm1 \bmod{3} \\
\Longleftrightarrow &\,
(b,c)\equiv (0,0), (3,3), (-3,-3) \bmod{9}.
\end{align*}
Since $ad-bc=1,$ it follows that $ad\equiv 1 \bmod{9},$ 
which implies that 
\begin{equation*}
(a,d)\equiv (1,1), (4,-2), (-2,4) \bmod{9}.
\end{equation*}
\par
If 
$M\equiv 
\begin{pmatrix}
-1 & 0 \\
0 & -1
\end{pmatrix} 
\bmod{3},$ 
we have 
\begin{align*}
\frac13(a+d)c\equiv -\frac13bd  \bmod{3} 
\Longleftrightarrow &\,
-\frac23 c\equiv \frac13 b \bmod{3} \\
\Longleftrightarrow &\,
\frac{b}{3} \equiv \frac{c}{3} \equiv 0, \pm1 \bmod{3} \\
\Longleftrightarrow &\,
(b,c)\equiv (0,0), (3,3), (-3,-3) \bmod{9}.
\end{align*}
Since $ad-bc=1,$ it follows that $ad\equiv 1 \bmod{9},$ 
which implies that 
\begin{equation*}
(a,d)\equiv (-1,-1), (2,-4), (-4,2) \bmod{9}.
\end{equation*}
\end{proof}

\begin{theorem}
\label{thmk=4-level3}
{\it
Suppose that 
$k\equiv \pm 4 \bmod{12}$. 
Then, it follows that 
\begin{align*}
\Ker \nu_{F^{k}}
=&
\left\{
\begin{pmatrix}
a & b \\
c & d
\end{pmatrix} 
\in\Gamma_{\theta,3} \,\,\Big| \,\,
\frac{b}{3} \equiv\frac{c}{3} \bmod{3}  \, (b,c\in3\mathbb{Z}) \,\,
\mathrm{or} \,\,
\frac{a}{3}\equiv-\frac{d}{3} \bmod{3} \, (a,d\in3\mathbb{Z})
\right\} \\
=&
\Big\{
M
\in
\Gamma_{\theta,3} \,\Big| \, 
M \equiv 
\pm
\begin{pmatrix}
1 & 0 \\
0 & 1
\end{pmatrix}, 
\pm
\begin{pmatrix}
1 & 3 \\
3 & 1
\end{pmatrix}, 
\pm
\begin{pmatrix}
1 & -3 \\
-3 & 1
\end{pmatrix},   \\
&\hspace{31mm}
\pm
\begin{pmatrix}
4 & 0 \\
0 & -2
\end{pmatrix},  
\pm
\begin{pmatrix}
4 & 3 \\
3 & -2
\end{pmatrix}, 
\pm
\begin{pmatrix}
4 & -3 \\
-3 & -2
\end{pmatrix}, \notag \\
&\hspace{31mm}
\pm
\begin{pmatrix}
-2 & 0 \\
0 & 4
\end{pmatrix}, 
\pm
\begin{pmatrix}
-2 & 3 \\
3 & 4
\end{pmatrix}, 
\pm
\begin{pmatrix}
-2 & -3 \\
-3 & 4
\end{pmatrix},   \\
&\hspace{31mm}
\pm
\begin{pmatrix}
0 & -1 \\
1 & 0
\end{pmatrix}, 
\pm
\begin{pmatrix}
0 & 2 \\
4 & 0
\end{pmatrix}, 
\pm
\begin{pmatrix}
0 & -4 \\
-2 & 0
\end{pmatrix},  \notag \\
&\hspace{31mm}
\pm
\begin{pmatrix}
3 & -1 \\
1 & -3
\end{pmatrix},  
\pm
\begin{pmatrix}
3 & 2 \\
4 & -3
\end{pmatrix}, 
\pm
\begin{pmatrix}
3 & -4 \\
-2 & -3
\end{pmatrix}, \notag \\
&\hspace{31mm}
\pm
\begin{pmatrix}
-3 & -1 \\
1 & 3
\end{pmatrix}, 
\pm
\begin{pmatrix}
-3 & 2 \\
4 & 3
\end{pmatrix},  
\pm
\begin{pmatrix}
-3 & -4 \\
-2 & 3
\end{pmatrix} 
\bmod{9}
\Big\}.
\end{align*}
Moreover, 
as a coset decomposition of $\Gamma_{\theta,3}$ modulo $\Ker \nu_{F^{k}}$,  
we may choose $S^{n}$ $(n=0,3,6),$ where 
$
S=
\begin{pmatrix}
1 & 1 \\
0 & 1
\end{pmatrix}. 
$
}
\end{theorem}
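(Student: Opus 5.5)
For $k\equiv \pm4 \bmod{12}$ we have $\frac{\pi i k}{6}=\pm\frac{2\pi i}{3}+2\pi i m$ for some integer $m$. Since $f(M)\in\frac13\mathbb{Z}$, the extra factor $\exp\{2\pi i m f(M)\}$ need not be trivial, so we should look at $f$ more closely. On the two branches with $c$ even, $f(M)\equiv 0 \bmod 3$ forces $b/3,c/3$ (resp. $a/3,d/3$) to be integers, and then $f(M)\in\mathbb{Z}$. On the two branches with $c$ odd, $f(M)$ is an integer exactly when $(a+d)c+4bd\equiv 0 \bmod 3$, and this holds automatically: on the first branch $c\equiv b\equiv 0 \bmod 3$, and on the third branch $a\equiv d\equiv 0 \bmod 3$, i.e. we are in $\Gamma_{\theta,3}$. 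So in fact $f(M)\in\mathbb{Z}$ for every $M\in\Gamma_{\theta,3}$, and I will check this first from the Definition of $f$. Granting it, we have $\nu_{F^k}(M)=\exp\{\pm\frac{2\pi i}{3}f(M)\}$, and $M\in\Ker\nu_{F^k}$ if and only if $f(M)\equiv 0 \bmod 3$.

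Next I invoke Lemma \ref{lem:level3-mod3}, whose four cases together cover $\Gamma_{\theta,3}$. For $M\equiv\pm I \bmod 3$ (cases (1),(2)), the condition $f(M)\equiv 0 \bmod 3$ is equivalent to $b/3\equiv c/3 \bmod 3$. For $M\equiv\pm T \bmod 3$ (cases (3),(4)), it is equivalent to $a/3\equiv -d/3 \bmod 3$. Whether $b,c\in 3\mathbb{Z}$ or $a,d\in 3\mathbb{Z}$ tells us which congruence class mod $3$ we are in, so the union of these conditions gives the first description of the kernel. The mod-$9$ list is the union of the lists in the lemma; cases (1),(2) and cases (3),(4) produce the same residue lists, so the parity of $c$ plays no role.

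For the coset decomposition, I compute $\nu_{F^k}(S^n)$, where $S^n=\begin{pmatrix}1&n\\0&1\end{pmatrix}$ lies in $\Gamma_{\theta,3}$ exactly when $3\mid n$. Here $c=0$ is even and $S^n\equiv I \bmod 3$, so the second branch gives $f(S^n)=\frac13 n$ and $\nu_{F^k}(S^n)=\exp\{\pm\frac{2\pi i}{3}\cdot\frac n3\}$. For $n=0,3,6$ this takes the three values $1,\omega^{\pm1},\omega^{\pm2}$, where $\omega=e^{2\pi i/3}$. Since $\nu_{F^k}$ is a character (Proposition \ref{prop:character-level3}) with values in the cube roots of unity, its image has order exactly $3$. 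So $[\Gamma_{\theta,3}:\Ker\nu_{F^k}]=3$, and $S^0,S^3,S^6$ represent the distinct cosets.

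The main obstacle is the first step. I need to confirm integrality of $f$ on all four branches, so that the extra factor $\exp\{2\pi i m f(M)\}$ is trivial and does not change the kernel. I also need to check that the case split in Lemma \ref{lem:level3-mod3} (mod $3$ class together with the parity of $c$) matches the case split in the Definition of $f$.
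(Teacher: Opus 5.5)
Your proposal is correct and follows the paper's proof. You reduce $\nu_{F^k}$ to $\exp\{\pm\frac{2\pi i}{3}f(M)\}$, read off the kernel from Lemma \ref{lem:level3-mod3}, and separate the cosets by the distinct values $\nu_{F^k}(S^n)=\exp\{\pm\frac{2\pi i}{3}\cdot\frac n3\}$ for $n=0,3,6$. Your extra check that $f(M)\in\mathbb{Z}$ on all of $\Gamma_{\theta,3}$ supplies the justification the paper leaves implicit for that reduction. One small wording slip: on the even-$c$ branches, what makes $b/3,c/3$ (resp.\ $a/3,d/3$) integers is $M\equiv\pm I$ (resp.\ $M\equiv\pm T$) $\bmod 3$, not the condition $f(M)\equiv 0 \bmod 3$.
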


\begin{proof}
For each $M\in\Gamma_{\theta,3},$ 
we have 
\begin{equation*}
\nu_{F^k}(M)
=
\exp
\left\{
\pm
\frac{2\pi i}{3}
\left[
f(M)
\right]
\right\}. 
\end{equation*}
The theorem follows from Lmma \ref{lem:level3-mod3}.  
\par
The coset decomposition can be obtained by considering the values of $\nu_{F^{k}  }(M).$
\end{proof}

\subsection{The case where $k\equiv \pm 2 \bmod{12}$}

\begin{theorem}
\label{thmk=2-level3}
{\it
Suppose that 
$k\equiv \pm2 \bmod{12}$. 
Then, it follows that 
\begin{align*}
\Ker \nu_{F^{k}}
=
\Ker 
\nu_{ F^{6} }
\cap
\Ker \nu_{ F^4 }. 
\end{align*}
Moreover, 
as a coset decomposition of $\Gamma_{\theta,3}$ modulo $\Ker \nu_{F^{k}}$,  
we may choose 
\begin{equation*}
S^{n}  \,\, 
(n=0,3,6,9,12,15),  \,\,
S=
\begin{pmatrix}
1 & 1 \\
0 & 1
\end{pmatrix}. 
\end{equation*}
}
\end{theorem}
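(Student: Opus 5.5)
Since $\nu_{F^k}(M)=\exp\{\pi i k f(M)/6\}$, the plan is to reduce everything to congruences on $f(M)$. For $k\equiv \pm 2 \bmod{12}$ we have $k/2\equiv \pm1 \bmod 6$, so
\begin{equation*}
\nu_{F^{k}}(M)=\exp\left\{\pm\frac{\pi i}{3} f(M)\right\}.
\end{equation*}
Hence $M\in\Ker\nu_{F^k}$ if and only if $f(M)\equiv 0 \bmod{6}$. (This needs $f(M)$ to be an integer; it is, because in each case of the definition the fractional parts combine into integers: $b,c\in 3\mathbb{Z}$ or $a,d\in 3\mathbb{Z}$, according to the residue of $M$ mod $3$.) Similarly, $\nu_{F^6}(M)=\exp\{\pi i f(M)\}$ and $\nu_{F^4}(M)=\exp\{\pm\frac{2\pi i}{3}f(M)\}$. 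So $M\in \Ker\nu_{F^6}$ if and only if $f(M)\equiv 0\bmod 2$, and $M\in\Ker\nu_{F^4}$ if and only if $f(M)\equiv 0 \bmod 3$; these are exactly the conditions used in Theorem \ref{thmk=6-level3} and Theorem \ref{thmk=4-level3} (via Lemma \ref{lem:level3-mod3}). Since $\gcd(2,3)=1$, $f(M)\equiv0\bmod6$ is equivalent to the conjunction of the two conditions, which gives $\Ker\nu_{F^k}=\Ker\nu_{F^6}\cap\Ker\nu_{F^4}$.

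Alternatively, and more structurally: $F^6=(F^k)^{3}\cdot F^{6-3k}$ with $12\mid 6-3k$ or the like would also give the inclusions, but the congruence argument is cleaner and I would use it.

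For the coset decomposition, $\nu_{F^k}$ is a character of $\Gamma_{\theta,3}$ (Proposition \ref{prop:character-level3}) with values among the sixth roots of unity. I will check that its image is all of $\mu_6$, so $\Gamma_{\theta,3}/\Ker\nu_{F^k}\cong \mu_6$ has order $6$. The key point is that $S^3=\begin{pmatrix}1&3\\0&1\end{pmatrix}\in\Gamma_{\theta,3}$ (it is $\equiv I\bmod 3$, with $c=0$ even). By the second case of the summary theorem, $f(S^3)=3\cdot 0+\frac13\cdot 3\cdot 1=1$; more generally $f(S^{3m})=m$. Thus $\nu_{F^k}(S^{3m})=e^{\pm\pi i m/3}$, and these take all six distinct values for $m=0,\dots,5$. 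Since cosets of the kernel correspond bijectively to values of the character, the matrices $S^{n}$, $n=0,3,\dots,15$, form a complete system of coset representatives.

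The main obstacle is only bookkeeping: confirming that $f(M)$ is integral in all four cases, so that ``$\bmod 6$'' splits into ``$\bmod 2$'' and ``$\bmod 3$'', and double-checking the value $f(S^{3m})=m$ from the $c$ even, $M\equiv \pm I$ formula.
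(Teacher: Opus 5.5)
Your proposal is correct and follows the paper's own route. You reduce $M\in\Ker\nu_{F^k}$ to $f(M)\equiv 0 \bmod 6$, then split this into the mod $2$ and mod $3$ conditions that characterize $\Ker\nu_{F^6}$ and $\Ker\nu_{F^4}$ (Theorems \ref{thmk=6-level3} and \ref{thmk=4-level3}). Two of your steps fill in what the paper leaves implicit when it says ``considering the values of $\nu_{F^k}(M)$'': the explicit check that $f(M)$ is an integer, and the computation $f(S^{3m})=m$, which shows that $\nu_{F^k}(S^{3m})=e^{\pm\pi i m/3}$ runs through all sixth roots of unity.
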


\begin{proof}
For each 
$
M
=
\begin{pmatrix}
a & b \\
c & d
\end{pmatrix}
\in
\Gamma_{\theta,3}, 
$ 
we have 
\begin{equation*}
\nu_{F^{k}  }(M)=
\exp
\left\{
\pm
\frac{
 \pi i
}
{3} 
f(M) 
\right\},
\end{equation*}
which implies that 
\begin{equation*}
M
\in
\Ker \nu_{F^{k}}
\Longleftrightarrow 
f(M)\equiv 0 \bmod{6}
\Longleftrightarrow 
f(M)\equiv 0 \bmod{2} \,\,
\mathrm{and} \,\,
f(M)\equiv 0 \bmod{3}. 
\end{equation*}
The theorem follows from Theorems \ref{thmk=6-level3} and \ref{thmk=4-level3}.
\par
The coset decomposition can be obtained by considering the values of $\nu_{F^{k}  }(M).$
\end{proof}

\subsection{The case where $k\equiv \pm 1, \pm 5 \bmod{12}$}

\begin{theorem}
\label{thmk=1and5}
{\it
Suppose that 
$k\equiv \pm1, \pm 5 \bmod{12}$. 
Then, it follows that 
\begin{align*}
\Ker \nu_{F^{k}}
=
\Ker 
\nu_{ F^{3} }
\cap
\Ker \nu_{ F^4 }. 
\end{align*}
Moreover, 
as a coset decomposition of $\Gamma_{\theta,3}$ modulo $\Ker \nu_{F^{k}}$,  
we may choose 
\begin{equation*}
S^{n}  \,\, 
(n=0,3,6,9,12,15,18,21,24,27,30,33),  \,\,
S=
\begin{pmatrix}
1 & 1 \\
0 & 1
\end{pmatrix}. 
\end{equation*}
}
\end{theorem}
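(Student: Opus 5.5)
For $k\equiv \pm1,\pm5 \bmod{12}$ we have $\gcd(k,12)=1$, and I will reduce everything to the earlier cases. By the formula following the definition of $f(M)$, $\nu_{F^k}(M)=\exp\{\pi i k f(M)/6\}$. Every value of $f(M)$ lies in $\frac13\mathbb{Z}$, so I first check that $\nu_F$ takes values in the twelfth roots of unity, i.e.\ that $f(M)\in\mathbb{Z}$ for all $M\in\Gamma_{\theta,3}$. In the first case, $c\equiv 0 \bmod 3$, so $\frac13(a+d)c\in\mathbb{Z}$. Also $b\equiv -c\equiv 0 \bmod 3$, so $\frac43 bd\in\mathbb{Z}$. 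The remaining three cases are checked the same way: in cases 3 and 4, $a\equiv d\equiv 0 \bmod 3$, and in case 2, $b,c\in 3\mathbb{Z}$. Granting this integrality, $\nu_{F^k}(M)=1$ if and only if $k f(M)\equiv 0 \bmod{12}$. Since $k$ is a unit modulo $12$, this is equivalent to $f(M)\equiv 0 \bmod{12}$, and by the Chinese remainder theorem to
\[
f(M)\equiv 0 \bmod 4 \quad\text{and}\quad f(M)\equiv 0 \bmod 3 .
\]

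Next I identify these two conditions with the earlier kernels. For $k'\equiv\pm3 \bmod{12}$ we have $\nu_{F^{k'}}(M)=\exp\{\pm \pi i f(M)/2\}$, so by Theorem \ref{thmk=3-level3} and Lemma \ref{lem:level3-mod4}, $\Ker\nu_{F^3}=\{M: f(M)\equiv 0 \bmod 4\}$. Similarly, Theorem \ref{thmk=4-level3} and Lemma \ref{lem:level3-mod3} give $\Ker\nu_{F^4}=\{M : f(M)\equiv 0\bmod 3\}$. Intersecting these yields $\Ker\nu_{F^k}=\Ker\nu_{F^3}\cap\Ker\nu_{F^4}$.

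For the coset decomposition I use that $\nu_{F^k}$ is a character with values in $\mu_{12}$, so $\Gamma_{\theta,3}/\Ker\nu_{F^k}$ is isomorphic to its image. I will show this image is all of $\mu_{12}$ and that it is generated by $\nu_{F^k}(S^3)$. Note $S^3=\begin{pmatrix}1&3\\0&1\end{pmatrix}\in\Gamma_{\theta,3}$, and $S^3\equiv I \bmod 3$ with $c=0$ even. The case-2 formula then gives $f(S^3)=3(1-1)+\frac13\cdot 3\cdot 1=1$. Hence $\nu_{F^k}(S^{3m})=e^{\pi i k m/6}$, and these values are pairwise distinct for $m=0,\dots,11$ because $\gcd(k,12)=1$. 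So the cosets $S^{3m}\Ker\nu_{F^k}$ are distinct and exhaust $\Gamma_{\theta,3}$, which gives the representatives $S^n$ with $n=0,3,\dots,33$.

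The main obstacle is the first step: confirming that $f(M)$ is genuinely an integer in all four cases, so that the reduction to residues modulo $4$ and $3$ is legitimate. Once that is settled, everything else follows formally from the earlier theorems and the single evaluation $f(S^3)=1$.
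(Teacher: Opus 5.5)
Your proof is correct and takes the same route as the paper: $\gcd(k,12)=1$ reduces $\nu_{F^k}(M)=1$ to $f(M)\equiv 0 \bmod{12}$, and the Chinese remainder theorem splits this into the mod-$4$ and mod-$3$ conditions that define $\Ker\nu_{F^3}$ and $\Ker\nu_{F^4}$. You also make explicit two points the paper leaves tacit: that $f(M)\in\mathbb{Z}$ in all four cases, and that $f(S^3)=1$, so $\nu_{F^k}(S^3)$ is a primitive twelfth root of unity, which justifies the representatives $S^{3m}$, $0\le m\le 11$.
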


\begin{proof}
For each 
$
M
=
\begin{pmatrix}
a & b \\
c & d
\end{pmatrix}
\in
\Gamma_{\theta,3}, 
$ 
we have 
\begin{equation*}
\nu_{ F^{k}  }(M)=
\exp
\left\{
\pm
\frac{
k
 \pi i
}
{6} 
f(M) 
\right\}, \,\,
(k,6)=1, 
\end{equation*}
which implies that 
\begin{equation*}
M
\in
\Ker \nu_{F^{k}}
\Longleftrightarrow 
f(M)\equiv 0 \bmod{12}
\Longleftrightarrow 
f(M)\equiv 0 \bmod{4} \,\,
\mathrm{and} \,\,
f(M)\equiv 0 \bmod{3}. 
\end{equation*}
The theorem follows from Theorems \ref{thmk=3-level3} and \ref{thmk=4-level3}.
\par
The coset decomposition can be obtained by considering the values of $\nu_{ F^{k}  }(M).$
\end{proof}

\section{Modular form on $\Gamma_{\theta,4}$ }
\label{sec:level4}

We first note 
\begin{equation*}
\Gamma_{\theta,4}
=
\left\{
\begin{pmatrix}
a & b \\
c & d
\end{pmatrix}
\in
\Gamma(1) \,\Big| \,
\begin{pmatrix}
a & b \\
c & d
\end{pmatrix}
\equiv 
\pm
\begin{pmatrix}
1 & 0 \\
0 & 1
\end{pmatrix}, 
\pm
\begin{pmatrix}
1 & 2 \\
2 & 1
\end{pmatrix}, 
\pm
\begin{pmatrix}
0 & -1 \\
1 & 0
\end{pmatrix} 
\pm
\begin{pmatrix}
2 & -1 \\
1 & 2
\end{pmatrix} 
\bmod{4}
\right\}.
\end{equation*}

\begin{proposition}
\label{prop:character-level4}
{\it
For every $\tau\in\mathscr{H},$
set 
\begin{equation*}
G(\tau)= \eta \left(\frac{\tau-1}{4} \right) \eta\left(\frac{\tau+1}{4} \right).
\end{equation*}
The multiplier system $\nu_{G}$ is a character. 
}
\end{proposition}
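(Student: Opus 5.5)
We follow the pattern of the proof of Proposition \ref{prop:character-level3}. Take $M=\begin{pmatrix} a & b\\ c & d\end{pmatrix}\in\Gamma_{\theta,4}$. The plan is to write $(M\tau\mp1)/4$ as $M_j$ applied to $(\tau\pm1)/4$ for suitable $M_j\in\Gamma(1)$. The eta transformation law then gives $G(M\tau)=\nu_\eta(M_1)\nu_\eta(M_2)(c\tau+d)G(\tau)$. The weight is the integer $1$, so $\nu_G$ is a character, as remarked in the introduction.

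The classification of $\Gamma_{\theta,4}$ modulo $4$ splits into two cases.

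\textbf{First case.} Suppose $M\equiv\pm I,\ \pm\begin{pmatrix}1&2\\2&1\end{pmatrix}\bmod 4$, so that $a\equiv d\equiv\pm1$ and $b\equiv c\equiv 0\bmod 2$ with $b\equiv -c\bmod 4$. We try
\begin{equation*}
M_1=\begin{pmatrix} a-c & \frac{b-d+a-c}{4}\\ 4c & d+c\end{pmatrix},\qquad
M_2=\begin{pmatrix} a+c & \frac{b+d-a-c}{4}\\ 4c & d-c\end{pmatrix}.
\end{equation*}
With these choices we have $M_1\bigl(\frac{\tau-1}{4}\bigr)=\frac{M\tau-1}{4}$ and $M_2\bigl(\frac{\tau+1}{4}\bigr)=\frac{M\tau+1}{4}$.

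This is checked directly, as in level 3. The numerator of $M_1\bigl(\frac{\tau-1}{4}\bigr)$ is $(a-c)(\tau-1)/4+(b-d+a-c)/4=\bigl((a-c)\tau+b-d\bigr)/4$. Its denominator is $c(\tau-1)+d+c=c\tau+d$, and $(a\tau+b)-(c\tau+d)=(a-c)\tau+b-d$, so the identity holds. The determinants are
\begin{equation*}
(a-c)(d+c)-c(b-d+a-c)=ad-bc=1,
\end{equation*}
and similarly for $M_2$. The factors of automorphy combine as
\begin{equation*}
\Bigl(4c\cdot\tfrac{\tau\mp1}{4}+d\pm c\Bigr)^{1/2}=(c\tau+d)^{1/2},
\end{equation*}
so their product is $c\tau+d$.

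\textbf{Second case.} Suppose $M\equiv\pm\begin{pmatrix}0&-1\\1&0\end{pmatrix},\ \pm\begin{pmatrix}2&-1\\1&2\end{pmatrix}\bmod 4$. As in the level-3 proof, we swap roles: $\frac{M\tau-1}{4}=M_1\bigl(\frac{\tau+1}{4}\bigr)$ and $\frac{M\tau+1}{4}=M_2\bigl(\frac{\tau-1}{4}\bigr)$. We try
\begin{equation*}
M_1=\begin{pmatrix} a-c & \frac{b-d-a+c}{4}\\ 4c & d-c\end{pmatrix},\qquad
M_2=\begin{pmatrix} a+c & \frac{b+d+a+c}{4}\\ 4c & d+c\end{pmatrix}.
\end{equation*}
The same determinant and automorphy-factor computations apply.

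\textbf{Main obstacle.} The only genuine check is integrality of the upper-right entries, which needs $b-d\pm(a-c)\equiv 0\bmod 4$ in each case. This is exactly where the definition of $\Gamma_{\theta,4}$ enters. Since $a\equiv d$ and $b\equiv -c\bmod 4$, in the first case
\begin{equation*}
b-d+a-c\equiv -2c\bmod 4,\qquad b+d-a-c\equiv -2c\bmod 4,
\end{equation*}
and $c$ is even there, so both vanish mod $4$. In the second case
\begin{equation*}
b-d-a+c\equiv -2a\bmod 4,\qquad b+d+a+c\equiv 2a\bmod 4,
\end{equation*}
and $a$ is even there, so both again vanish mod $4$.

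I will verify this against each of the four residue classes listed for $\Gamma_{\theta,4}$. This congruence bookkeeping is the one place the argument can fail; everything else is formal.
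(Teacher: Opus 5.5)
Your proposal is correct and follows the paper's proof essentially verbatim: the same two residue-class cases, the same matrices $M_1, M_2$, the combination of the two automorphy factors into $c\tau+d$, and the appeal to integer weight to conclude that $\nu_G$ is a character. You additionally check that the upper-right entries of $M_1, M_2$ are integers and that $\det M_j=1$, both of which the paper leaves implicit.
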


\begin{proof}
Set 
$
M=
\begin{pmatrix}
a & b \\
c & d
\end{pmatrix}
\in 
\Gamma_{\theta,4}. 
$ 
We note that $\nu_{G}$ is a character if the weight $r$ is a rational integer.  
\par
We first suppose that 
$
M
\equiv \pm
\begin{pmatrix}
1 & 0 \\
0 & 1
\end{pmatrix}, 
\pm
\begin{pmatrix}
1 & 2 \\
2 & 1
\end{pmatrix}
\bmod{4},$ 
which implies that  
\begin{align*}
F(M\tau)=&
\eta \left(\frac{ M \tau-1}{4} \right) \eta\left(\frac{ M\tau+1}{4} \right)
=
\eta \left( M_1 \left( \frac{  \tau-1}{4} \right)  \right) \eta\left( M_2  \left( \frac{ \tau+1}{4} \right)  \right) \\
=&
\nu_{\eta}(M_1) 
\left(
4c \cdot 
\frac{\tau-1}{4}
+(d+c)
\right)^{\frac12}
\eta \left(\frac{\tau-1}{4} \right)
\nu_{\eta} (M_2)
\left(
4c \cdot 
\frac{\tau+1}{4}
+(d-c)
\right)^{\frac12}
\eta\left(\frac{\tau+1}{4} \right)  \\
=&
\nu_{\eta}(M_1) \nu_{\eta} (M_2)
(c\tau+d)G(\tau),
\end{align*}
where 
\begin{equation*}
M_1
=
\begin{pmatrix}
a-c & \frac{ b-d+a-c }{4} \\
4c & d+c
\end{pmatrix}, 
M_2
=
\begin{pmatrix}
a+c & \frac{ b+d-a-c }{4} \\
4c & d-c
\end{pmatrix}. 
\end{equation*}
\par
We next assume that 
$
M
\equiv \pm
\begin{pmatrix}
0 & -1 \\
1 & 0
\end{pmatrix},
\pm
\begin{pmatrix}
2 & -1 \\
1 & 2
\end{pmatrix} 
\bmod{4},
$ 
which implies that  
\begin{align*}
G(M\tau)=&
\eta \left(\frac{ M \tau-1}{4} \right) \eta\left(\frac{ M\tau+1}{4} \right)
=
\eta \left( M_1 \left( \frac{  \tau+1}{4} \right)  \right) \eta\left( M_2  \left( \frac{ \tau-1}{4} \right)  \right) \\
=&
\nu_{\eta}(M_1) 
\left(
4c \cdot 
\frac{\tau+1}{4}
+(d-c)
\right)^{\frac12}
\eta \left(\frac{\tau+1}{4} \right) 
\nu_{\eta} (M_2)
\left(
4c \cdot 
\frac{\tau-1}{4}
+(d+c)
\right)^{\frac12}
\eta\left(\frac{\tau-1}{4} \right)  \\
=&
\nu_{\eta}(M_1) \nu_{\eta} (M_2)
(c\tau+d)G(\tau),
\end{align*}
where 
\begin{equation*}
M_1
=
\begin{pmatrix}
a-c & \frac{ b-d-a+c }{4} \\
4c & d-c
\end{pmatrix}, 
M_2
=
\begin{pmatrix}
a+c & \frac{ b+d+a+c }{4} \\
4c & d+c
\end{pmatrix}. 
\end{equation*}
\end{proof}

\subsection{The case where 
$
M
\equiv \pm
\begin{pmatrix}
0 & -1 \\
1 & 0
\end{pmatrix}, 
\pm
\begin{pmatrix}
2 & -1 \\
1 & 2
\end{pmatrix}
\bmod{4}
$
}

\begin{lemma}
\label{lem:(c/d)^*}
{\it
Suppose that $c$ is odd and $d$ is even and 
$(c,d)=1.$ 
Then, 
we have 
\begin{equation*}
\left(
\frac{c}{d-c}
\right)_{*}
\left(
\frac{c}{d+c}
\right)_{*}
=
(-1)^{
\frac{c-1}{2}
}. 
\end{equation*}
}
\end{lemma}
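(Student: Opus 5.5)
The plan is to evaluate both symbols with Jacobi reciprocity. First treat $c>0$, then reduce $c<0$ to that case. Set $m_{\pm}=|d\pm c|$ and $\varepsilon_{\pm}=\sign(d\pm c)$, so that $d\pm c=\varepsilon_{\pm}m_{\pm}$. Since $c$ is odd and $d$ is even, $m_{\pm}$ are positive odd integers coprime to $c$, so all symbols involved are defined. The one elementary fact used repeatedly is: for odd $m>0$ and $\varepsilon=\pm1$,
\begin{equation*}
\frac{\varepsilon m-1}{2}\equiv \frac{m-1}{2}+\frac{\varepsilon-1}{2} \bmod{2}.
\end{equation*}
Adding the two instances with $\varepsilon_{\pm}m_{\pm}=d\pm c$ gives $(d-c-1)/2+(d+c-1)/2=d-1$, which is odd because $d$ is even. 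This parity is what produces the final sign.

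\textbf{Case $c>0$.} Here $\sign c=1$, so $\left(\frac{c}{d\pm c}\right)_{*}=\left(\frac{c}{m_{\pm}}\right)$. By reciprocity,
\begin{equation*}
\left(\frac{c}{m_{\pm}}\right)=\left(\frac{m_{\pm}}{c}\right)(-1)^{\frac{c-1}{2}\frac{m_{\pm}-1}{2}}.
\end{equation*}
Next, $m_{\pm}=\varepsilon_{\pm}(d\pm c)\equiv \varepsilon_{\pm}d \bmod{c}$, so $\left(\frac{m_{\pm}}{c}\right)=\left(\frac{\varepsilon_{\pm}}{c}\right)\left(\frac{d}{c}\right)$. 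Moreover $\left(\frac{\varepsilon_{\pm}}{c}\right)=(-1)^{\frac{c-1}{2}\frac{\varepsilon_{\pm}-1}{2}}$. Multiplying over $\pm$, the factor $\left(\frac{d}{c}\right)^2=1$ drops out. The remaining exponent is $\frac{c-1}{2}$ times
\begin{equation*}
\sum_{\pm}\left[\frac{m_{\pm}-1}{2}+\frac{\varepsilon_{\pm}-1}{2}\right]\equiv d-1\equiv 1 \bmod{2},
\end{equation*}
so the product equals $(-1)^{\frac{c-1}{2}}$.

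\textbf{Case $c<0$.} Write $c=-|c|$. Then
\begin{equation*}
\left(\frac{c}{d\pm c}\right)_{*}=\left(\frac{-1}{m_{\pm}}\right)\left(\frac{|c|}{m_{\pm}}\right)(-1)^{\frac{\varepsilon_{\pm}-1}{2}},
\end{equation*}
because $\frac{\sign c-1}{2}=-1$. Since $\left(\frac{-1}{m_{\pm}}\right)=(-1)^{\frac{m_{\pm}-1}{2}}$, the parity fact shows that the product over $\pm$ of the first and last factors is $(-1)^{d-1}=-1$. The numbers $m_{\pm}$ are exactly $|d\mp|c||$, the same pair as for $|c|$. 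So the positive case applied to $|c|$ gives $\prod_{\pm}\left(\frac{|c|}{m_{\pm}}\right)=(-1)^{\frac{|c|-1}{2}}$. The total is therefore $-(-1)^{\frac{|c|-1}{2}}=(-1)^{\frac{|c|+1}{2}}=(-1)^{\frac{c-1}{2}}$, where the last equality holds because $\frac{c-1}{2}=-\frac{|c|+1}{2}$.

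The main care point is the sign bookkeeping, namely tracking $\varepsilon_{\pm}$ through both reciprocity and the $(\cdot)_{*}$ correction factor. The single parity identity above handles this uniformly in both cases.
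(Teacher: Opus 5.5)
Your proof is correct, but you organize the computation differently from the paper.

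The paper first merges the two symbols into the single Jacobi symbol $\left(\frac{c}{|d^2-c^2|}\right)$. It notes that the $(\cdot)_{*}$ correction factor equals $-1$ exactly when $c<0$ and $|d|<|c|$, and from this rewrites the product as $\sign c\left(\frac{|c|}{|d^2-c^2|}\right)$. It then applies reciprocity once, using $|d^2-c^2|\equiv \pm d^2 \pmod{|c|}$ together with the residue of $|d^2-c^2|$ modulo $4$. Both of these facts depend on whether $|d|<|c|$ or $|d|>|c|$, a case split the paper carries out only implicitly.

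You instead apply reciprocity to each factor separately. Reducing $m_{\pm}\equiv\varepsilon_{\pm}d \pmod{c}$ makes $\left(\frac{d}{c}\right)^2$ cancel. All the sign bookkeeping is then absorbed into your one parity identity, whose sum over $\pm$ is the odd number $d-1$. As a result you never need to compare $|d|$ with $|c|$, and the case $c<0$ reduces cleanly to $c>0$.

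The paper's route is shorter to write, since it uses a single reciprocity step. Yours is more uniform and makes every sign explicit, which matters here because the paper's sign handling is quite compressed.
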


\begin{proof}
By the definition, 
we have 
\begin{equation*}
\left(
\frac{c}{d-c}
\right)_{*}
\left(
\frac{c}{d+c}
\right)_{*}
=
\left(
\frac{c}{ |d-c| }
\right)
\left(
\frac{c}{ |d+c| }
\right)
(-1)^{
\frac{\sign c -1}{2}
\cdot
\frac{\sign (d-c) -\sign(d+c)}{2}
}. 
\end{equation*}
We note that 
\begin{equation*}
(-1)^{
\frac{\sign c -1}{2}
\cdot
\frac{\sign (d-c) -\sign(d+c)}{2}
} 
=
-1
\Longleftrightarrow 
c<0 \,\, 
 \mathrm{and} 
 \,\, 
 |d|<|c|,
\end{equation*}
which implies that 
\begin{align*}
\left(
\frac{c}{d-c}
\right)_{*}
\left(
\frac{c}{d+c}
\right)_{*}
=&
\sign c
\left(
\frac{ |c| }{|d^2-c^2|}
\right)
=
\sign c
\left(
\frac {|d^2-c^2|}{ |c| }
\right)
(-1)^
{
\frac{ |c|-1}{2}
\cdot
\frac{ |d^2-c^2|-1  }{2}
}  \\
=&
\sign c 
\cdot
(-1)^{
\frac{|c|-1}{2}
}
=
(-1)^{
\frac{c-1}{2}
}. 
\end{align*}
\end{proof}

\begin{theorem}
\label{thm:M:c-odd-level4}
{\it
Suppose that 
$
M=
\begin{pmatrix}
a & b \\
c & d
\end{pmatrix}
\in 
\Gamma_{\theta,4}
$ 
and 
$M\equiv 
\pm
\begin{pmatrix}
0 & -1 \\
1 & 0
\end{pmatrix}, 
 \pm
\begin{pmatrix}
2 & -1 \\
1 & 2
\end{pmatrix}
\bmod{4}. 
$ 
Then, 
the multiplier system 
$\nu_{G}$ of 
$
\displaystyle 
G=
\eta \left(\frac{\tau-1}{4} \right) \eta\left(\frac{\tau+1}{4} \right)
$ 
is given by 
\begin{equation*}
\nu_{G}
(M)
=
\exp
\left\{
\frac{\pi i}{6}
\left[
3(c+d-2)+\frac14(a+d)c
+
\frac14(b+c)d-cd(1+cd)
\right]
\right\}. 
\end{equation*}
In particular, we have $\nu_{G}(T)=-i,$ where 
$
T=
\begin{pmatrix}
0 & -1 \\
1 & 0
\end{pmatrix}.  
$
}
\end{theorem}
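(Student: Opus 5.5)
The plan is to compute $\nu_G(M)$ directly from the second half of the proof of Proposition \ref{prop:character-level4}, which gives $\nu_G(M)=\nu_\eta(M_1)\nu_\eta(M_2)$ with
\begin{equation*}
M_1=\begin{pmatrix} a-c & \frac{b-d-a+c}{4} \\ 4c & d-c\end{pmatrix},\qquad
M_2=\begin{pmatrix} a+c & \frac{b+d+a+c}{4} \\ 4c & d+c\end{pmatrix}.
\end{equation*}
Since $M\equiv\pm T,\pm\begin{pmatrix}2&-1\\1&2\end{pmatrix}\bmod 4$, we have $c$ odd and $d$ even. The lower-left entry $4c$ of $M_1,M_2$ is even, so I will use the ``$c$ even'' branch of (\ref{eqn-eta-multiplier}) for both factors.

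\textbf{Step 1: the symbol part.} The symbol part of $\nu_\eta(M_1)\nu_\eta(M_2)$ is $\left(\frac{4c}{d-c}\right)_*\left(\frac{4c}{d+c}\right)_*$. Since $\left(\frac{4}{|n|}\right)=1$ and $\sign(4c)=\sign c$, this equals $\left(\frac{c}{d-c}\right)_*\left(\frac{c}{d+c}\right)_*$. By Lemma \ref{lem:(c/d)^*} this is $(-1)^{(c-1)/2}=\exp\{\frac{\pi i}{12}\cdot 6(c-1)\}$.

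\textbf{Step 2: the exponent.} Substitute the entries of $M_1$ and $M_2$ into $(a'+d')c'-b'd'(c'^2-1)+3d'-3-3c'd'$ and add the two results. This gives a polynomial $E$ in $a,b,c,d$ with denominators dividing $4$, and then
\begin{equation*}
\nu_G(M)=\exp\Big\{\frac{\pi i}{12}\big(E+6(c-1)\big)\Big\}.
\end{equation*}
As in Theorems \ref{thm:M=pmI-c-odd} and \ref{thm:M=pmT-c-odd}, the remaining task is to show
\begin{equation*}
E+6(c-1)\equiv 2\Big[3(c+d-2)+\tfrac14(a+d)c+\tfrac14(b+c)d-cd(1+cd)\Big]\pmod{24}.
\end{equation*}

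\textbf{Step 3: the reduction mod $24$ (main obstacle).} The fractional terms make the reduction delicate, because an integer congruence mod $24$ multiplied by $\frac14$ only survives mod $6$. So before reducing, I would first isolate the fractional parts exactly, writing them as $\frac14(a+d)c$, $\frac14(b+c)d$, and similar expressions, each times a known coefficient. The products $(b-d\mp(a-c))(d\mp c)$ from the $b'd'$ terms are where this bookkeeping matters.

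On the integral polynomial part, I will then use:
\begin{itemize}
\item $c^2\equiv 1\bmod 8$, since $c$ is odd;
\item $d^2\equiv 0\bmod 4$, since $d$ is even;
\item $16c^2-1\equiv 15\bmod 24$, since $c^2\equiv 1\bmod 3$ unless $3\mid c$; I would handle the case $3\mid c$ separately, or absorb it using that the $b'd'$ coefficient is a multiple of $16c^2-1$;
\item the determinant relation $ad-bc=1$, to trade $bc$ for $ad-1$.
\end{itemize}
The $-cd(1+cd)$ term should come from the $c^2d^2$-type contributions that do not collapse.

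\textbf{Step 4: the special value.} Plugging $T$, i.e.\ $(a,b,c,d)=(0,-1,1,0)$, into the formula gives bracket $=3(1+0-2)=-3$. Hence $\nu_G(T)=e^{-\pi i/2}=-i$, which proves the last claim.
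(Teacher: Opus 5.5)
Your Steps 1, 2 and 4 follow the paper's own argument: the same $M_1,M_2$, the same reduction of the symbol to $(-1)^{(c-1)/2}$ via the preceding lemma, and the same $E$. Your explicit $+6(c-1)$ is what turns the paper's $3(d-1)$ into $3(c+d-2)$.

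The gap is Step 3. You never carry it out, and the congruence you set as its target is false. When you add the two exponents, the terms in $c^2,d^2,ad,bc$ cancel exactly, since $(b-d-a+c)(d-c)+(a+b+c+d)(c+d)=2(ac+bd+2cd)$. Hence
\begin{equation*}
E=-6+6d+8ac-16cd-\tfrac12(16c^2-1)(ac+bd+2cd).
\end{equation*}
This is an integer because $a$ and $d$ are even, so no fractional bookkeeping is needed. There are also no surviving $c^2d^2$ terms, so that cannot be the source of the last term. Use $24\mid c(c^2-1)$, $24\mid 6bd(c^2-1)$ and $24\mid 6(b+c)d$ (as $4\mid b+c$ and $2\mid d$), and keep $c^2$ inside $bc^2d$; then no case split on $3\mid c$ is needed. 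You get
\begin{equation*}
E\equiv -6+6d+\tfrac12ac+\tfrac12bd-cd-2bc^2d
=2\Big[3(d-1)+\tfrac14(a+d)c+\tfrac14(b+c)d-cd(1+bc)\Big]\pmod{24}.
\end{equation*}

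So the term that actually comes out is $-cd(1+bc)$. This is what the paper's proof ends with, and what Theorem \ref{summary-level4} and the definition of $g(M)$ use. The $-cd(1+cd)$ in the statement is a misprint, and it is not harmless: the two brackets differ by $c^2d(b-d)$, which need not vanish mod $12$.

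For instance, take $M=\begin{pmatrix}2&3\\1&2\end{pmatrix}\in\Gamma_{\theta,4}$. Then $M_1=\begin{pmatrix}1&0\\4&1\end{pmatrix}$ and $M_2=\begin{pmatrix}3&2\\4&3\end{pmatrix}$, with $\nu_\eta(M_1)=e^{-\pi i/3}$ and $\nu_\eta(M_2)=1$, so $\nu_G(M)=e^{-\pi i/3}$. The displayed formula with $-cd(1+cd)$ gives bracket $0$, i.e.\ value $1$.

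Your method, like the paper's, therefore proves the corrected formula, not the one you set out to match. The value $\nu_G(T)=-i$ is unaffected, because $d=0$ there.
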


\begin{proof}
We first note that 
\begin{align*}
F(M\tau)=&
\nu_{\eta}(M_1) \nu_{\eta} (M_2)
(c\tau+d)G(\tau),
\end{align*}
where 
\begin{equation*}
M_1
=
\begin{pmatrix}
a-c & \frac{ b-d-a+c }{4} \\
4c & d-c
\end{pmatrix}, 
M_2
=
\begin{pmatrix}
a+c & \frac{ b+d+a+c }{4} \\
4c & d+c
\end{pmatrix}. 
\end{equation*}
Equation (\ref{eqn-eta-multiplier})
yields 
\begin{align*}
\nu_{\eta}(M_1)=&
\left(
\frac{4c}{d-c}
\right)_{*} 
\exp
\Big\{
\frac{\pi i}{12}
\Big[
-4 a c^3+4 a c^2 d+4 b c^3-4 b c^2 d+4 c^4-8 c^3 d+4 c^2 d^2-3 c+3 d-3  \\
&\hspace{40mm}
+\frac{17 a c}{4}-\frac{a d}{4}-\frac{b c}{4}+\frac{b d}{4}+\frac{15 c^2}{4}-\frac{15 c d}{2}-\frac{d^2}{4}
\Big]
\Big\},
\end{align*}
and 
\begin{align*}
\nu_{\eta}(M_2)=&
\left(
\frac{4c}{d+c}
\right)_{*}  
\exp 
\Big\{
\frac{\pi i}{12}
\Big[
-4 a c^3-4 a c^2 d-4 b c^3-4 b c^2 d-4 c^4-8 c^3 d-4 c^2 d^2+3 c+3 d-3 \\
&\hspace{40mm}
+\frac{17 a c}{4}+\frac{a d}{4}+\frac{b c}{4}+\frac{b d}{4}-\frac{15 c^2}{4}-\frac{15 c d}{2}+\frac{d^2}{4}
\Big]
\Big\},
\end{align*}
which implies that 
\begin{equation*}
\nu_{F}
=
\nu_{\eta}(M_1) \nu_{\eta} (M_2)
=
\left(
\frac{c}{d-c}
\right)_{*}  
\left(
\frac{c}{d+c}
\right)_{*}  
\exp
\left\{
\frac{\pi i}{12}
E
\right\}, 
\end{equation*}
where 
\begin{equation*}
E=
-6+\frac{17 a c}{2}-8 a c^3+6 d+\frac{b d}{2}-15 c d-8 b c^2 d-16 c^3 d. 
\end{equation*}
\par
Since $c(c^2-1)\equiv0 \bmod{3}$ and $c$ is odd, 
it follows that 
$c^2\equiv 1 \bmod{8},$ 
which implies 
\begin{align*}
E=&
-6+8ac(1-c^2)+\frac12ac+6d+\frac12bd-16cd(c^2-1)-31cd-8bc^2d & \\
\equiv & \,
-6+\frac12ac+6d+\frac12bd-31cd-8bc^2d
& &\bmod{24} \,  \\
\equiv &\, 
-6+\frac12ac+6d+\frac12bd-7cd-6bc^2d-2bc^2d &  &\bmod{24} \, \\
\equiv &
\, 
-6+\frac12ac+6d+\frac12bd-cd-6(b+c)d-2bc^2d
&  &\bmod{24}  \\
\equiv &\,
-6+\frac12ac+6d+\frac12bd-cd-2bc^2d  &  &\bmod{24} \,  \\
\equiv & \,
2
\left[
3(d-1)+\frac14(a+d)c+\frac14(b+c)d-cd(1+bc)
\right]    &  &\bmod{24},
\end{align*}
which proves the theorem. 
\end{proof}

\subsection{The case where 
$
M
\equiv \pm
\begin{pmatrix}
1 & 0 \\
0 & 1
\end{pmatrix}, 
\pm
 \begin{pmatrix}
1 & 2 \\
2 & 1
\end{pmatrix}
\bmod{4}
$ 
}

\begin{theorem}
\label{thm:M:c-even-level4}
{\it
Suppose that 
$
M=
\begin{pmatrix}
a & b \\
c & d
\end{pmatrix}
\in 
\Gamma_{\theta,4}
$ 
and 
$M\equiv 
 \pm
\begin{pmatrix}
1 & 0 \\
0 & 1
\end{pmatrix}, 
\pm
 \begin{pmatrix}
1 & 2 \\
2 & 1
\end{pmatrix}
\bmod{4}.
$ 
Then, 
the multiplier system 
$\nu_{G}$ of 
$
\displaystyle 
G=
\eta \left(\frac{\tau-1}{4} \right) \eta\left(\frac{\tau+1}{4} \right)
$ 
is given by 
\begin{equation*}
\nu_{G}
(M)
=
\exp
\left\{
\frac{\pi i}{6}
\left[
3(d-c-1)+\frac14(b-c)d
+\frac14(a-d)c+cd(1-ad)
\right]
\right\}. 
\end{equation*}
}
\end{theorem}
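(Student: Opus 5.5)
The plan is to mimic the proofs of Theorems \ref{thm:M=pmI-c-even} and \ref{thm:M=pmT-c-even}. I will reduce to the already-computed odd-$c$ case by right multiplication with $T$, and not compute $\nu_\eta(M_1)\nu_\eta(M_2)$ directly. If $M\equiv\pm I,\pm\begin{pmatrix}1&2\\2&1\end{pmatrix}\bmod 4$, then $c$ is even and $d$ is odd. Moreover
\begin{equation*}
MT=\begin{pmatrix} b & -a\\ d & -c\end{pmatrix}
\equiv \pm\begin{pmatrix}0&-1\\1&0\end{pmatrix},\ \pm\begin{pmatrix}2&-1\\1&2\end{pmatrix}\bmod 4 ,
\end{equation*}
so $MT$ lies in the class treated by Theorem \ref{thm:M:c-odd-level4}, with lower-left entry $d$ odd.

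By Proposition \ref{prop:character-level4}, $\nu_G$ is a character on $\Gamma_{\theta,4}$, since the weight is $1$. Hence
\begin{equation*}
\nu_G(M)=\nu_G(MT)\,\nu_G(T)^{-1}=i\,\nu_G(MT),
\end{equation*}
using $\nu_G(T)=-i$ from Theorem \ref{thm:M:c-odd-level4}. I now substitute $(a,b,c,d)\mapsto(b,-a,d,-c)$ into the formula of Theorem \ref{thm:M:c-odd-level4}. Its bracket becomes
\begin{equation*}
3(d-c-2)+\tfrac14(b-c)d+\tfrac14(d-a)(-c)+cd(1-cd).
\end{equation*}
Multiplying by $i=\exp\{\frac{\pi i}{6}\cdot 3\}$ adds $3$ to the bracket, giving $3(d-c-1)+\frac14(b-c)d+\frac14(a-d)c+cd(1-cd)$.

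It remains to compare this with the stated bracket, which ends in $cd(1-ad)$ instead of $cd(1-cd)$. The two brackets differ by $cd(a-c)d=cd^2(a-c)$, and I must show this is $\equiv 0\bmod 12$. Since $c$ is even and $a$ is odd, $a-c$ is odd, so the factor of $4$ and the factor of $3$ have to be checked separately.

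This congruence bookkeeping is the main obstacle. The earlier proof of Theorem \ref{thm:M:c-odd-level4} itself slipped between $cd(1+cd)$ and $cd(1+bc)$, so I will first recheck which form of its last term is correct modulo $12$. To do so I will use $ad-bc=1$ together with the mod-$4$ congruences defining $\Gamma_{\theta,4}$: $b\equiv -c\bmod 4$, $a\equiv d\bmod 4$, and $c\equiv 0$ or $2\bmod 4$. For the factor $3$, I will use $x^3\equiv x\bmod 3$ and $d^2\equiv 1\bmod 3$ when $3\nmid d$, and handle the cases $3\mid c$ and $3\mid d$ separately. If the residual term genuinely fails to vanish modulo $12$, the fallback is to compute $\nu_\eta(M_1)\nu_\eta(M_2)$ directly from the even-$c$ branch of (\ref{eqn-eta-multiplier}), with $4c$ in the lower-left entry. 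In that computation the symbols $\left(\frac{4c}{d\pm c}\right)_*$ combine as in Lemma \ref{lem:(c/d)^*}, and I would reduce the exponent modulo $24$ exactly as in the proof of Theorem \ref{thm:M:c-odd-level4}.
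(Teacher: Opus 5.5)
Your reduction is the same as the paper's: $MT$ lands in the odd-$c$ class, $\nu_G$ is a character, $\nu_G(T)=-i$, and you substitute $(a,b,c,d)\mapsto(b,-a,d,-c)$. You also correctly spotted the inconsistency in Theorem \ref{thm:M:c-odd-level4}. But the proposal stops at the decisive step, and the route you commit to there cannot work. Plugging in the printed last term $-cd(1+cd)$ leaves the residual $cd^2(a-c)$, and this is \emph{not} $\equiv 0 \pmod{12}$. If $M\equiv\pm\begin{pmatrix}1&2\\2&1\end{pmatrix}\pmod 4$, then $c\equiv 2\pmod 4$ while $d$ and $a-c$ are odd, so $cd^2(a-c)\equiv 2\pmod 4$. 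For instance, $M=\begin{pmatrix}1&2\\2&5\end{pmatrix}$ gives your bracket $-86\equiv 10\pmod{12}$. A direct computation with $M_1=\begin{pmatrix}-1&-1\\8&7\end{pmatrix}$ and $M_2=\begin{pmatrix}3&1\\8&3\end{pmatrix}$ gives $\nu_G(M)=e^{\pi i/4}e^{-\pi i/4}=1$, which matches the stated bracket $-36\equiv 0$. So no congruence bookkeeping with $ad-bc=1$ and the mod-$4$ conditions will close this. The printed form of Theorem \ref{thm:M:c-odd-level4} is simply false at $MT=\begin{pmatrix}2&-1\\5&-2\end{pmatrix}$, and in fact on the whole class $\pm\begin{pmatrix}2&-1\\1&2\end{pmatrix}\pmod 4$.

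The missing step is to use the odd-$c$ formula in its correct form, the one in Theorem \ref{summary-level4}: $3(c+d-2)+\frac14(a+d)c+\frac14(b+c)d-cd(1+bc)$. The $bc$ version is what the computation in the proof of Theorem \ref{thm:M:c-odd-level4} actually yields; note the term $-2bc^2d$ in its penultimate congruence. The term $3(c+d-2)$, rather than the $3(d-1)$ of that proof's last line, incorporates the factor $(-1)^{(c-1)/2}$ from Lemma \ref{lem:(c/d)^*}. Under your substitution, $-cd(1+bc)$ becomes $-d(-c)(1-ad)=cd(1-ad)$ exactly. Adding $3$ for the factor $i$ then gives the stated bracket with no residual at all, which is the paper's proof.

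Your fallback is also not ready as written. Here $c$ is even and $d$ is odd, so Lemma \ref{lem:(c/d)^*} does not apply. One needs instead $\left(\frac{c}{d+c}\right)_*\left(\frac{c}{d-c}\right)_*=(-1)^{c/2}$, and this sign is essential when $c\equiv 2\pmod 4$.
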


\begin{proof}
By 
$
T
=
\begin{pmatrix}
0 & -1 \\
1 & 0
\end{pmatrix}, 
$ 
we have 
\begin{equation*}
MT
=
\begin{pmatrix}
a & b \\
c & d
\end{pmatrix}
\begin{pmatrix}
0 & -1 \\
1 & 0
\end{pmatrix}
=
\begin{pmatrix}
b & -a \\
d & -c
\end{pmatrix}
\equiv
\pm
\begin{pmatrix}
0 & -1 \\
1 & 0
\end{pmatrix},
\pm
 \begin{pmatrix}
1 & 2 \\
2 & 1
\end{pmatrix}
\bmod{4}.
\end{equation*}
\par
Proposition \ref{prop:character-level4} and Theorem \ref{thm:M:c-odd-level4}
imply that 
\begin{align*}
\nu_{F}(MT)=&\nu_{F}(M)\nu_{F}(T)=\nu_{F}(M)\cdot(-i)  \\
=&
\exp
\left\{
\frac{\pi i}{6}
\left[
3(d-c-2)+\frac14(b-c)d+\frac14(a-d)c+cd(1-ad)
\right]
\right\},
\end{align*}
which proves the theorem. 
\end{proof}

\subsection{Summary}

\begin{theorem}
\label{summary-level4}
{\it
Set
$
M=
\begin{pmatrix}
a & b \\
c & d
\end{pmatrix}
\in 
\Gamma_{\theta,4}. 
$ 
Then, it follows that 
the multiplier system 
$\nu_{G}$ of 
 $
\displaystyle 
G=
\eta \left(\frac{\tau-1}{4} \right) \eta\left(\frac{\tau+1}{4} \right) 
$ 
is given by 
\begin{equation*}
\nu_{G}(M)=
\begin{cases}
\exp
\left\{
\displaystyle
\frac{\pi i}{6}
\left[
3(c+d-2)+\frac14(a+d)c+\frac14(b+c)d-cd(1+bc)
\right]
\right\}   
&
\text
{ 
if
$c$ is odd,
} \vspace{1mm}  \\
\exp
\left\{
\displaystyle
\frac{\pi i}{6}
\left[
3(d-c-1)+\frac14(b-c)d+\frac14(a-d)c+cd(1-ad)
\right]
\right\}   
&
\text
{ 
if
$c$ is even. 
}  
\end{cases}
\end{equation*}
}
\end{theorem}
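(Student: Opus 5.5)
The statement is a summary of the two preceding results, so I will prove it by assembling them, with one short consistency check. For $M\in\Gamma_{\theta,4}$, the description of $\Gamma_{\theta,4}$ modulo $4$ splits the group into two classes. The first consists of $M\equiv\pm\begin{pmatrix}1&0\\0&1\end{pmatrix},\pm\begin{pmatrix}1&2\\2&1\end{pmatrix}\bmod 4$. In these matrices $c$ is even. The second consists of $M\equiv\pm\begin{pmatrix}0&-1\\1&0\end{pmatrix},\pm\begin{pmatrix}2&-1\\1&2\end{pmatrix}\bmod 4$, where $c$ is odd. Hence the parity of $c$ decides exactly which class $M$ lies in, and the case split in the summary is the same as the split used in Theorems \ref{thm:M:c-odd-level4} and \ref{thm:M:c-even-level4}.

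For $c$ even, I will take Theorem \ref{thm:M:c-even-level4} verbatim. For $c$ odd, I will start from Theorem \ref{thm:M:c-odd-level4}. Its proof actually ends with the exponent
\[
2\left[3(d-1)+\tfrac14(a+d)c+\tfrac14(b+c)d-cd(1+bc)\right] \pmod{24},
\]
while the statements carry $3(c+d-2)$ in place of $3(d-1)$, and $cd(1+cd)$ or $cd(1+bc)$ in the last term. So I must check that the summary's expression agrees with the derived one modulo $12$. This is what matters, since $\nu_G(M)=\exp\{\frac{\pi i}{6}[\cdot]\}$.

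The first discrepancy: $3(c+d-2)-3(d-1)=3(c-1)$, which is divisible by $6$ because $c$ is odd. It is therefore harmless up to sign, and I will need to verify that the sign is really trivial. The cleanest way is to recompute $E \bmod 24$ directly from the formula for $E$. Here I will use $c^2\equiv1 \bmod 8$ and $c^3\equiv c \bmod 3$, track the term $-6+6d$ carefully, and reduce $3(c-1)\bmod 12$ using $c\equiv\pm1\bmod 4$ together with the mod-$4$ shape of $M$. The second discrepancy, between $cd(1+bc)$ and $cd(1+cd)$, requires $c^2d^2\equiv bc^2d\pmod{12}$, i.e.\ $cd(b-d)\equiv 0\bmod 12$. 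I would check this using that $d$ is even and $b\equiv\pm1\bmod 4$ in this class.

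The main obstacle is this bookkeeping of additive constants modulo $12$: the exponent appears in several inconsistent forms across the theorems. My plan is to recompute $E\bmod 24$ once, independently, from $\nu_\eta(M_1)\nu_\eta(M_2)$. I will combine it with the Jacobi-symbol factor $(-1)^{(c-1)/2}$ from Lemma \ref{lem:(c/d)^*}, which contributes $\exp\{\frac{\pi i}{6}\cdot 3(c-1)\}$, and this is exactly the source of the $3(c+d-2)$ form. I will then confirm against the check value $\nu_G(T)=-i$ at $(a,b,c,d)=(0,-1,1,0)$, where the bracket equals $-3$.
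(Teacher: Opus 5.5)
Your main line is the paper's. The parity of $c$ separates the two classes mod $4$, and the statement is assembled from Theorems \ref{thm:M:c-odd-level4} and \ref{thm:M:c-even-level4}. Your plan of recomputing $E \bmod 24$ and multiplying by the Jacobi-symbol factor $(-1)^{(c-1)/2}$ gives exactly the summary's odd-$c$ exponent $3(c+d-2)+\frac14(a+d)c+\frac14(b+c)d-cd(1+bc)$. However, two of the reconciliation steps you announce are false and would fail if carried out.

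First, $\exp\{\frac{\pi i}{6}\cdot 3(c-1)\}=(-1)^{(c-1)/2}$ is not a trivial sign. Because of the $\pm$ in the description of $\Gamma_{\theta,4}$, both $c\equiv 1$ and $c\equiv -1 \bmod 4$ occur. For example, $\nu_G(-T)=\nu_G(-I)\nu_G(T)=(-1)(-i)=i$; the summary reproduces this (bracket $-9$), while $3(d-1)$ alone would give $-i$. So do not try to show this sign is trivial or to pin down $c \bmod 4$ from the shape of $M$. It is exactly the Jacobi-symbol factor, as you correctly say at the end.

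Second, the congruence $c^2d^2\equiv bc^2d \pmod{12}$ is false; note also that the difference is $c^2d(d-b)$, not $cd(b-d)$. For $M=\begin{pmatrix}2&3\\1&2\end{pmatrix}\in\Gamma_{\theta,4}$ this difference equals $-2$. Computing directly, $M_1=\begin{pmatrix}1&0\\4&1\end{pmatrix}$ and $M_2=\begin{pmatrix}3&2\\4&3\end{pmatrix}$, with $\nu_{\eta}(M_1)=e^{-\pi i/3}$ and $\nu_{\eta}(M_2)=1$, so $\nu_G(M)=e^{-\pi i/3}$. This agrees with the summary (bracket $-2$) and not with the $cd(1+cd)$ version (bracket $0$). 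So the $cd(1+cd)$ in the statement of Theorem \ref{thm:M:c-odd-level4} is an error, not an equivalent form. Since your recomputation of $E$ lands on $cd(1+bc)$ directly, this ``check'' should simply be deleted.

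For the same reason, derive the even case from your corrected odd-$c$ formula via $MT$, the character property and $\nu_G(T)=-i$. That is what the paper's proof of Theorem \ref{thm:M:c-even-level4} actually uses, rather than Theorem \ref{thm:M:c-odd-level4} as stated. With those two steps removed, your argument is complete.
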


\begin{proof}
The theorem follows from 
Theorems 
\ref{thm:M:c-odd-level4} 
and 
\ref{thm:M:c-even-level4}.
\end{proof}

\begin{definition}
For each 
$
M=
\begin{pmatrix}
a & b \\
c & d
\end{pmatrix}
\in 
\Gamma_{\theta,4},
$ 
we define 
\begin{equation*}
g(M)
=
\begin{cases}
\displaystyle
3(c+d-2)+\frac14(a+d)c+\frac14(b+c)d-cd(1+bc)
&
\text
{ 
if
$c$ is odd,
}  \vspace{1mm} \\
\displaystyle
3(d-c-1)+\frac14(b-c)d+\frac14(a-d)c+cd(1-ad)  
&
\text
{ 
if
$c$ is even. 
}  
\end{cases}
\end{equation*}
\end{definition}

\subsection{The case where $k\equiv 0 \bmod{12}$}

\begin{theorem}
{\it
Suppose that 
$k\equiv 0 \bmod{12}$. 
Then, it follows that 
\begin{equation*}
\Ker \nu_{G^{k}}
=\Gamma_{\theta,4}.
\end{equation*}
}
\end{theorem}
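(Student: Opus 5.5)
The plan is to show that $\nu_{G^{k}}(M)=1$ for every $M\in\Gamma_{\theta,4}$ when $12\mid k$. By Proposition \ref{prop:character-level4}, $\nu_G$ is a character of $\Gamma_{\theta,4}$, so $\nu_{G^k}=\nu_G^k$. Theorem \ref{summary-level4} and the definition of $g(M)$ then give
\begin{equation*}
\nu_{G^{k}}(M)=\exp\left\{\frac{\pi i k}{6}\, g(M)\right\}.
\end{equation*}
Writing $k=12m$, this becomes $\exp\{2\pi i m\, g(M)\}$. It therefore suffices to check that $g(M)$ is a rational integer for each $M\in\Gamma_{\theta,4}$.

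This is where the only work lies, since $g(M)$ contains the quarter-integer terms $\frac14(a+d)c+\frac14(b+c)d$ or $\frac14(b-c)d+\frac14(a-d)c$. I would use the congruence description of $\Gamma_{\theta,4}$ modulo $4$ given at the start of Section \ref{sec:level4}.

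In the $c$ odd case we have $M\equiv\pm\begin{pmatrix}0&-1\\1&0\end{pmatrix},\pm\begin{pmatrix}2&-1\\1&2\end{pmatrix}\bmod 4$. So $a\equiv d\bmod 4$ with $a,d$ even, and $b\equiv -c\bmod 4$. Hence $a+d\equiv 2a\equiv 0\bmod 4$ and $b+c\equiv 0\bmod 4$, and both quarter terms are integers.

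In the $c$ even case we have $M\equiv\pm I,\pm\begin{pmatrix}1&2\\2&1\end{pmatrix}\bmod 4$. Then $a\equiv d$ and $b\equiv -c\bmod 4$ give $a-d\equiv 0$. For $b-c$ we get $b-c\equiv -2c\equiv 0\bmod 4$, since $c$ is even. So again each term is an integer.

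The remaining terms, such as $3(\cdot)$ and $cd(1\pm\cdot)$, are visibly integers. Hence $g(M)\in\mathbb{Z}$, so $\nu_{G^k}(M)=1$ and $\Ker\nu_{G^k}=\Gamma_{\theta,4}$.

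The only point needing care is this divisibility by $4$. In fact the congruence $a\equiv d$, $b\equiv -c \pmod 4$ defining $\Gamma_{\theta,4}$ is exactly what makes it work, so I expect no real obstacle.
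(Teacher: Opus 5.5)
Your proof is correct and follows the paper's route. The paper just calls the result obvious, and that rests on $\nu_{G^k}(M)=\exp\{\frac{\pi i k}{6}g(M)\}$ together with $g(M)\in\mathbb{Z}$ on $\Gamma_{\theta,4}$; your check that the quarter-integer terms are integral, using $a\equiv d$, $b\equiv -c \pmod 4$ and the parity of $c$, supplies exactly the detail the paper leaves unstated.
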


\begin{proof}
It is obvious. 
\end{proof}

\begin{lemma}
\label{lem:level4-mod2}
{\it
Set 
$
M=
\begin{pmatrix}
a & b \\
c & d
\end{pmatrix}
\in\Gamma_{\theta,4}.
$
\newline
$\mathrm{(1)}$ If 
$
M
\equiv
\pm
\begin{pmatrix}
1 & 0 \\
0 & 1
\end{pmatrix}
\bmod{4},
$ 
we have 
\begin{align}
&g(M)\equiv 0 \bmod{2}  
\Longleftrightarrow
\, b-c \equiv 0 \bmod{8} \notag  \\
\Longleftrightarrow&
M
\equiv
\pm
\begin{pmatrix}
1 & 0 \\
0 & 1
\end{pmatrix}, 
\pm
\begin{pmatrix}
1 & 4 \\
4 & 1
\end{pmatrix}, 
\pm
\begin{pmatrix}
-3 & 0 \\
0 & -3
\end{pmatrix}, 
\pm
\begin{pmatrix}
-3 & 4 \\
4 & -3
\end{pmatrix}
\mod{8}.       \label{eqn:level4-mod2-pmI(1)}
\end{align}
$\mathrm{(2)}$ If 
$
M
\equiv
\pm
\begin{pmatrix}
1 & 2 \\
2 & 1
\end{pmatrix}
\bmod{4}
$, we have  
\begin{align}
 \label{eqn:level4-mod4-pmI(2)}
&g(M)\equiv 0 \bmod{2}  
\Longleftrightarrow
\, b-c\equiv 0 \bmod{8} \notag \\
\Longleftrightarrow& \, 
M
\equiv 
\pm
\begin{pmatrix}
1 & 2 \\
2 & -3
\end{pmatrix}, 
\pm
\begin{pmatrix}
1 & -2 \\
-2 & -3
\end{pmatrix}, 
\pm
\begin{pmatrix}
-3 & 2 \\
2 & 1
\end{pmatrix}, 
\pm
\begin{pmatrix}
-3 & -2 \\
-2 & 1
\end{pmatrix} 
\bmod{8}.
\end{align}
$\mathrm{(3)}$ If 
$
M
\equiv
\pm
\begin{pmatrix}
0 & -1 \\
1 & 0
\end{pmatrix}
\bmod{4}
$,  
we have 
\begin{align}
&g(M)\equiv 0 \bmod{2} 
\Longleftrightarrow
\, a+d\equiv 4 \bmod{8}  \notag \\
\Longleftrightarrow&
M
\equiv
\pm
\begin{pmatrix}
0 & -1 \\
1 & 4
\end{pmatrix}, 
\pm
\begin{pmatrix}
0 & 3 \\
-3 & 4
\end{pmatrix}, 
\pm
\begin{pmatrix}
4 & -1 \\
1 & 0
\end{pmatrix}, 
\pm
\begin{pmatrix}
4 & 3 \\
-3 & 0
\end{pmatrix}
 \bmod{8}.   \label{eqn:level4-mod4-pmT}
\end{align}
$\mathrm{(4)}$ If 
$
M
\equiv
\pm
\begin{pmatrix}
2 & -1 \\
1 & 2
\end{pmatrix}
\bmod{4}
$, we have  
\begin{align}
\label{eqn:level3-mod4-pmT-c-even}
&g(M)\equiv 0 \bmod{2} 
\Longleftrightarrow \,
a+d\equiv 4 \bmod{8} \notag \\
\Longleftrightarrow&
M
\equiv 
\pm
\begin{pmatrix}
2 & -1 \\
-3 & 2
\end{pmatrix}, 
\pm
\begin{pmatrix}
2 & 3 \\
1 & 2
\end{pmatrix}, 
\pm
\begin{pmatrix}
-2 & -1 \\
-3 & -2
\end{pmatrix}, 
\pm
\begin{pmatrix}
-2 & 3 \\
1 & -2
\end{pmatrix} 
\bmod{8}.
\end{align}
}
\end{lemma}

\begin{proof}
We treat case (1). The other cases can be proved in the same way. 
From the definition of $g(M),$ it follows that 
\begin{align*}
g(M)\equiv 0 \bmod{2} &\Longleftrightarrow 
3(d-c-1)+\frac14(b-c)d+\frac14(a-d)c+cd(1-ad)  \equiv 0 \bmod{2}   \\
&\Longleftrightarrow
\frac14(b-c)\equiv 0 \bmod{2}  
\Longleftrightarrow
b-c\equiv 0 \bmod{8}   \\
&\Longleftrightarrow
(b,c)\equiv (0,0), (4,4) \bmod{8}. 
\end{align*}
Since $ad-bc=1,$ it follows that 
$ad\equiv 1 \bmod{8},$ 
which implies that 
\begin{equation*}
(a,d)\equiv 
(1,1), (-3,-3), (-1,-1), (3,3) \bmod{8}.
\end{equation*}
\end{proof}

\begin{theorem}
\label{thmk=6-level4}
{\it
Suppose that 
$k\equiv 6 \bmod{12}$. 
Then, it follows that 
\begin{align*}
\Ker \nu_{G^{k}}
=&
\left\{
\begin{pmatrix}
a & b \\
c & d
\end{pmatrix}
\in
\Gamma_{\theta,4} \,\Big| \, 
b-c\equiv 0 \bmod{8} \, (b, c\in2 \mathbb{Z}) \,\mathrm{or} \,
a+d\equiv 4 \bmod{8}  \, (a,d\in2\mathbb{Z}) 
\right\}.  \\
=&
\Bigg\{
\begin{pmatrix}
a & b \\
c & d
\end{pmatrix}
\in
\Gamma_{\theta,4} \,\Big| \, 
\begin{pmatrix}
a & b \\
c & d
\end{pmatrix}
\equiv
\pm
\begin{pmatrix}
1 & 0 \\
0 & 1
\end{pmatrix}, 
\pm
\begin{pmatrix}
1 & 4 \\
4 & 1
\end{pmatrix}, 
\pm
\begin{pmatrix}
-3 & 0 \\
0 & -3
\end{pmatrix}, 
\pm
\begin{pmatrix}
-3 & 4 \\
4 & -3
\end{pmatrix},  \\
&\hspace{52mm}
\pm
\begin{pmatrix}
1 & 2 \\
2 & -3
\end{pmatrix}, 
\pm
\begin{pmatrix}
1 & -2 \\
-2 & -3
\end{pmatrix}, 
\pm
\begin{pmatrix}
-3 & 2 \\
2 & 1
\end{pmatrix}, 
\pm
\begin{pmatrix}
-3 & -2 \\
-2 & 1
\end{pmatrix}, \\
&\hspace{52mm}
\pm
\begin{pmatrix}
0 & -1 \\
1 & 4
\end{pmatrix}, 
\pm
\begin{pmatrix}
0 & 3 \\
-3 & 4
\end{pmatrix}, 
\pm
\begin{pmatrix}
4 & -1 \\
1 & 0
\end{pmatrix}, 
\pm
\begin{pmatrix}
4 & 3 \\
-3 & 0
\end{pmatrix},  \\
&\hspace{52mm}
\pm
\begin{pmatrix}
2 & -1 \\
-3 & 2
\end{pmatrix}, 
\pm
\begin{pmatrix}
2 & 3 \\
1 & 2
\end{pmatrix}, 
\pm
\begin{pmatrix}
-2 & -1 \\
-3 & -2
\end{pmatrix}, 
\pm
\begin{pmatrix}
-2 & 3 \\
1 & -2
\end{pmatrix} 
\bmod{8}
\Bigg\}.
\end{align*}
Moreover, 
as a coset decomposition of $\Gamma_{\theta,4}$ modulo $\Ker \nu_{G^{k}}$,  
we may choose $S^{n}$ $(n=0,4),$ where 
$
S=
\begin{pmatrix}
1 & 1 \\
0 & 1
\end{pmatrix}. 
$
}
\end{theorem}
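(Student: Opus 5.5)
Since $k\equiv 6 \bmod{12}$, write $k=6(2m+1)$. Then for every $M\in\Gamma_{\theta,4}$ we have
\begin{equation*}
\nu_{G^{k}}(M)=\exp\left\{\frac{\pi i k}{6}g(M)\right\}=\exp\{\pi i (2m+1) g(M)\}=\exp\{\pi i\, g(M)\}.
\end{equation*}
Here $g(M)$ is an integer because $\nu_G$ is a character, by Proposition \ref{prop:character-level4}. Strictly speaking, $\nu_G(M)$ only determines $g(M)$ modulo $12$. Every condition below is modulo a divisor of $12$, so this is harmless. Hence $M\in \Ker \nu_{G^{k}}$ if and only if $g(M)\equiv 0 \bmod{2}$. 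The kernel is therefore obtained by running through the four residue classes of $M$ modulo $4$ listed in the description of $\Gamma_{\theta,4}$ and applying Lemma \ref{lem:level4-mod2} in each.

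Concretely, the classes $\pm I$ and $\pm\begin{pmatrix}1&2\\2&1\end{pmatrix} \bmod 4$ are exactly those with $c$ even, and there $b,c\in 2\mathbb{Z}$. By parts (1) and (2) of Lemma \ref{lem:level4-mod2}, the condition becomes $b-c\equiv 0 \bmod 8$, which gives the first eight families of residues modulo $8$. The classes $\pm T$ and $\pm\begin{pmatrix}2&-1\\1&2\end{pmatrix} \bmod 4$ are those with $c$ odd, and there $a,d\in 2\mathbb{Z}$. By parts (3) and (4), the condition becomes $a+d\equiv 4 \bmod 8$, which gives the remaining eight families. Taking the union yields both descriptions of $\Ker\nu_{G^k}$ in the statement.

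Only case (1) of Lemma \ref{lem:level4-mod2} was proved in the paper, so I would verify the other parities directly. In case (3), take the odd-$c$ formula for $g$ and reduce it modulo $2$. With $c$ odd and $a,d$ even, the terms $3(c+d-2)$ and $cd(1+bc)$ are $\equiv 1$ and $\equiv 0$ respectively. The term $\frac14(b+c)d$ is an integer multiple of $d/2$. The term $\frac14(a+d)c$ has parity $\frac{a+d}{4}$ when $4\mid a+d$. This check of parity bookkeeping, including that $a+d\equiv 0\bmod 4$ is forced by $M\equiv \pm T$ or $\pm\begin{pmatrix}2&-1\\1&2\end{pmatrix} \bmod 4$, is the step I expect to be most delicate. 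The enumeration of residues modulo $8$ then uses $ad-bc=1$ exactly as in case (1).

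For the coset decomposition, $\nu_{G^k}$ takes only the values $\pm1$. The kernel thus has index at most $2$, and the quotient is identified via $\nu_{G^k}$ with $\{\pm1\}$. It suffices to exhibit $S^0=I$ with value $1$ and $S^4$ with value $-1$. Note that $S^4=\begin{pmatrix}1&4\\0&1\end{pmatrix}\equiv I \bmod 4$ lies in $\Gamma_{\theta,4}$. Here $c=0$ and $b-c=4\not\equiv 0 \bmod 8$, so $S^4\notin\Ker\nu_{G^k}$ by part (1). Hence $\{S^0,S^4\}$ is a complete set of coset representatives.
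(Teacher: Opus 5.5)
Your proposal is correct and takes the same route as the paper: $\nu_{G^k}(M)=e^{\pi i g(M)}$ reduces the kernel to the condition $g(M)\equiv 0 \bmod 2$, Lemma \ref{lem:level4-mod2} turns this into the stated congruences, and the two cosets come from $\nu_{G^k}$ taking the values $\pm1$ with $S^4$ outside the kernel. One small correction: the integrality of $g(M)$ does not follow from $\nu_G$ being a character. It follows from the defining congruences $a\equiv d$, $b\equiv -c \bmod 4$, which (together with the parity of the entries) make every $\tfrac14(\cdot)$ term in $g(M)$ an integer; in case (3) they also make $\tfrac14(b+c)d$ even, since $4\mid b+c$ and $d$ is even, and evenness is what your parity count actually needs.
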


\begin{proof}
For each $M\in\Gamma_{\theta,4},$ 
we have 
\begin{equation*}
\nu_{G^k}(M)=
\exp
\left\{
\pi i
\left[
g(M)
\right]
\right\}. 
\end{equation*}
The theorem follows from Lmma \ref{lem:level4-mod2}.  
\par
The coset decomposition can be obtained by considering the values of $\nu_{G^{k}  }(M).$
\end{proof}

\subsection{The case where $k\equiv \pm 3 \bmod{12}$}

\begin{lemma}
\label{lem:level4-mod4}
{\it
Set 
$
M=
\begin{pmatrix}
a & b \\
c & d
\end{pmatrix}
\in\Gamma_{\theta,4}.
$
\newline
$\mathrm{(1)}$ If 
$
M
\equiv
\pm
\begin{pmatrix}
1 & 0 \\
0 & 1
\end{pmatrix}
\bmod{4}
$, 
we have 
\begin{align}
&\hspace{10mm} g(M)\equiv 0 \bmod{4}  
\Longleftrightarrow
\, \frac{b-c}{4}\equiv -d+1 \bmod{4} \notag  \\
&\Longleftrightarrow
M
\equiv
\begin{pmatrix}
1 & 0 \\
0 & 1
\end{pmatrix}, 
\begin{pmatrix}
1 & 4 \\
4 & 1
\end{pmatrix}, 
\begin{pmatrix}
1 & -4 \\
-4 & 1
\end{pmatrix}, 
\begin{pmatrix}
1 & 8 \\
8 & 1
\end{pmatrix},  
\begin{pmatrix}
5 & 0 \\
0 & -3
\end{pmatrix}, 
\begin{pmatrix}
5 & 4 \\
4 & -3
\end{pmatrix}, 
\begin{pmatrix}
5 & -4 \\
-4 & -3
\end{pmatrix}, 
\begin{pmatrix}
5 & 8 \\
8 & -3
\end{pmatrix}, \notag  \\
&\hspace{10mm}
\begin{pmatrix}
-7 & 0 \\
0 & -7
\end{pmatrix}, 
\begin{pmatrix}
-7 & 4 \\
4 & -7
\end{pmatrix}, 
\begin{pmatrix}
-7 & -4 \\
-4 & -7
\end{pmatrix}, 
\begin{pmatrix}
-7 & 8 \\
8 & -7
\end{pmatrix},  
\begin{pmatrix}
-3 & 0 \\
0 & 5
\end{pmatrix}, 
\begin{pmatrix}
-3 & 4 \\
4 & 5
\end{pmatrix}, 
\begin{pmatrix}
-3 & -4 \\
-4 & 5
\end{pmatrix}, 
\begin{pmatrix}
-3 & 8 \\
8 & 5
\end{pmatrix},    \notag  \\
&\hspace{10mm}
\begin{pmatrix}
-1 & 0 \\
8 & -1
\end{pmatrix}, 
\begin{pmatrix}
-1 & 4 \\
-4 & -1
\end{pmatrix}, 
\begin{pmatrix}
-1 & 8 \\
0 & -1
\end{pmatrix}, 
\begin{pmatrix}
-1 & -4 \\
4 & -1
\end{pmatrix},   
\begin{pmatrix}
3 & 0 \\
8 & -5
\end{pmatrix}, 
\begin{pmatrix}
3 & 4 \\
-4 & -5
\end{pmatrix}, 
\begin{pmatrix}
3 & 8 \\
0 & -5
\end{pmatrix}, 
\begin{pmatrix}
3 & -4 \\
4 & -5
\end{pmatrix},   \notag \\
&\hspace{10mm}
\begin{pmatrix}
7 & 0 \\
8 & 7
\end{pmatrix}, 
\begin{pmatrix}
7 & 4 \\
-4 & 7
\end{pmatrix}, 
\begin{pmatrix}
7 & 8 \\
0 & 7
\end{pmatrix}, 
\begin{pmatrix}
7 & -4 \\
4 & 7
\end{pmatrix},   
\begin{pmatrix}
-5 & 0 \\
8 & 3
\end{pmatrix}, 
\begin{pmatrix}
-5 & 4 \\
-4 & 3
\end{pmatrix}, 
\begin{pmatrix}
-5 & 8 \\
0 & 3
\end{pmatrix}, 
\begin{pmatrix}
-5 & -4 \\
4 & 3
\end{pmatrix}  
\bmod{16}.  \notag
\end{align}
$\mathrm{(2)}$ If 
$
M
\equiv
\pm
\begin{pmatrix}
1 & 2 \\
2 & 1
\end{pmatrix}
\bmod{4},
$ 
we have  
\begin{align*}
&\hspace{10mm} g(M)\equiv 0 \bmod{4} 
\Longleftrightarrow
\, \frac{b-c}{4}\equiv d-1 \bmod{4} \notag \\
&\Longleftrightarrow \, 
M
\equiv 
\begin{pmatrix}
1 & 2 \\
2 & 5
\end{pmatrix}, 
\begin{pmatrix}
5 & 2 \\
2 & 1
\end{pmatrix}, 
\begin{pmatrix}
-7 & 2 \\
2 & -3
\end{pmatrix}, 
\begin{pmatrix}
-3 & 2 \\
2 & -7
\end{pmatrix}, 
\begin{pmatrix}
1 & 6 \\
6 & 5
\end{pmatrix}, 
\begin{pmatrix}
5 & 6 \\
6 & 1
\end{pmatrix}, 
\begin{pmatrix}
-7 & 6 \\
6 & -3
\end{pmatrix}, 
\begin{pmatrix}
-3 & 6 \\
6 & -7
\end{pmatrix},   \\
&\hspace{8mm}
\begin{pmatrix}
1 & -6 \\
-6 & 5
\end{pmatrix}, 
\begin{pmatrix}
5 & -6 \\
-6 & 1
\end{pmatrix}, 
\begin{pmatrix}
-7 & -6 \\
-6 & -3
\end{pmatrix}, 
\begin{pmatrix}
-3 & -6 \\
-6 & -7
\end{pmatrix},  
\begin{pmatrix}
1 & -2 \\
-2 & 5
\end{pmatrix}, 
\begin{pmatrix}
5 & -2 \\
-2 & 1
\end{pmatrix}, 
\begin{pmatrix}
-7 & -2 \\
-2 & -3
\end{pmatrix}, 
\begin{pmatrix}
-3 & -2 \\
-2 & -7
\end{pmatrix},  \\
&\hspace{8mm}
\begin{pmatrix}
3 & 2 \\
-6 & 7
\end{pmatrix}, 
\begin{pmatrix}
7 & 2 \\
-6 & 3
\end{pmatrix}, 
\begin{pmatrix}
-5 & 2 \\
-6 & -1
\end{pmatrix}, 
\begin{pmatrix}
-1 & 2 \\
-6 & -5
\end{pmatrix}, 
\begin{pmatrix}
3 & 6 \\
-2 & 7
\end{pmatrix}, 
\begin{pmatrix}
7 & 6 \\
-2 & 3
\end{pmatrix}, 
\begin{pmatrix}
-5 & 6 \\
-2 & -1
\end{pmatrix}, 
\begin{pmatrix}
-1 & 6 \\
-2 & -5
\end{pmatrix},   \\
&\hspace{8mm}
\begin{pmatrix}
3 & -6 \\
2 & 7
\end{pmatrix}, 
\begin{pmatrix}
7 & -6 \\
2 & 3
\end{pmatrix}, 
\begin{pmatrix}
-5 & -6 \\
2 & -1
\end{pmatrix}, 
\begin{pmatrix}
-1 & -6 \\
2 & -5
\end{pmatrix}, 
\begin{pmatrix}
3 & -2 \\
6 & 7
\end{pmatrix}, 
\begin{pmatrix}
7 & -2 \\
6 & 3
\end{pmatrix}, 
\begin{pmatrix}
-5 & -2 \\
6 & -1
\end{pmatrix}, 
\begin{pmatrix}
-1 & -2 \\
6 & -5
\end{pmatrix}
\bmod{16}.
\end{align*}
$\mathrm{(3)}$ If 
$
M
\equiv
\pm
\begin{pmatrix}
0 & -1 \\
1 & 0
\end{pmatrix}
\bmod{4}
$, 
we have 
\begin{align}
&\hspace{10mm}  g(M)\equiv 0 \bmod{4} 
\Longleftrightarrow
\, \frac{a+d}{4}\equiv -1 \bmod{4}  \notag \\
&\Longleftrightarrow
M
\equiv
\begin{pmatrix}
0 & 3 \\
5 & -4
\end{pmatrix}, 
\begin{pmatrix}
0 & 7 \\
-7 & -4
\end{pmatrix}, 
\begin{pmatrix}
0 & -5 \\
-3 & -4
\end{pmatrix}, 
\begin{pmatrix}
0 & -1 \\
1 & -4
\end{pmatrix},  
\begin{pmatrix}
4 & 3 \\
5 & 8
\end{pmatrix}, 
\begin{pmatrix}
4 & 7 \\
-7 & 8
\end{pmatrix}, 
\begin{pmatrix}
4 & -5 \\
-3 & 8
\end{pmatrix}, 
\begin{pmatrix}
4 & -1 \\
1 & 8
\end{pmatrix}, \notag \\
&\hspace{10mm}
\begin{pmatrix}
8 & 3 \\
5 & 4
\end{pmatrix}, 
\begin{pmatrix}
8 & 7 \\
-7 & 4
\end{pmatrix}, 
\begin{pmatrix}
8 & -5 \\
-3 & 4
\end{pmatrix}, 
\begin{pmatrix}
8 & -1 \\
1 & 4
\end{pmatrix}, 
\begin{pmatrix}
-4 & 3 \\
5 & 0
\end{pmatrix}, 
\begin{pmatrix}
-4 & 7 \\
-7 & 0
\end{pmatrix}, 
\begin{pmatrix}
-4 & -5 \\
-3 & 0
\end{pmatrix}, 
\begin{pmatrix}
-4 & -1 \\
1 & 0
\end{pmatrix},  \notag  \\
&\hspace{10mm}
\begin{pmatrix}
0 & 1 \\
-1 & -4
\end{pmatrix}, 
\begin{pmatrix}
0 & 5 \\
3 & -4
\end{pmatrix}, 
\begin{pmatrix}
0 & -7 \\
7 & -4
\end{pmatrix}, 
\begin{pmatrix}
0 & -3 \\
-5 & -4
\end{pmatrix},  
\begin{pmatrix}
4 & 1 \\
-1 & 8
\end{pmatrix}, 
\begin{pmatrix}
4 & 5 \\
3 & 8
\end{pmatrix}, 
\begin{pmatrix}
4 & -7 \\
7 & 8
\end{pmatrix}, 
\begin{pmatrix}
4 & -3 \\
-5 & 8
\end{pmatrix},  \notag \\  
&\hspace{10mm}
\begin{pmatrix}
8 & 1 \\
-1 & 4
\end{pmatrix}, 
\begin{pmatrix}
8 & 5 \\
3 & 4
\end{pmatrix}, 
\begin{pmatrix}
8 & -7 \\
7 & 4
\end{pmatrix}, 
\begin{pmatrix}
8 & -3 \\
-5 & 4
\end{pmatrix}, 
\begin{pmatrix}
-4 & 1 \\
-1 & 0
\end{pmatrix}, 
\begin{pmatrix}
-4 & 5 \\
3 & 0
\end{pmatrix}, 
\begin{pmatrix}
-4 & -7 \\
7 & 0
\end{pmatrix}, 
\begin{pmatrix}
-4 & -3 \\
-5 & 0
\end{pmatrix}
 \bmod{16}. \notag  
\end{align}
$\mathrm{(4)}$ If 
$
M
\equiv
\pm
\begin{pmatrix}
2 & -1 \\
1 & 2
\end{pmatrix}
\bmod{4}, 
$ 
we have  
\begin{align*}
&\hspace{10mm} g(M)\equiv 0 \bmod{4} 
\Longleftrightarrow \,
\frac{a+d}{4}\equiv -1 \bmod{4} \notag \\
&\Longleftrightarrow
M
\equiv 
\begin{pmatrix}
2 & 3 \\
1 & -6
\end{pmatrix}, 
\begin{pmatrix}
6 & 3 \\
1 & 6
\end{pmatrix}, 
\begin{pmatrix}
-6 & 3 \\
1 & 2
\end{pmatrix}, 
\begin{pmatrix}
-2 & 3 \\
1 & -2
\end{pmatrix}, 
\begin{pmatrix}
2 & 7 \\
5 & -6
\end{pmatrix}, 
\begin{pmatrix}
6 & 7 \\
5 & 6
\end{pmatrix}, 
\begin{pmatrix}
-6 & 7 \\
5 & 2
\end{pmatrix}, 
\begin{pmatrix}
-2 & 7 \\
5 & -2
\end{pmatrix},   \\
&\hspace{8mm}
\begin{pmatrix}
2 & -5 \\
-7 & -6
\end{pmatrix}, 
\begin{pmatrix}
6 & -5 \\
-7 & 6
\end{pmatrix}, 
\begin{pmatrix}
-6 & -5 \\
-7 & 2
\end{pmatrix}, 
\begin{pmatrix}
-2 & -5 \\
-7 & -2
\end{pmatrix}, 
\begin{pmatrix}
2 & -1 \\
-3 & -6
\end{pmatrix}, 
\begin{pmatrix}
6 & -1 \\
-3 & 6
\end{pmatrix}, 
\begin{pmatrix}
-6 & -1 \\
-3 & 2
\end{pmatrix}, 
\begin{pmatrix}
-2 & -1 \\
-3 & -2
\end{pmatrix},  \\
&\hspace{8mm}
\begin{pmatrix}
2 & -7 \\
-5 & -6
\end{pmatrix}, 
\begin{pmatrix}
6 & -7 \\
-5 & 6
\end{pmatrix}, 
\begin{pmatrix}
-6 & -7 \\
-5 & 2
\end{pmatrix}, 
\begin{pmatrix}
-2 & -7 \\
-5 & -2
\end{pmatrix},    
\begin{pmatrix}
2 & -3 \\
-1 & -6
\end{pmatrix}, 
\begin{pmatrix}
6 & -3 \\
-1 & 6
\end{pmatrix}, 
\begin{pmatrix}
-6 & -3 \\
-1 & 2
\end{pmatrix}, 
\begin{pmatrix}
-2 & -3 \\
-1 & -2
\end{pmatrix},   \\
&\hspace{8mm}
\begin{pmatrix}
2 & 1 \\
3 & -6
\end{pmatrix}, 
\begin{pmatrix}
6 & 1 \\
3 & 6
\end{pmatrix}, 
\begin{pmatrix}
-6 & 1 \\
3 & 2
\end{pmatrix}, 
\begin{pmatrix}
-2 & 1 \\
3 & -2
\end{pmatrix}, 
\begin{pmatrix}
2 & 5 \\
7 & -6
\end{pmatrix}, 
\begin{pmatrix}
6 & 5 \\
7 & 6
\end{pmatrix}, 
\begin{pmatrix}
-6 & 5 \\
7 & 2
\end{pmatrix}, 
\begin{pmatrix}
-2 & 5 \\
7 & -2
\end{pmatrix}
\bmod{16}. 
\end{align*}
}
\end{lemma}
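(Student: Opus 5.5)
The plan is to reduce the congruence $g(M)\equiv 0 \bmod 4$ directly from the definition of $g$ in each of the four residue classes. Each time we use the constraints coming from the class of $M \bmod 4$ and from $ad-bc=1$. Afterwards we enumerate the residues mod $16$ by hand, just as in the proofs of Lemmas \ref{lem:level3-mod4} and \ref{lem:level4-mod2}.

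\textbf{Case (1)} ($c$ even, $M\equiv\pm I \bmod 4$). Here $b\equiv c\equiv 0 \bmod 4$ and $a\equiv d\equiv\pm1 \bmod 4$. Hence $3c\equiv 0$ and $cd(1-ad)\equiv 0 \bmod 4$. Also $\frac14(a-d)c=(a-d)\cdot\frac c4\equiv 0 \bmod 4$, because $a-d\equiv 0 \bmod 4$. So $g(M)\equiv -(d-1)+\frac14(b-c)d \bmod 4$. Multiplying by the odd number $d$ and using $d^2\equiv1 \bmod 4$, the condition becomes $\frac{b-c}{4}\equiv d(d-1)\equiv 1-d \bmod 4$.

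For the list, note that $bc\equiv 0 \bmod 16$, so $ad\equiv 1 \bmod{16}$. This gives the pairs $(a,d)\equiv(1,1),(5,-3),(-7,-7),(-3,5)$ when $d\equiv1 \bmod 4$, and $(-1,-1),(3,-5),(7,7),(-5,3)$ when $d\equiv-1 \bmod 4$. For each pair we choose $(b,c)\in(4\mathbb Z/16)^2$ with $b-c\equiv 4(1-d) \bmod{16}$, which yields the 32 listed matrices.

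\textbf{Case (2)} ($M\equiv\pm\begin{pmatrix}1&2\\2&1\end{pmatrix} \bmod 4$). Here $b\equiv c\equiv 2 \bmod 4$, so $bc\equiv 4 \bmod 8$ and $ad=1+bc\equiv 5 \bmod 8$. Writing $a=d+4t$ gives $ad\equiv 1+4t \bmod 8$, so $t=\frac{a-d}{4}$ is odd. Therefore $\frac14(a-d)c\equiv 2 \bmod 4$, and $3(-c)\equiv 2 \bmod 4$. Moreover $cd(1-ad)=-bc\cdot cd\equiv 0 \bmod 8$. These combine to $g(M)\equiv -(d-1)+\frac14(b-c)d \bmod 4$. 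As before this gives $\frac{b-c}{4}\equiv 1-d\equiv d-1 \bmod 4$; the last step holds because $d-1$ is even. The enumeration mod $16$ then uses $ad\equiv 1+bc \bmod{16}$.

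\textbf{Cases (3), (4)} ($c$ odd). Here we use the odd-$c$ branch of $g$, and now $a,d$ are even with $a\equiv -d \bmod 4$, so $b\equiv -c \bmod 4$ is odd. The terms $\frac14(a+d)c$ and $\frac14(b+c)d$ are integers. The term $\frac14(b+c)d$ is even, and it is $\equiv 0 \bmod 4$ in case (3), where $d\equiv 0 \bmod 4$. In case (4), $d\equiv 2$ and we must track the parity of $\frac{b+c}4$ via $ad-bc=1$, as in Case (2). The term $cd(1+bc)$ likewise reduces using $bc=ad-1$. We expect every term except $3(c+d-2)$ and $\frac14(a+d)c$ to collapse to a constant. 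The condition should then become $\frac{a+d}{4}\equiv -1 \bmod 4$ after multiplying by $c$ with $c^2\equiv1$.

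As a cross-check, one can instead use $MT$ together with $\nu_G(T)=-i$ and the character property (Proposition \ref{prop:character-level4}). This interchanges the even and odd classes with a shift of $g$ by $3 \bmod 12$, mirroring the proof of Theorem \ref{thm:M:c-even-level4}.

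The main obstacle is the bookkeeping in cases (3) and (4). There several quarter-integer terms interact, and their contributions mod $4$ hinge on hidden parity information forced by the determinant condition. Getting a sign wrong there changes the residue $-1$ to $+1$, so this step needs care. The explicit mod-$16$ lists are then routine but tedious enumerations under $ad-bc\equiv1 \bmod{16}$.
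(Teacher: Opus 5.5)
Your proposal is correct and follows essentially the paper's route. You reduce $g(M)$ modulo $4$ directly from its definition, use $ad-bc=1$ to fix the parity of the relevant quarter-integer ($\frac{a-d}{4}$ odd in case (2)), multiply through by the odd entry, and enumerate residues mod $16$; the paper also writes out only cases (1) and (2). Your cases (3) and (4), which you left as expectations, do close as predicted. The collapsing terms contribute $0$ in case (3) and $2$ in case (4), since there $bc=ad-1\equiv 3 \bmod 8$ forces $\frac{b+c}{4}$ to be odd. So in both cases $g(M)\equiv 3c+2+\frac{a+d}{4}c \bmod 4$, and multiplying by $c$ gives exactly $\frac{a+d}{4}\equiv -1 \bmod 4$.
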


\begin{proof}
We treat cases (1) and (2). The other cases can be proved in the same way. 
\newline
{\bf Case (1).}
From the definition of $g(M),$ it follows that 
\begin{align*}
g(M)\equiv 0 \bmod{4}
\Longleftrightarrow&\,  
3(d-c-1)+\frac14(b-c)d+\frac14(a-d)c+cd(1-ad)  \equiv 0 \bmod{4}  \\
\Longleftrightarrow& \,
3(d-1)+\frac{b-c}{4}\cdot d\equiv 0 \bmod{4} \\
\Longleftrightarrow& \,
3(d^2-d)+\frac{b-c}{4}\cdot d^2 \equiv 0 \bmod{4} \\
\Longleftrightarrow& \,
\frac{b-c}{4}\equiv -d+1  \bmod{4}. 
\end{align*}
Since $ad-bc=1,$ it follows that $ad\equiv 1 \bmod{16},$ 
which impies that 
\begin{equation*}
(a,d)\equiv 
\pm(1,1), \pm(5,-3), \pm(-7,-7), \pm(-3,5) \bmod{16}. 
\end{equation*}
\par
If 
$
M
\equiv 
\begin{pmatrix}
1 & 0 \\
0 & 1
\end{pmatrix}
\bmod{4}, 
$ we have 
$
\displaystyle
b-c \equiv 0 \bmod{16},
$ 
which implies that 
\begin{equation*}
(b,c)\equiv (0,0), (4,4), (-4,-4), (8,8) \bmod{16}.
\end{equation*}
\par
If 
$
M
\equiv 
-
\begin{pmatrix}
1 & 0 \\
0 & 1
\end{pmatrix}
\bmod{4}, 
$ we have 
$
\displaystyle
b-c \equiv 8 \bmod{16},
$ 
which implies that 
\begin{equation*}
(b,c)\equiv (0,8), (4,-4), (8,0), (-4,4) \bmod{16}.
\end{equation*}
\newline
{\bf Case (2).}
From the definition of $g(M),$ it follows that 
\begin{align*}
g(M)\equiv 0 \bmod{4}
\Longleftrightarrow&\,  
3(d-c-1)+\frac14(b-c)d+\frac14(a-d)c+cd(1-ad)  \equiv 0 \bmod{4}  \\
\Longleftrightarrow& \,
-(d+1)+\frac{b-c}{4}\cdot d  +\frac{a-d}{4}\cdot 2  \equiv 0 \bmod{4} \\
\Longleftrightarrow& \,
-(d^2+d)+\frac{b-c}{4}\cdot d^2  +\frac{a-d}{4}\cdot 2d  \equiv 0 \bmod{4} \\
\Longleftrightarrow& \,
-(1+d)+\frac{b-c}{4}+\frac{a-d}{4}\cdot2\equiv 0 \bmod{4}.
\end{align*}
Since $ad-bc=1,$ it follows that $ad-4\equiv 1 \bmod{8},$ 
which implies that 
\begin{equation*}
(a,d)\equiv 
\pm
(1,-3), 
\pm
(-3,1) \bmod{8} 
\Longrightarrow
a-d\equiv 4 \bmod{8}. 
\end{equation*}
Therefore, we find that 
\begin{equation*}
g(M)\equiv 0 \bmod{4}
\Longleftrightarrow
\frac{b-c}{4} \equiv d-1 \bmod{4}. 
\end{equation*}
\par
If 
$
M
\equiv 
\begin{pmatrix}
1 & 2 \\
2 & 1
\end{pmatrix}
\bmod{4}, 
$ we have 
$
b-c\equiv 0 \bmod{16}, 
$
which implies that 
\begin{equation*}
(b,c)\equiv 
\pm(2,2), 
\pm(6,6) \bmod{16}.
\end{equation*}
Since $ad-bc=1,$ we see that 
$ad-4\equiv 1 \bmod{16},$ 
which implies that 
\begin{equation*}
(a,d)\equiv 
(1,5), (5,1), (-7,-3), (-3,-7) \bmod{16}. 
\end{equation*}
\par
If 
$
M
\equiv 
-
\begin{pmatrix}
1 & 2 \\
2 & 1
\end{pmatrix}
\bmod{4}, 
$ we have 
$
b-c\equiv 8 \bmod{16}, 
$
\begin{equation*}
(b,c)\equiv 
\pm(2,-6), 
\pm(6,-2) \bmod{16}.
\end{equation*}
Since $ad-bc=1,$ we see that 
$ad-4\equiv 1 \bmod{16},$ 
which implies that 
\begin{equation*}
(a,d)\equiv 
(-1,-5), (-5,-1), (7,3), (3,7) \bmod{16}. 
\end{equation*}
\end{proof}

\begin{theorem}
\label{thmk=3-level4}
{\it
Suppose that 
$k\equiv \pm 3 \bmod{12}$. 
Then, it follows that 
\begin{align*}
\Ker \nu_{G^{k}}
=
\Big\{
M=
\begin{pmatrix}
a & b \\
c & d
\end{pmatrix}
\in
\Gamma_{\theta,4} \,\Big| \, 
&  &\frac{b-c}{4} \equiv& -d+1  & &\bmod{4}  &  &(b\equiv c \equiv  0\bmod{4})   \\
&  &\frac{b-c}{4} \equiv& \, d-1 & &\bmod{4}  &  &(b\equiv c \equiv  2\bmod{4})   \\
&  &\frac{a+d}{4}\equiv& -1 &  &\bmod{4} & &(b\equiv c\equiv 1 \bmod{2})   
\Big\}.
\end{align*}
Moreover, 
as a coset decomposition of $\Gamma_{\theta,4}$ modulo $\Ker \nu_{G^{k}}$,  
we may choose $S^{n}$ $(n=0,4,8,12),$ where 
$
S=
\begin{pmatrix}
1 & 1 \\
0 & 1
\end{pmatrix}. 
$
}
\end{theorem}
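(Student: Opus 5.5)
The plan follows the pattern of Theorem \ref{thmk=3-level3}. For $k\equiv \pm 3 \bmod{12}$ we have $\nu_{G^k}(M)=\exp\{\frac{\pi i k}{6} g(M)\}=\exp\{\pm\frac{\pi i}{2}g(M)\}$, because $\frac{k}{6}\equiv \pm\frac12 \bmod 2$. Here one should check that $g(M)$ is an integer, since the fractions $\frac14(\cdot)$ are integral by the congruences defining $\Gamma_{\theta,4}$. Hence $M\in\Ker\nu_{G^k}$ if and only if $g(M)\equiv 0 \bmod{4}$. The kernel is therefore the set described by Lemma \ref{lem:level4-mod4}, and the remaining work is to translate that lemma's four cases into the three conditions of the statement.

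The translation goes by residue class mod 4. Case (1), $M\equiv\pm I \bmod 4$, is exactly $b\equiv c\equiv 0 \bmod 4$, and the condition there is $\frac{b-c}{4}\equiv -d+1 \bmod 4$. Case (2), $M\equiv \pm\begin{pmatrix}1&2\\2&1\end{pmatrix}\bmod 4$, is exactly $b\equiv c\equiv 2\bmod 4$, with condition $\frac{b-c}{4}\equiv d-1$. Cases (3) and (4) are exactly the elements of $\Gamma_{\theta,4}$ with $b,c$ odd, and both give $\frac{a+d}{4}\equiv -1 \bmod 4$. The four classes exhaust $\Gamma_{\theta,4}$ by the description at the start of Section \ref{sec:level4}, so the kernel is as stated.

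The main obstacle is that the lemma proves only cases (1) and (2) and says the others are ``the same''. I would verify case (3) directly from the odd-$c$ formula. There $a,d\equiv 0\bmod 4$, so $3(c+d-2)\equiv 3c-6 \bmod 4$. Also $\frac14(b+c)d\equiv 0 \bmod 4$ because $4\mid d$. The term $cd(1+bc)$ is $\equiv 0 \bmod 4$ since $4\mid d$. Finally $\frac14(a+d)c$ carries the content. Multiplying by $c$ and using $c^2\equiv 1 \bmod 8$ turns the condition into $\frac{a+d}{4}\equiv 6c\cdot c-3c^2\cdot\ldots$, which one simplifies to $\frac{a+d}{4}\equiv -1$. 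Case (4) is handled the same way using $a,d\equiv 2 \bmod 4$ and $ad\equiv 1+bc$. I would also reconcile the discrepancy between $-cd(1+cd)$ and $-cd(1+bc)$ in Theorem \ref{thm:M:c-odd-level4}, using Definition $g$ as authoritative.

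For the coset decomposition, note that $S^4\in\Gamma_{\theta,4}$ since $S^4\equiv I\bmod 4$. Because $\nu_{G^k}$ is a character, $\nu_{G^k}(S^{4m})=\nu_{G^k}(S^4)^m$. I would compute $g(S^4)$ from the even-$c$ formula with $a=d=1$, $b=4$, $c=0$, which gives $g=\frac14\cdot 4=1$. So $\nu_{G^k}(S^4)=\pm i$, a primitive fourth root of unity. The image of $\nu_{G^k}$ lies in the fourth roots of unity, so $\Gamma_{\theta,4}/\Ker\nu_{G^k}$ is cyclic of order 4, and $S^n$ for $n=0,4,8,12$ represent the distinct cosets.
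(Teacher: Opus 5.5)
Your proposal is correct and follows the paper's own argument in more detail: $\nu_{G^k}(M)=\exp\{\pm\frac{\pi i}{2}g(M)\}$ reduces membership in the kernel to $g(M)\equiv 0 \bmod 4$, Lemma \ref{lem:level4-mod4} gives the three congruences class by class, and your observation that $g(S^4)=1$ (so $\nu_{G^k}(S^4)=\pm i$ generates the image) is the justification behind the paper's remark that the cosets ``can be obtained by considering the values of $\nu_{G^k}$''. One caution on your optional check of case (4): there $d\equiv 2 \bmod 4$, so $\frac14(b+c)d$ does not vanish mod $4$ as it did in case (3) but contributes $2$, because $ad-bc=1$ forces $b+c\equiv 4 \bmod 8$; only with this, and with the definition's term $-cd(1+bc)$, do you reach $\frac{a+d}{4}\equiv -1 \bmod 4$ (the $-cd(1+cd)$ in Theorem \ref{thm:M:c-odd-level4} is a misprint and would turn the condition into $\frac{a+d}{4}\equiv 1$).
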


\begin{proof}
For each $M\in\Gamma_{\theta,4},$ 
we have 
\begin{equation*}
\nu_{G^k}(M)=
\exp
\left\{
\pm
\frac{\pi i}{2}
\left[
g(M)
\right]
\right\},
\end{equation*}
which proves the theorem. 
\par
The coset decomposition can be obtained by considering the values of $\nu_{G^{k}  }(M).$
\end{proof}

\subsection{The case where $k\equiv \pm 4 \bmod{12}$}

\begin{lemma}
\label{lem:level4-mod3}
{\it
Set 
$
M=
\begin{pmatrix}
a & b \\
c & d
\end{pmatrix}
\in\Gamma_{\theta,4}.
$
\newline
$\mathrm{(1)}$ If 
$
M
\equiv
\pm
\begin{pmatrix}
1 & 0 \\
0 & 1
\end{pmatrix}, 
\pm
\begin{pmatrix}
1 & 2 \\
2 & 1
\end{pmatrix}
\bmod{4},
$ 
we have 
\begin{align*}
g(M)\equiv 0 \bmod{3} 
\Longleftrightarrow
M
\equiv& 
\pm
\begin{pmatrix}
1 & 0 \\
0 & 1
\end{pmatrix}, 
\pm
\begin{pmatrix}
0 & -1 \\
1 & 0
\end{pmatrix}, 
\pm
\begin{pmatrix}
1 & 1 \\
1 & -1
\end{pmatrix},  
\pm
\begin{pmatrix}
1 & -1 \\
-1 & -1
\end{pmatrix}
\bmod{3}.   
\end{align*}
\newline
$\mathrm{(2)}$ If 
$
M
\equiv
\pm
\begin{pmatrix}
0 & -1 \\
1 & 0
\end{pmatrix}, 
\pm
\begin{pmatrix}
2 & -1 \\
1 & 2
\end{pmatrix}
\bmod{4},
$ 
we have 
\begin{align*}
g(M)\equiv 0 \bmod{3} 
\Longleftrightarrow
M
\equiv& 
\pm
\begin{pmatrix}
1 & 0 \\
0 & 1
\end{pmatrix}, 
\pm
\begin{pmatrix}
0 & -1 \\
1 & 0
\end{pmatrix}, 
\pm
\begin{pmatrix}
1 & 1 \\
1 & -1
\end{pmatrix},  
\pm
\begin{pmatrix}
1 & -1 \\
-1 & -1
\end{pmatrix}
\bmod{3}.   
\end{align*}
}
\end{lemma}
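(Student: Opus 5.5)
The plan is to reduce $g(M)$ modulo $3$ to a single integer polynomial in $a,b,c,d$ that is the same in both cases. That polynomial depends only on $M \bmod 3$ and is even under $M\mapsto -M$. The lemma then becomes a finite check over the $12$ classes of $SL(2,\mathbb{Z}/3\mathbb{Z})/\{\pm1\}$.

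\textbf{Step 1 (clearing the quarters).} For $M\in\Gamma_{\theta,4}$ the quantities $\frac14(a+d)c$, $\frac14(b+c)d$, $\frac14(b-c)d$ and $\frac14(a-d)c$ are integers. This follows from the congruence classes of $M$ modulo $4$ listed for $\Gamma_{\theta,4}$: either $b\equiv c$ and $a\equiv d \bmod 4$ with $c$ even, or $b\equiv -c$ and $a\equiv -d\bmod 4$ with $c$ odd. Since $4\equiv 1\bmod 3$, each such quarter is congruent modulo $3$ to the product without the factor $\frac14$. The terms $3(\cdots)$ vanish modulo $3$.

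\textbf{Step 2 (one polynomial for both parities).} I use $ad=1+bc$ in both cases.

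If $c$ is even, then $cd(1-ad)=-bc^2d$, and
\begin{equation*}
g(M)\equiv (b-c)d+(a-d)c-bc^2d\equiv ac+bd+cd-bc^2d \bmod 3 .
\end{equation*}
If $c$ is odd, I use the expression $-cd(1+bc)$ from Theorem \ref{summary-level4} and the definition of $g$. This gives
\begin{equation*}
g(M)\equiv (a+d)c+(b+c)d-cd-bc^2d\equiv ac+bd+cd-bc^2d \bmod 3 .
\end{equation*}
So in every case $g(M)\equiv h(M):=ac+bd+cd-bc^2d \bmod 3$. This explains why the answers in (1) and (2) coincide, and why the mod $4$ data play no further role. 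The polynomial $h$ is a sum of monomials of even total degree, so $h(-M)=h(M)$. Hence $h$ is a function on $PSL(2,\mathbb{Z}/3\mathbb{Z})$.

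\textbf{Step 3 (enumeration).} I run through the $12$ elements of $SL(2,\mathbb{Z}/3\mathbb{Z})$ up to sign and evaluate $h$ on each. Sample values:
\begin{itemize}
\item $I$ and $T$ give $h=0$.
\item $\begin{pmatrix}1&1\\0&1\end{pmatrix}$ gives $h=1$.
\item $\begin{pmatrix}1&1\\1&-1\end{pmatrix}$ gives $1-1-1+1=0$.
\item $\begin{pmatrix}1&-1\\-1&-1\end{pmatrix}$ gives $-1+1+1-1=0$.
\end{itemize}
I expect the remaining eight classes to give nonzero values, which proves the stated equivalence. Note that $M$ modulo $4$ and $M$ modulo $3$ are independent by the Chinese remainder theorem, so every listed residue actually occurs.

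\textbf{Main obstacle.} I expect the main risk to be bookkeeping rather than any idea. First, one must make sure the correct form of the odd-case term is used: the statement of Theorem \ref{thm:M:c-odd-level4} writes $cd(1+cd)$, while its proof, Theorem \ref{summary-level4} and the definition of $g$ all use $cd(1+bc)$, and the reduction in Step 2 relies on the latter. Second, the enumeration over the twelve classes must be carried out carefully.
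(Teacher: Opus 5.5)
Your proposal is correct, and it takes a genuinely different route from the paper's.

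\textbf{How the two routes differ.} The paper treats only case (1). It reduces $g(M)$ modulo $3$ to $(b-c)d+(a-d)c+cd(1-ad)$ and splits on whether $d\equiv 0 \bmod 3$:
\begin{itemize}
\item if $d\equiv 0$, the expression is $ac$, and the determinant forces $a\equiv 0$, giving $\pm T$;
\item if $d\not\equiv 0$, the relation $d^2\equiv 1$ kills $ac(1-d^2)$ and leaves $(b-c)d\equiv 0$, i.e.\ $b\equiv c$.
\end{itemize}
Case (2) is dismissed with ``in the same way''. You instead use $ad=1+bc$ to bring both parity cases to the single polynomial $h=ac+bd+cd-bc^2d$. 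I checked both of your reductions. Your reading of the odd-case term as $cd(1+bc)$ is the right one, since the lemma concerns $g$ as defined.

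\textbf{What each route buys.} Your route genuinely proves case (2). It also explains why the two answers coincide, and makes it transparent that the condition depends only on $\pm M \bmod 3$. The paper's route is shorter for the single case it writes out and needs no enumeration.

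\textbf{What is still missing.} Step 3 is stated as an expectation (``I expect the remaining eight classes to give nonzero values'') rather than carried out. You can close it in two lines by splitting on $c$ instead of listing classes.
\begin{itemize}
\item If $c\equiv 0 \bmod 3$, then $d\not\equiv 0$ and $h\equiv bd$. So $h\equiv 0$ iff $b\equiv 0$, which gives $\pm I$.
\item If $c\not\equiv 0$, then $c^2\equiv 1$ and $h\equiv c(a+d)$. So $h\equiv 0$ iff $a+d\equiv 0$. Normalise $c=1$ (using $h(-M)=h(M)$), so that $d=-a$ and $b=ad-1=-a^2-1$. The three choices $a=0,1,-1$ give exactly $T$, $\begin{pmatrix}1&1\\1&-1\end{pmatrix}$ and $-\begin{pmatrix}1&-1\\-1&-1\end{pmatrix}$.
\end{itemize}
This confirms that the other eight classes of $PSL(2,\mathbb{Z}/3\mathbb{Z})$ give $h\not\equiv 0$.
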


\begin{proof}
We treat case (1). The other case can be proved in the same way. 
From the definition of $g(M),$ it follows that 
\begin{align*}
g(M)\equiv 0 \bmod{3}
\Longleftrightarrow&
3(d-c-1)+\frac14(b-c)d+\frac14(a-d)c+cd(1-ad)  \equiv 0 \bmod{3} \\
\Longleftrightarrow&
(b-c)d+(a-d)c+cd(1-ad) \equiv 0 \bmod{3}.
\end{align*}
\par
If $d\equiv 0\bmod{3},$ 
we see that  
\begin{equation*}
g(M)\equiv 0 \bmod{3} 
\Longleftrightarrow
ac\equiv 0 \bmod{3}.
\end{equation*}
Since $ad-bc=1,$ 
it follows that 
$
c 
\not\equiv 0 \bmod{3}, 
$ 
which implies that $a\equiv 0 \bmod{3}.$ 
Since $ad-bc=1,$ 
we find that 
$bc\equiv -1 \bmod{3},$ 
which implies that 
\begin{equation*}
(a,d)\equiv(0,0) \bmod{3}, 
(b,c)\equiv \pm (-1,1) \bmod{3}
\Longleftrightarrow
M
\equiv 
\pm
\begin{pmatrix}
0 & -1 \\
1 & 0
\end{pmatrix}
\bmod{3}.
\end{equation*}
\par
If $d \not\equiv 0 \bmod{3},$ 
we see that 
\begin{align*}
g(M)\equiv 0 \bmod{3}
\Longleftrightarrow&
(b-c)d+(a-d)c+cd(1-ad) \equiv 0 \bmod{3} \\
\Longleftrightarrow&
(b-c)d
+
ac-cd+cd-acd^2 \equiv 0 \bmod{3}  \\
\Longleftrightarrow&
(b-c)d\equiv 0 \bmod{3} \\
\Longleftrightarrow&
b-c\equiv 0 \bmod{3} \\
\Longleftrightarrow&
(b,c)\equiv (0,0), (1,1), (-1,-1) \bmod{3} \\
\Longleftrightarrow&
M
\equiv 
\pm
\begin{pmatrix}
1 & 0 \\
0 & 1
\end{pmatrix}, 
\pm
\begin{pmatrix}
1 & 1 \\
1 & -1
\end{pmatrix}, 
\pm
\begin{pmatrix}
-1 & -1 \\
-1 & 1
\end{pmatrix} 
\bmod{3}. 
\end{align*}
\end{proof}

\begin{theorem}
\label{thmk=4-level4}
{\it
Suppose that 
$k\equiv \pm 4 \bmod{12}$. 
Then, it follows that 
\begin{align*}
\Ker \nu_{G^{k}}
=&
\Big\{
M
\in
\Gamma_{\theta, 4} \,\Big| \, 
M
\equiv& 
\pm
\begin{pmatrix}
1 & 0 \\
0 & 1
\end{pmatrix}, 
\pm
\begin{pmatrix}
0 & -1 \\
1 & 0
\end{pmatrix}, 
\pm
\begin{pmatrix}
1 & 1 \\
1 & -1
\end{pmatrix},  
\pm
\begin{pmatrix}
1 & -1 \\
-1 & -1
\end{pmatrix}
\bmod{3}
\Big\}.
\end{align*}
Moreover, 
as a coset decomposition of $\Gamma_{\theta,4}$ modulo $\Ker \nu_{G^{k}}$,  
we may choose $S^{n}$ $(n=0,4,8),$ where 
$
S=
\begin{pmatrix}
1 & 1 \\
0 & 1
\end{pmatrix}. 
$
}
\end{theorem}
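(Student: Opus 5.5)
Since $k\equiv \pm 4 \bmod{12}$, I would first write $k=\pm 4+12m$. Then for every $M\in\Gamma_{\theta,4}$ the definition of $g$ and Theorem \ref{summary-level4} give
\begin{equation*}
\nu_{G^{k}}(M)=\exp\left\{\frac{\pi i k}{6}g(M)\right\}=\exp\left\{\pm\frac{2\pi i}{3}g(M)\right\},
\end{equation*}
because $\exp\{2\pi i m\, g(M)\}=1$. To use this reduction I need $g(M)\in\mathbb{Z}$, or at least $12m\,g(M)\in 2\mathbb{Z}$. I would verify this from the congruence conditions defining $\Gamma_{\theta,4}$. When $c$ is even, $b\equiv c$ and $a\equiv d \bmod 4$. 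When $c$ is odd, $b\equiv -c$ and $a\equiv -d \bmod 4$. In both cases the quarter-terms in $g(M)$ are integers. Hence $M\in\Ker\nu_{G^k}$ if and only if $g(M)\equiv 0 \bmod 3$.

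Next I would apply Lemma \ref{lem:level4-mod3}. In each of the two residue classes mod $4$ that make up $\Gamma_{\theta,4}$, it characterizes $g(M)\equiv 0\bmod 3$ by the same list of residues of $M$ mod $3$. Taking the union over the two cases gives exactly the set in the statement. The only care needed is the reduction in the Lemma: replacing $\frac14$ by its inverse mod $3$, which is $1$, and $3(d-c-1)$ by $0$. This is legitimate only because the quarter-terms are integers, which I checked above.

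For the coset decomposition I would use $S=\begin{pmatrix}1&1\\0&1\end{pmatrix}$. Here $S^4=\begin{pmatrix}1&4\\0&1\end{pmatrix}\equiv I \bmod 4$, so $S^{4}\in\Gamma_{\theta,4}$. By the even-$c$ formula, with $a=d=1$, $b=4$, $c=0$, we get $g(S^{4n})=n$, so $\nu_{G^k}(S^{4n})=\exp\{\pm 2\pi i n/3\}$. These values are distinct for $n=0,1,2$, that is, for the exponents $0,4,8$. Since $\nu_{G^k}$ is a character (Proposition \ref{prop:character-level4}), its image is a group of cube roots of unity of order exactly $3$. The kernel therefore has index $3$, and $S^0,S^4,S^8$ lie in distinct cosets and represent all of them.

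The main obstacle is the integrality bookkeeping in the first step. Once $g(M)\in\mathbb{Z}$ is secured, everything else is a direct citation of Lemma \ref{lem:level4-mod3} plus the single evaluation of $g(S^{4n})$.
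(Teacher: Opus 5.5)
Your proposal is correct and follows the same route as the paper. You reduce $\nu_{G^k}(M)$ to $\exp\{\pm\frac{2\pi i}{3}g(M)\}$, apply Lemma \ref{lem:level4-mod3} in both residue classes, and read off the cosets from $\nu_{G^k}(S^{4n})=\exp\{\pm 2\pi i n/3\}$. Your checks that $g(M)\in\mathbb{Z}$ on $\Gamma_{\theta,4}$ and that the kernel has index exactly $3$ fill in what the paper leaves implicit; one small fix is that $S^{4n}$ has $b=4n$, not $b=4$, when you evaluate $g(S^{4n})$.
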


\begin{proof}
For each $M\in\Gamma_{\theta,4},$ 
we have 
\begin{equation*}
\nu_{G^k}(M)
=
\exp
\left\{
\pm
\frac{2\pi i}{3}
\left[
g(M)
\right]
\right\}. 
\end{equation*}
The theorem follows from Lmma \ref{lem:level4-mod3}.  
\par
The coset decomposition can be obtained by considering the values of $\nu_{G^{k}  }(M).$
\end{proof}

\subsection{The case where $k\equiv \pm 2 \bmod{12}$}

\begin{theorem}
\label{thmk=2-level4}
{\it
Suppose that 
$k\equiv \pm2 \bmod{12}$. 
Then, it follows that 
\begin{align*}
\Ker \nu_{G^{k}}
=
\Ker 
\nu_{ G^{6} }
\cap
\Ker \nu_{ G^4 }. 
\end{align*}
Moreover, 
as a coset decomposition of $\Gamma_{\theta,4}$ modulo $\Ker \nu_{G^{k}}$,  
we may choose 
\begin{equation*}
S^{n}  \,\, 
(n=0,4,8,12,16,20),  \,\,
S=
\begin{pmatrix}
1 & 1 \\
0 & 1
\end{pmatrix}. 
\end{equation*}
}
\end{theorem}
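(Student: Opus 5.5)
We would follow exactly the pattern of Theorem \ref{thmk=2-level3}. Write $k=\pm 2+12m$. By Theorem \ref{summary-level4} and the definition of $g$, for every $M\in\Gamma_{\theta,4}$ we have
\begin{equation*}
\nu_{G^{k}}(M)=\exp\left\{\frac{\pi i k}{6}g(M)\right\}=\exp\left\{\pm\frac{\pi i}{3}g(M)\right\}.
\end{equation*}
Here we use that $g(M)$ is an integer: by Theorems \ref{thm:M:c-odd-level4} and \ref{thm:M:c-even-level4}, $\nu_G(M)^{12}=1$. Hence $M\in\Ker\nu_{G^k}$ if and only if $g(M)\equiv 0 \bmod{6}$. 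Since $\gcd(2,3)=1$, this holds if and only if $g(M)\equiv 0 \bmod 2$ and $g(M)\equiv 0 \bmod 3$.

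In the same way, $\nu_{G^6}(M)=\exp\{\pi i\, g(M)\}$, so $\Ker\nu_{G^6}=\{M: g(M)\equiv 0 \bmod 2\}$. This is the content of Theorem \ref{thmk=6-level4} with Lemma \ref{lem:level4-mod2}. Likewise $\nu_{G^4}(M)=\exp\{2\pi i\,g(M)/3\}$, so $\Ker\nu_{G^4}=\{M: g(M)\equiv 0\bmod 3\}$, which is Theorem \ref{thmk=4-level4} with Lemma \ref{lem:level4-mod3}. Intersecting the two sets gives $\Ker\nu_{G^k}=\Ker\nu_{G^6}\cap\Ker\nu_{G^4}$.

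For the coset decomposition, $\nu_{G^k}$ is a homomorphism from $\Gamma_{\theta,4}$ to the group of sixth roots of unity. So $\Gamma_{\theta,4}/\Ker\nu_{G^k}$ is cyclic, and its order equals the size of the image. We first check that $S^4\in\Gamma_{\theta,4}$, since $S^4\equiv I \bmod 4$. We then compute $\nu_{G^k}(S^4)$ from the $c$ even case of Theorem \ref{summary-level4}, with $a=d=1$, $b=4$, $c=0$:
\begin{equation*}
g(S^4)=3(1-0-1)+\tfrac14\cdot 4+0+0=1.
\end{equation*}
Thus $\nu_{G^k}(S^4)=e^{\pm\pi i/3}$, a primitive sixth root of unity. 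Therefore the image is the full group of sixth roots, the index is $6$, and $S^{4j}$ $(j=0,\dots,5)$ lie in distinct cosets because their values $e^{\pm\pi i j/3}$ are distinct. This gives representatives $S^n$ with $n=0,4,8,12,16,20$.

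The only delicate point is the integrality of $g(M)$, so that the reduction modulo $6$ makes sense, together with the evaluation of $g(S^4)$. Both follow directly from the explicit formulas already established. There is no real obstacle beyond bookkeeping.
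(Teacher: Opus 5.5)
Your proof is correct and follows the paper's argument. Both reduce membership in $\Ker\nu_{G^k}$ to $g(M)\equiv 0 \bmod 6$ via $\nu_{G^k}(M)=\exp\{\pm\frac{\pi i}{3}g(M)\}$, then split this into the mod $2$ and mod $3$ conditions. Your explicit evaluation $g(S^4)=1$, so that $\nu_{G^k}(S^4)$ is a primitive sixth root of unity, simply makes precise the paper's remark that the coset representatives come from ``considering the values'' of $\nu_{G^k}$.
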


\begin{proof}
For each 
$
M
=
\begin{pmatrix}
a & b \\
c & d
\end{pmatrix}
\in
\Gamma_{\theta,4}, 
$ 
we have 
\begin{equation*}
\nu_{G^{k}  }(M)=
\exp
\left\{
\pm
\frac{
 \pi i
}
{3} 
f(M) 
\right\},
\end{equation*}
which implies that 
\begin{equation*}
M
\in
\Ker \nu_{G^{k}}
\Longleftrightarrow 
f(M)\equiv 0 \bmod{6}
\Longleftrightarrow 
f(M)\equiv 0 \bmod{2} \,\,
\mathrm{and} \,\,
f(M)\equiv 0 \bmod{3}. 
\end{equation*}
The theorem follows from Theorems \ref{thmk=6-level4} and \ref{thmk=4-level4}.
\par
The coset decomposition can be obtained by considering the values of $\nu_{G^{k}  }(M).$
\end{proof}

\subsection{The case where $k\equiv \pm 1, \pm 5 \bmod{12}$}

\begin{theorem}
\label{thmk=1and5-level4}
{\it
Suppose that 
$k\equiv \pm1, \pm 5 \bmod{12}$. 
Then, it follows that 
\begin{align*}
\Ker \nu_{G^{k}}
=
\Ker 
\nu_{ G^{3} }
\cap
\Ker \nu_{ G^4 }. 
\end{align*}
Moreover, 
as a coset decomposition of $\Gamma_{\theta,4}$ modulo $\Ker \nu_{G^{k}}$,  
we may choose 
\begin{equation*}
S^{n}  \,\, 
(n=0,4,8,12,16,20,24,28,32,36,40,44),  \,\,
S=
\begin{pmatrix}
1 & 1 \\
0 & 1
\end{pmatrix}. 
\end{equation*}
}
\end{theorem}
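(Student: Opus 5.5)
We follow the same pattern as Theorem \ref{thmk=1and5}, the level $3$ analogue. By Theorem \ref{summary-level4} and the definition of $g$, Proposition \ref{prop:character-level4} gives, for every $M\in\Gamma_{\theta,4}$,
\begin{equation*}
\nu_{G^{k}}(M)=\exp\left\{\frac{\pi i k}{6}\, g(M)\right\}.
\end{equation*}
First we check that $g(M)$ is an integer. This holds because the proofs of Theorems \ref{thm:M:c-odd-level4} and \ref{thm:M:c-even-level4} show that $\frac14(a+d)c+\frac14(b+c)d$, respectively $\frac14(b-c)d+\frac14(a-d)c$, lies in $\mathbb{Z}$ on the relevant residue classes modulo $4$.

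Next, since $k\equiv \pm1,\pm5 \bmod{12}$, we have $(k,12)=1$. Hence $\exp\{\pi i k g(M)/6\}=1$ if and only if $k\,g(M)\equiv 0 \bmod{12}$, which holds if and only if $g(M)\equiv 0\bmod{12}$. By the Chinese remainder theorem, this is equivalent to
\begin{equation*}
g(M)\equiv 0 \bmod{4}\quad\text{and}\quad g(M)\equiv 0 \bmod{3}.
\end{equation*}
Apply the same reasoning to the exponents $3$ and $4$. We have $\nu_{G^3}(M)=\exp\{\pi i g(M)/2\}$, so $M\in\Ker\nu_{G^3}$ if and only if $g(M)\equiv0\bmod 4$; this is Theorem \ref{thmk=3-level4}, resting on Lemma \ref{lem:level4-mod4}. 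Likewise $\nu_{G^4}(M)=\exp\{2\pi i g(M)/3\}$, so $M\in\Ker\nu_{G^4}$ if and only if $g(M)\equiv0\bmod3$; this is Theorem \ref{thmk=4-level4} with Lemma \ref{lem:level4-mod3}. Together these give $\Ker\nu_{G^k}=\Ker\nu_{G^3}\cap\Ker\nu_{G^4}$.

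For the coset decomposition, $\nu_{G^k}$ is a character of $\Gamma_{\theta,4}$ with values in the $12$th roots of unity, so $\Gamma_{\theta,4}/\Ker\nu_{G^k}$ embeds into $\mu_{12}$. It therefore suffices to find an element whose character value is a primitive $12$th root of unity. Take $S^4=\begin{pmatrix}1&4\\0&1\end{pmatrix}$. It lies in $\Gamma_{\theta,4}$ and satisfies $S^4\equiv I \bmod 4$, with $c=0$ even. The formula for $c$ even gives
\begin{equation*}
g(S^4)=3(1-0-1)+\tfrac14(4-0)\cdot 1+0+0=1,
\end{equation*}
so $\nu_{G^k}(S^4)=e^{\pi i k/6}$, which is primitive of order $12$. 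Consequently the powers $S^{4m}$ for $m=0,\dots,11$, that is, $S^n$ with $n=0,4,\dots,44$, take pairwise distinct values and represent all cosets, and the index is $12$.

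The main obstacle is the integrality and well-definedness of $g$ modulo $12$ on each residue class. This is what makes the identity $\nu_{G^k}=\exp\{\pi i k g/6\}$ legitimate, as opposed to just formal. Everything after that is elementary, except for the minor typographical issue that the analogous level-$4$ proofs write $f$ where $g$ is meant.
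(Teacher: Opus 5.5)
Your proof is correct and follows the paper's argument: since $(k,12)=1$, the condition $\nu_{G^k}(M)=1$ reduces to $g(M)\equiv 0 \bmod{12}$, which splits into the conditions modulo $4$ and modulo $3$ that characterize $\Ker\nu_{G^3}$ and $\Ker\nu_{G^4}$. Your evaluation $g(S^4)=1$ gives $\nu_{G^k}(S^4)=e^{\pi i k/6}$ of exact order $12$, and this supplies the coset-decomposition step that the paper only asserts ``by considering the values of $\nu_{G^k}(M)$''; integrality of $g$ follows most directly from $a\equiv d$, $b\equiv -c \bmod{4}$.
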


\begin{proof}
For each 
$
M
=
\begin{pmatrix}
a & b \\
c & d
\end{pmatrix}
\in
\Gamma_{\theta,4}, 
$ 
we have 
\begin{equation*}
\nu_{ G^{k}  }(M)=
\exp
\left\{
\pm
\frac{
k
 \pi i
}
{6} 
f(M) 
\right\}, \,\,
(k,6)=1, 
\end{equation*}
which implies that 
\begin{equation*}
M
\in
\Ker \nu_{G^{k}}
\Longleftrightarrow 
g(M)\equiv 0 \bmod{12}
\Longleftrightarrow 
g(M)\equiv 0 \bmod{4} \,\,
\mathrm{and} \,\,
g(M)\equiv 0 \bmod{3}. 
\end{equation*}
The theorem follows from Theorems \ref{thmk=3-level4} and \ref{thmk=4-level4}.
\par
The coset decomposition can be obtained by considering the values of $\nu_{ G^{k}  }(M).$
\end{proof}

\section{Coset decompositions of $\Gamma(1) $ modulo $\Gamma_{\theta,3}$ and $\Gamma_{\theta,4}$  }
\label{sec:coset}

For
$
M
=
\begin{pmatrix}
a & b \\
c & d
\end{pmatrix}
\in
\Gamma(1),
$ 
we define an element $\lambda_N(M)$ of $SL(2, \mathbb{Z}/N \mathbb{Z} )$ by 
\begin{equation*}
\lambda_N(M)
=
\begin{pmatrix}
\bar{a} & \bar{b} \\
\bar{c} & \bar{d}
\end{pmatrix},
\end{equation*}
where 
$\bar{a} \equiv a \bmod{N}, $ 
$\bar{b} \equiv b \bmod{N}, $
$\bar{c} \equiv c \bmod{N}, $
$\bar{d} \equiv d \bmod{N}. $
In addition, 
we define 
\begin{equation*}
\Gamma(N)
=
\left\{
M
=
\begin{pmatrix}
a & b \\
c & d
\end{pmatrix}
\in
\Gamma(1) \, | \,
M\equiv 
 \begin{pmatrix}
1 & 0 \\
0 & 1
\end{pmatrix} 
\bmod{N}
\right\}.
\end{equation*}

\subsection{Coset decomposition of $\Gamma(1) $ modulo $\Gamma_{\theta,3}$ }
Considering $\lambda_3$, 
as a  coset decomposition of $\Gamma(1)$ modulo $\Gamma(3),$ 
we may choose 
\begin{align*}
&
\pm 
\begin{pmatrix}
1 & 0 \\
0 & 1
\end{pmatrix}, 
\pm 
\begin{pmatrix}
1 & 1 \\
0 & 1
\end{pmatrix}, 
\pm 
\begin{pmatrix}
1 & -1 \\
0 & 1
\end{pmatrix}, 
\pm 
\begin{pmatrix}
1 & -1 \\
1 &0
\end{pmatrix}, 
\pm 
\begin{pmatrix}
1 & 0 \\
1 & 1
\end{pmatrix}, 
\pm 
\begin{pmatrix}
0 & -1 \\
1 & 0
\end{pmatrix},   \\
&
\pm 
\begin{pmatrix}
0 & -1 \\
1 & 1
\end{pmatrix}, 
\pm 
\begin{pmatrix}
0 & 1 \\
-1 & 1
\end{pmatrix}, 
\pm 
\begin{pmatrix}
1 & 1 \\
-1 & 0
\end{pmatrix}, 
\pm 
\begin{pmatrix}
1 & 0 \\
-1 & 1
\end{pmatrix}, 
\pm 
\begin{pmatrix}
1 & 1 \\
1 & 2
\end{pmatrix}, 
\pm 
\begin{pmatrix}
1 & -1 \\
-1 & 2
\end{pmatrix},
\end{align*}
which implies that 
\begin{align*}
\Gamma(1)=&
\Gamma_{\theta,3} 
\cup
\Gamma_{\theta,3} 
\begin{pmatrix}
1 & 1 \\
0 & 1
\end{pmatrix} 
\cup
\Gamma_{\theta,3} 
\begin{pmatrix}
1 & 2 \\
0 & 1
\end{pmatrix} 
\cup
\Gamma_{\theta,3} 
\begin{pmatrix}
1 & 0 \\
-1 & 1
\end{pmatrix} 
\cup
\Gamma_{\theta,3} 
\begin{pmatrix}
1 & 1 \\
-1& 0
\end{pmatrix} 
\cup
\Gamma_{\theta,3} 
\begin{pmatrix}
-1 & 1 \\
1 & -2
\end{pmatrix}. 
\end{align*}
The parabolic points of $\Gamma_{\theta,3}$ are given by $\infty$ and $-1.$

\subsection{Coset decomposition of $\Gamma(1) $ modulo $\Gamma_{\theta,4}$ }
Considering $\lambda_4$, 
as a  coset decomposition of $\Gamma(1)$ modulo $\Gamma(4),$ 
we may choose 
\begin{align*}
&
\pm 
\begin{pmatrix}
1 & 0 \\
0 & 1
\end{pmatrix}, 
\pm 
\begin{pmatrix}
1 & 0 \\
2 & 1
\end{pmatrix}, 
\pm 
\begin{pmatrix}
1 & 2 \\
0 & 1
\end{pmatrix}, 
\pm 
\begin{pmatrix}
1 & 2 \\
2&5
\end{pmatrix}, 
\pm 
\begin{pmatrix}
0 & -1 \\
1 & 0
\end{pmatrix}, 
\pm 
\begin{pmatrix}
0 & -1 \\
1 & 2
\end{pmatrix},   \\
&
\pm 
\begin{pmatrix}
2 & -1 \\
1 & 0
\end{pmatrix}, 
\pm 
\begin{pmatrix}
2 & 3 \\
1 & 2
\end{pmatrix}, 
\pm 
\begin{pmatrix}
1 & 1 \\
0 & 1
\end{pmatrix}, 
\pm 
\begin{pmatrix}
1 & -1 \\
0 & 1
\end{pmatrix}, 
\pm 
\begin{pmatrix}
1 & 1 \\
2 & 3
\end{pmatrix}, 
\pm 
\begin{pmatrix}
1 & -1 \\
2 & -1
\end{pmatrix},                \\
&
\pm 
\begin{pmatrix}
2 & 1 \\
1 & 1
\end{pmatrix}, 
\pm 
\begin{pmatrix}
-2 & 1 \\
1 & -1
\end{pmatrix}, 
\pm 
\begin{pmatrix}
0 & -1 \\
1 & -1
\end{pmatrix}, 
\pm 
\begin{pmatrix}
0 & -1 \\
1 & 1
\end{pmatrix}, 
\pm 
\begin{pmatrix}
1 & 1 \\
1 & 2
\end{pmatrix}, 
\pm 
\begin{pmatrix}
-1 & 1 \\
1 & -2
\end{pmatrix},    \\
&
\pm 
\begin{pmatrix}
-1 & -1 \\
1 & 0
\end{pmatrix}, 
\pm 
\begin{pmatrix}
1 & -1 \\
1 & 0
\end{pmatrix}, 
\pm 
\begin{pmatrix}
1 & 0 \\
1 & 1
\end{pmatrix}, 
\pm 
\begin{pmatrix}
-1 & 0 \\
1 & -1
\end{pmatrix}, 
\pm 
\begin{pmatrix}
1 & -2 \\
1 & -1
\end{pmatrix}, 
\pm 
\begin{pmatrix}
-1 & -2 \\
1 & 1
\end{pmatrix},
\end{align*}
which implies that 
\begin{align*}
\Gamma(1)=&
\Gamma_{\theta,4} 
\cup
\Gamma_{\theta,4} 
\begin{pmatrix}
1 & 1 \\
0 & 1
\end{pmatrix} 
\cup
\Gamma_{\theta,4} 
\begin{pmatrix}
1 & 2 \\
0 & 1
\end{pmatrix} 
\cup
\Gamma_{\theta,4} 
\begin{pmatrix}
1 & -1 \\
0 & 1
\end{pmatrix} 
\cup
\Gamma_{\theta,4} 
\begin{pmatrix}
-1 & 0 \\
1& -1
\end{pmatrix} 
\cup
\Gamma_{\theta,4} 
\begin{pmatrix}
-1 & -1 \\
1 & 0
\end{pmatrix}. 
\end{align*}
The parabolic points of $\Gamma_{\theta,4}$ are given by $\infty$ and $-1.$




\end{document}